\def\Z{\mathbb{Z}}
\def\R{\mathbb{R}}
\def\N{\mathbb{N}}
\def\epsilon{\varepsilon}
\def\tilde{\widetilde}
\newcommand{\SE}{\setcounter{equation}{0} \section}
\newcommand{\be}{\begin{equation}}
\newcommand{\ee}{\end{equation}}
\newcommand{\baa}{\begin{array}}
\newcommand{\eaa}{\end{array}}
\newcommand{\ba}{\begin{eqnarray}}
\newcommand{\ea}{\end{eqnarray}}
\numberwithin{equation}{section}
\newtheorem{theorem}{\bf Theorem}[section]
\newtheorem{lemma}[theorem]{\bf Lemma}
\newtheorem{prop}[theorem]{\bf Proposition}
\newtheorem{corollary}[theorem]{\bf Corollary}
\newtheorem{remark}[theorem]{\bf Remark}
\begin{document}
\date{}
\title[Dynamics of time-periodic reaction-diffusion equations]{Dynamics of time-periodic reaction-diffusion equations with compact initial support on $\R$} 

\thanks{This work was supported by the Japan Society for the Promotion of Science (16H02151, 17F17021).
Part of this work was carried out during the authors worked in the Graduate School of Mathematical Sciences,  University of Tokyo, whose support 
is thankfully acknowledged.}

\author[W. Ding and H. Matano]{Weiwei Ding$^\dag$ and Hiroshi Matano$^\ddag$}
\thanks{
$^\dag$ Meiji Institute for Advanced Study of Mathematical Sciences, Meiji University, Tokyo 164-8525, Japan (e-mail: dingww@mail.ustc.edu.cn)}

\thanks{
$^\ddag$ Meiji Institute for Advanced Study of Mathematical Sciences, Meiji University, Tokyo 164-8525, Japan (e-mail: matano@meiji.ac.jp)}

\keywords{Time-periodicity, Reaction-diffusion equations, Omega limit set, Asymptotic behavior, Convergence}

\begin{abstract}
This paper is concerned with the asymptotic behavior of bounded solutions of the Cauchy problem
\begin{equation*}
\left\{\baa{ll}
\smallskip u_t=u_{xx} +f(t,u), & x\in\R,\,t>0, \vspace{3pt}\\
u(x,0)= u_0, & x\in\R,\eaa\right.
\end{equation*}
where $u_0$ is a nonnegative bounded function with compact support and $f$ is a rather general nonlinearity that is periodic in $t$ and satisfies $f(\cdot,0)=0$. In the autonomous case where $f=f(u)$, the convergence of every bounded solution to an equilibrium has been established by Du and Matano (2010). However, the presence of periodic forcing makes the problem significantly more difficult, partly because the structure of time periodic solutions is much less understood than that of steady states. In this paper, we first prove that any $\omega$-limit solution is either spatially constant or symmetrically decreasing, even if the initial data is not symmetric. Furthermore, we show that the set of $\omega$-limit solutions either consists of a single time-periodic solution or it consists of multiple time-periodic solutions and heteroclinic connections among them. Next, under a mild non-degenerate assumption on the corresponding ODE, we prove that the $\omega$-limit set is a singleton, which implies the solution converges to a time-periodic solution. Lastly, we apply these results to equations with bistable nonlinearity and combustion nonlinearity, and specify more precisely which time-periodic solutions can possibly be selected as the limit. 
\end{abstract}

\maketitle


\SE{Introduction and main results}
In this paper, we study the long-time behavior of nonnegative bounded solutions of 
the Cauchy problem
\begin{subequations}\label{E}
\begin{eqnarray}
&u_t=u_{xx}+f(t,u), \ \ \ &x\in\R,\,\,t>0,
\label{equation}\\[4pt]
&u(x,t)=u_0(x),\ \ \ &x\in\R,
\label{initial}
\end{eqnarray}
\end{subequations}
where the initial data $u_0\in L^{\infty}(\R)$ is nonnegative. The nonlinearity $f:\R\times [0,\infty)\to\R$
is locally H{\"o}lder continuous in $\R\times[0,\infty)$, and it is of class $C^1$ with respect to $u$. 
We also assume that
\begin{equation}\label{0state}
f(t,0)=0\,\,\hbox{ for all } t\in\R,
\end{equation}
and that $f$ is $T$-periodic in $t$ for some $T>0$, that is, 
\begin{equation}\label{tperiod}
f(t+T,u)=f(t,u) \,\,\hbox{ for all } t\in\R,\,u\geq 0.
\end{equation}

It is well-known that, for each bounded nonnegative $u_0$, \eqref{E} admits a unique local-in-time solution $u(x,t)$ satisfying 
$$\sup_{t\in (0,\delta]} \|u(\cdot,t) \|_{L^{\infty}(\R)}<\infty,\qquad \lim_{t\to 0} u(x,t)=u_0(x) \, \hbox{ for a.e. } x\in\R,  $$ 
for some $\delta>0$. Moreover, the solution $u(x,t)$ is smooth in $t>0$, and it is defined as long as it stays finite. 
Denote by $T(u_0)$ the maximum time-interval for the existence of $u(x,t)$. Clearly, if $u(x,t)$ is bounded on $\R\times [0,T(u_0))$, then necessarily $T(u_0)=\infty$. We are interested in the asymptotic behavior of bounded solutions as $t\to \infty$.   
 
\vskip 5pt

Before going further, let us give a brief overview of some known results on the asymptotic behavior of bounded solutions of \eqref{E}. For equation \eqref{E} on a finite interval under various types of boundary conditions (Dirichlet, Neumann, Robin, Periodic), it is known that every bounded solution converges to an equilibrium in the autonomous case (i.e. $f=f(u)$ or even $f=f(x,u)$), see \cite{c1,hr,m,z68};  it converges to a time-periodic solution in the time-periodic case (i.e. $f=f(t,u)$ with \eqref{tperiod}), see \cite{bps,cm}. These results have been extended to higher dimensions under certain symmetry assumptions, see \cite{cp,hap,hp} and references thererin.   
 
For problem \eqref{E} on the entire real line $\R$, the situation is more complicated, even for the autonomous equation  
\begin{equation}\label{homo-equation}
u_t=u_{xx}+f(u),\,\,x\in\R,\,t>0.
\end{equation} 
Indeed, there are examples of bounded solutions of \eqref{homo-equation} that oscillate between two equilibria forever; see \cite{er,po2,po3}. The presence of such phenomena makes sharp contrast between the problems on a finite interval and those on the entire line $\R$.
However, if the initial function $u_0$ is nonnegative and compactly supported, one can prove that such oscillation does not occur, and therefore that any bounded solution converges to an equilibrium in the topology of $L_{loc}^{\infty}(\R)$. Moreover, the limit equilibrium is either spatially constant (hence it is a zero of $f$) or it is symmetrically decreasing to a zero of $f$ as $|x|\to\infty$. The most general results in this direction was established in \cite{dm}, where $f$ is only assumed to be locally Lipschitz continuous and satisfy $f(0)=0$. Let us also mention an earlier work \cite{z1}, in which similar results are obtained for special classes of $f$ and $u_0$.
Incidentally, the paper \cite{dm,z1} also discuss the sharpness of transition between propagation and extinction when $f$ is of the bistable or the combustion type nonlinearity; see also Remark \ref{sharp-transition} below.
It should be pointed out that the compactness of the support of the initial data is an essential assumption in \cite{dm,z1}. The above convergence result was extended in \cite{dp} to higher dimensions under some additional non-degeneracy assumptions on $f$. On the other hand, the problem becomes much harder if one simply assumes that initial data $u_0$ satisfies $u_0(x)\to 0$ (as $|x|\to\infty$) instead of being compactly supported. This case was studied in \cite{mp}, in which a general quasiconvergence result (the $\omega$-limit set consists of equilibria) was established.

In this paper, we focus our attention on problem \eqref{E} with time-periodic nonlinearity $f$, and assume that the 
initial function $u_0$ is nonnegative and compactly supported. Based on the aforementioned convergence results, it may be reasonable to expect that every bounded solution of \eqref{E} converges to a time-periodic solution of \eqref{equation} in $L_{loc}^{\infty}(\R)$. However, the presence of periodic forcing makes the problem significantly more difficult than in the autonomous case. Indeed, the proof given in \cite{dm} relies for a large part on the zero-number arguments (also known as intersection comparison principle) and on the classification of all steady states of \eqref{homo-equation}.  While the zero-number arguments is still a powerful tool in our time-periodic problem, a complete understanding of the structure of all time-periodic solutions of \eqref{equation} may be beyond reach. Besides, the method of energy functional used in many autonomous cases (see e.g., \cite{fa,fe}) to conclude the convergence does not apply to our non-autonomous problem. 
 
To our knowledge, in this time-periodic case, only few partial results on the asymptotic behavior of bounded solutions have been known in the literature. Those results are given either for a particular type of nonlinearities or under a strong assumption on the behavior of solutions at $|x|=\infty$. To be more precise, it was proved in \cite{fp} that every bounded solution $u(x,t)$ converges in the topology of $C_0(\R)$ (the space of continuous functions on $\R$, not necessarily nonnegative, converging to $0$ as $|x|\to\infty$), under the following assumptions, in addition to \eqref{0state} and \eqref{tperiod}: 
 \begin{equation}\label{deri-negative}
 \partial_u f(t,0)<0 \,\,\hbox{ for } t\in\R,
 \end{equation}
 and 
 \begin{equation}\label{implicit-condition}
 \big\{ u(\cdot,t):t\in [0,\infty)  \big\}  \hbox{ is relatively compact in } C_0(\R).
 \end{equation}
The proof in \cite{fp} uses the zero-number arguments and the properties of invariant manifolds in a neighborhood of the equilibria. However, the restriction \eqref{implicit-condition} rules out the cases where the solution converges to an equilibrium not decaying to zero as $|x|\to\infty$, which is a situation commonly observed in many applications. In a more recent paper \cite{po1}, Pol{\'a}{\v c}ik considered  
the case where $f(t,u)$ can be bounded from above and below by autonomous bistable nonlinearities, but allowing $f$ to be even non-periodically dependent on $t$, and established a result on the sharp transition between propagation and extinction as an extension of the results in \cite{dm,z1} for the autonomous bistable case $f=f(u)$. When one applies this result to the time-periodic case, it implies that every bounded solution with nonnegative and compactly supported initial function converges in $L_{loc}^{\infty}(\R)$ to a time-periodic solution.

\vskip 5pt

In the present paper, for general time-periodic nonlinearity $f$, we first give a precise description of the asymptotic behavior of nonnegative bounded solutions. 
In order to formulate our main results, let us introduce some notations. Throughout this paper, we always use $u(x,t)$ to denote the solution of \eqref{E} and assume that it is bounded. 
Its $\omega$-limit set is denoted by 
\begin{equation*}
\omega(u):=\Big\{\phi :\, u(x,k_j T) \to \phi(x) \hbox{ for some sequence of positive intergers }  k_j\to\infty \Big\},
\end{equation*}
with the convergence understood in the topology of $L^\infty_{loc}(\R)$.  By parabolic estimates, this convergence also takes place in the $C^{2}_{loc}(\R)$ topology. Elements of $\omega(u)$ are called {\bf 
$\omega$-limit points} of $u$.  For each 
$\phi\in\omega(u)$, there exists an entire solution 
$U(x,t)$ of the equation 
\begin{equation}\label{ground}
U_t=U_{xx} +f(t,U) \,\,\hbox{ for } x\in\R,\,t\in\R
\end{equation}
that satisfies $U(x,0)=\phi(x)$. We call such $U$ an 
{\bf $\omega$-limit solution}  through $\phi$, and denote the set of all $\omega$-limit solutions by $\tilde{\omega}(u)$. It is clear that 
\begin{equation*}
\omega(u)= \Big\{U(\cdot,mT): m\in\Z,\, U\in \tilde{\omega}(u) \Big\}.
\end{equation*}

A solution $U(x,t)$ of \eqref{ground} is said to be {\bf symmetrically decreasing} with respect to some $x_0\in\R$ if $U(2x_0-x,t)=U(x,t)$ for all $x\in\R$, $t\in\R$, and $\partial _x U (x,t)<0$ for all $x>x_0$, $t\in\R$. Since we only consider nonnegative solutions of \eqref{ground}, such a symmetrically decreasing solution $U(x,t)$ has a limit 
$$p(t):=\lim_{|x|\to\infty} U(x,t)\,\,\hbox{ for } t\in\R.   $$
It is clear that $p(t)$ is a solution of 
\begin{equation}\label{ODE}
p_t=f(t,p)\, \,\hbox{ for } t\in\R.
\end{equation}
In this paper, we call $p(t)$ {\bf the base} of $U(x,t)$. It is also easily seen that if $U(x,t)$ is $T$-periodic in $t$, then $p(t)$ is $T$-periodic.

\vskip 5pt

Our first main result is stated as follows. 

\begin{theorem}\label{mainconv}
Assume that $u_0\in L^{\infty}(\R)$ is a nonnegative function with compact support. 
Let $f$ satisfy \eqref{0state}, \eqref{tperiod} and assume further that
\begin{equation}\label{assume-lipschitz}
\partial_u f(t,u) \hbox{ is locally Lipschitz continuous in $u$ uniformly for $t$}.
\end{equation}
 If the solution $u(x,t)$ of \eqref{E} is bounded, then there exists $x_0\in\R$ such that every element of $\tilde{\omega}(u)$ is either spatially constant or symmetrically decreasing with respect to $x_0$.
Moreover, one of the following holds:
\begin{itemize}
\item[{\rm (i)}] $\tilde{\omega}(u)$ consists of only one element, 
and it is either a $T$-periodic solution of \eqref{ODE} or 
a $T$-periodic symmetrically decreasing solution of \eqref{ground};
\item[{\rm (ii)}] $\tilde{\omega}(u)$ contains more than one element, and all of them 
are symmetrically decreasing with respect to $x_0$ and they have the same base. In addition, each $v\in \tilde{\omega}$ is either a $T$-periodic solution of \eqref{ground} or a heteroclinic solution connecting
two $T$-periodic solutions $V_{\pm}(x,t)$, i.e., for any $t\in\R$, there holds
\begin{equation}\label{heteroclinic}
v(x,t+mT)-V_{\pm}(x,t)\to 0 \,\,\,\hbox{ as } \,\,m\to\pm \infty \hbox{ in }  L^{\infty}(\R). 
\end{equation}
\end{itemize}
\end{theorem}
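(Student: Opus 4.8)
The plan is to combine the zero-number (intersection comparison) machinery with a reflection/moving-plane argument adapted to the time-periodic setting, then to analyze the structure of $\tilde\omega(u)$ as an invariant, connected, chain-recurrent-type set for the time-$T$ map. First I would establish spatial symmetry of all $\omega$-limit solutions. Since $u_0$ has compact support, $u(\cdot,t)$ has compact support initially and, more importantly, by the zero-number argument applied to $u(x,t)$ and its reflection $u(2c-x,t)$ about a well-chosen moving plane $x=c$, the number of sign changes of $u(\cdot,t)-u(2c-\cdot,t)$ is finite and nonincreasing in $t$, eventually constant. As in \cite{dm}, tracking these intersections forces the existence of a single $x_0$ such that every $\phi\in\omega(u)$ satisfies $\phi(x)=\phi(2x_0-x)$; the limit is then either spatially constant or, using $\partial_x\phi$ and the strong maximum principle together with nonnegativity, symmetrically decreasing with respect to $x_0$. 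The key point that the \emph{same} $x_0$ works for all of $\omega(u)$ comes from the eventual constancy of the zero number.

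Next I would study $\tilde\omega(u)$ as a dynamical object. It is compact (in $C^2_{loc}$), connected, and invariant under the time-$T$ shift; moreover each $U\in\tilde\omega(u)$ is an entire solution. For each such $U$ I would consider its own $\alpha$- and $\omega$-limit sets (in $t\to\pm\infty$), which by standard arguments are again $T$-periodic solutions. The base $p(t)=\lim_{|x|\to\infty}U(x,t)$ of a symmetrically decreasing $U$ satisfies the ODE \eqref{ODE}; since the convergence $U(x,t)\to p(t)$ is uniform enough (the spatial profiles are monotone with compact-support-type decay inherited along the orbit), $p$ is determined by the orbit, and using a further zero-number argument between two elements of $\tilde\omega(u)$ with \emph{different} bases one derives a contradiction — hence in case (ii) all elements share one base $p(t)$, which is then a $T$-periodic solution of \eqref{ODE}. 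To get the dichotomy (i)/(ii): if $\tilde\omega(u)$ is not a single periodic orbit, pick $v\in\tilde\omega(u)$ that is not itself $T$-periodic; its $\alpha$- and $\omega$-limits are $T$-periodic symmetrically decreasing solutions $V_\pm$ with the same base, and the convergence \eqref{heteroclinic} in $L^\infty(\R)$ (rather than merely $L^\infty_{loc}$) is upgraded using uniform control of the tails via the common base $p(t)$ and monotonicity in $x$.

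The main obstacle I expect is precisely this last upgrade — from $L^\infty_{loc}$ to $L^\infty(\R)$ convergence in \eqref{heteroclinic} — together with ruling out that two distinct elements of $\tilde\omega(u)$ have distinct bases. In the autonomous case one has an energy functional and a complete classification of steady states; here neither is available, so the argument must be driven purely by zero-number estimates plus a careful comparison near $|x|=\infty$. Concretely, the difficulty is that a priori two $\omega$-limit solutions could differ by a symmetrically decreasing profile sitting on two different ODE solutions $p_1(t)\ne p_2(t)$; to exclude this I would compare $U_1(x,t)$ with a spatial translate of $U_2$ and with the constant-in-$x$ solutions $p_i(t)$, using that the zero number of $U_1(\cdot,t)-p_i(t)$ is finite, and show any such configuration produces an extra, non-decreasing intersection count contradicting the eventual constancy established in the first step. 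The symmetry step itself, while technically involved, is essentially the time-periodic analogue of \cite{dm} and I would expect it to go through with $T$-periodic modifications; it is the rigidity of the \emph{base} and the global (not local) nature of the heteroclinic convergence that will require the genuinely new work.
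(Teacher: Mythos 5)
Your overall architecture matches the paper's: symmetry via reflection and zero-number (the paper's Lemma \ref{symmprop}, following \cite{dm}), the trichotomy constant/decreasing (Lemma \ref{conorsym}), and the classification of each $\omega$-limit solution as periodic or a heteroclinic between two periodic ones (Lemma \ref{omega-sol}). You also correctly identify the two hard points: the common base and the upgrade of \eqref{heteroclinic} from $L^\infty_{loc}$ to $L^\infty$. However, there is a genuine gap at the first of these. You propose to rule out two elements with distinct bases ``purely by zero-number estimates plus a careful comparison near $|x|=\infty$,'' claiming such a configuration ``produces an extra, non-decreasing intersection count.'' This does not work: by the paper's Lemma \ref{periodic-number}, two $T$-periodic $\omega$-limit solutions intersect in exactly $0$ or $2$ sign changes, and both configurations are perfectly consistent with distinct bases (e.g.\ a symmetrically decreasing $V$ whose maximum lies entirely below the base $p_\varphi$ of the top solution $v_\varphi$ gives zero sign changes and no contradiction from intersection counting). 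The paper's actual mechanism is Proposition \ref{nonexistence-result} and Corollary \ref{coro-intermediate}: a symmetrically decreasing $T$-periodic solution based at $p$ forces $p\notin\mathcal{Y}_{per}$, hence the existence of another ODE-periodic solution $q$ strictly between $p_\varphi$ and $\max_x v_\varphi$. This is proved by the change of variables $U(x,t)=h(t;v(x,t))$, reduction to $v_t=v_{xx}+(h_{aa}/h_a)v_x^2$, and a Hopf--Cole-type comparison with the heat equation --- a tool entirely outside the zero-number toolbox. The intermediate $q$ is then combined with the connectedness of $\tilde\omega(u)$ and Step~1 of Proposition \ref{hereconnect} to derive the contradiction in the cases your sliding argument cannot reach (the paper's case (b) in Step~2 of Proposition \ref{prop:nonflat}, and the exclusion of spatially homogeneous periodic elements in Step~1). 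Your proposal contains no substitute for this ingredient.

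A second, smaller omission: you do not treat the case where the element of $\omega(u)$ maximizing the value at $x_0$ is spatially constant. Showing that $\omega(u)$ is then a singleton (the paper's Proposition \ref{prop:flat}, which yields Corollary \ref{coro-homo}) requires a separate comparison argument exploiting the compact support of $u_0$ to bootstrap $u(x,t+m_2T)>u(x,t)$ globally in $x$ and force monotone convergence to the constant; this does not follow from symmetry and intersection counting. By contrast, your sketch of the $L^\infty$ upgrade in \eqref{heteroclinic} via a uniform tail bound is in the right spirit --- the paper implements it through the global supersolution $v_\varphi(|x|-L_0+x_0,t)$ of Lemma \ref{shift-supersolu} --- but even there the argument that the tail limit $p_w$ of a heteroclinic equals $p_\varphi$ again leans on Step~2, i.e.\ on the common-base result you have not established.
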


The above theorem implies that $\tilde{\omega}(u)$ either consists in precisely one periodic solution or it consists of multiple symmetrically decreasing periodic solutions and heteroclinic connections among them. 
It also immediately implies the following corollary.  

\begin{corollary}\label{coro-homo}
Suppose all the assumptions in Theorem \ref{mainconv} are satisfied. If $\omega(u)$ contains a spatially homogeneous function, then it is the only element of $\omega(u)$.
\end{corollary}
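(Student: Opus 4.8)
The plan is to read off the corollary from the dichotomy already established in Theorem~\ref{mainconv}, so that almost no new work is needed. The one elementary observation to isolate first is that a symmetrically decreasing solution is never spatially constant: by definition $\partial_x U(x,t)<0$ for all $x>x_0$, so $U(\cdot,t)$ is strictly monotone on $(x_0,\infty)$ for every $t$. In particular, if $U\in\tilde\omega(u)$ is symmetrically decreasing with respect to $x_0$, then for each $m\in\Z$ the function $U(\cdot,mT)$ is symmetric about $x_0$ and strictly decreasing on $(x_0,\infty)$, hence not spatially homogeneous. Combining this with the identity $\omega(u)=\{U(\cdot,mT):m\in\Z,\ U\in\tilde\omega(u)\}$ recorded above, we see that $\omega(u)$ can contain a spatially homogeneous function only if some element of $\tilde\omega(u)$ is spatially constant.

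Next I would eliminate alternative~(ii) of Theorem~\ref{mainconv}. Under that alternative every element of $\tilde\omega(u)$, whether a $T$-periodic solution of \eqref{ground} or a heteroclinic solution between two such, is symmetrically decreasing with respect to $x_0$ and hence non-constant; by the previous paragraph no element of $\omega(u)$ is then spatially homogeneous, contradicting the hypothesis of the corollary. Thus alternative~(i) must hold: $\tilde\omega(u)=\{U\}$ for a single $U$ that is $T$-periodic in $t$.

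Finally, from $\tilde\omega(u)=\{U\}$ with $U$ being $T$-periodic one gets $\omega(u)=\{U(\cdot,mT):m\in\Z\}=\{U(\cdot,0)\}$, a singleton, which is the assertion. (Furthermore, the hypothesis that this singleton contain a spatially homogeneous element forces $U$ to be the spatially constant $T$-periodic solution of \eqref{ODE} rather than the symmetrically decreasing one.) I do not expect any genuine obstacle here: the substantive content sits entirely in Theorem~\ref{mainconv}, and the corollary is a bookkeeping consequence of its statement together with the trivial fact that symmetric monotonicity precludes spatial constancy.
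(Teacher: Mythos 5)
Your argument is correct and is exactly the route the paper intends: the corollary is stated as an immediate consequence of Theorem~\ref{mainconv}, and your write-up simply makes explicit the two observations needed (a symmetrically decreasing solution has no spatially constant time-slice, so a homogeneous element of $\omega(u)$ forces the spatially constant alternative and rules out case~(ii), leaving the singleton of case~(i)). No gap; this matches the paper's (unwritten) proof.
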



\begin{remark}\label{remark-convergence}{\rm
\begin{itemize}
\item[(a)]
It should be pointed out that Theorem \ref{mainconv} is proved under very general assumptions on $f$ (only \eqref{0state}, \eqref{tperiod} and \eqref{assume-lipschitz}). As already mentioned above, for the autonomous equation \eqref{homo-equation} studied in \cite{dm}, the case (ii) of Theorem \ref{mainconv} does not occur. We suspect that the same is true for the time-periodic problem \eqref{E}, but at the moment, we are only able to exclude the case (ii) under an additional mild non-degeneracy condition on $f$, as stated in Theorem \ref{theorem-non-degenerate} below or for particular types of nonlinearities such as the combustion type, as shown in Proposition \ref{combustion-theorem} below.


\item[(b)] The assumption \eqref{assume-lipschitz} is assumed just for technical reasons and could possibly be weakened. This assumption is only used to show some estimates of the solutions of \eqref{ODE} with respect to initial values (see Lemma \ref{derivative-estimates} and also Step 3 of the proof of Proposition \ref{nonexistence-result}).
\end{itemize}}
\end{remark}

Our second theorem is concerned with the convergence result under a mild non-degenerate assumption on nonnegative $T$-periodic solutions of \eqref{ODE}. Let us introduce a few more notations here. For each $a\geq 0$, we let $h(t;a)$ denote the solution of
\begin{equation}\label{odeinitial}
h_t=f(t,h) \,\,\hbox{ for } t>0;\quad h(0)=a. 
\end{equation}
Then, depending on the sign of $h(T;a)-a$, the behaviors of $h(t;a)$ are classified as follows, as one easily sees from the comparison principle:
\begin{itemize}
\item $h(t;a)$ is {\bf $T$-periodic}: $h(t+T;a)= h(t;a)$ for $t\geq 0$;
\item $h(t;a)$ is {\bf $T$-monotone increasing}: $h(t+T;a)> h(t;a)$ for $t\geq 0$;
\item $h(t;a)$ is {\bf $T$-monotone decreasing}: $h(t+T;a)< h(t;a)$ for $t\geq 0$.
\end{itemize}
We say $h(t;a)$ is {\bf $T$-monotone nondecreasing} (resp. {\bf $T$-monotone nonincreasing}) if it is either $T$-periodic or $T$-monotone increasing (resp. $T$-monotone decreasing).
Let $\mathcal{X}_{per}$ denote the set of all nonnegative $T$-periodic solutions of \eqref{ODE}, and let
$\mathcal{Y}_{per}$ denote
\begin{equation*}
\left.\baa{ll} \mathcal{Y}_{per}: = \Big\{p\in \mathcal{X}_{per}:\,  \hbox{there is }  \epsilon>0 \hbox{ such that }   \!\!\!\! &\hbox{for every } a\in (p(0), p(0)+\epsilon),  \vspace{3pt}\\
\!\!& h(t;a)  \hbox{ is $T$-monotone nondecreasing}  \Big\}.\eaa\right.
\end{equation*}

We assume that:
\begin{itemize}
{\it 
\item[{\bf (H)}]  
Each element $p\in \mathcal{X}_{per}$ either belongs to $\mathcal{Y}_{per}$ or is linearly stable.  }
\end{itemize}

In order to clarify what the assumption (H) signifies, let us first recall some basic notions on stability. An element $p\in \mathcal{X}_{per}$ is said to be {\bf stable from above} (resp. {\bf below}) with respect to \eqref{odeinitial} if it is stable under nonnegative (resp. nonpositive) perturbations of the initial data around $a=p(0)$. Otherwise $p$ is called {\bf unstable from above} (resp. {\bf below}).  It is easily checked that: if $p\in \mathcal{X}_{per}$ is stable from above, then either there exists a sequence of $T$-periodic solutions of \eqref{odeinitial} converging to $p$ from above or there exists $\epsilon>0$ such that $h(t+mT;a)$ is $T$-monotone decreasing and converges to $p$ as $m\to\infty$ for every $a\in (p(0), p(0)+\epsilon)$; $p$ is unstable from above if and only if there exists $\epsilon>0$ such that $h(t+mT;a)$ is $T$-monotone increasing for every $a\in (p(0), p(0)+\epsilon)$. Moreover, we say that $p\in \mathcal{X}_{per}$ is {\bf linearly stable} (resp. {\bf linearly unstable}) if 
$$\int_{0}^{T} \partial_u f(t,p(t))dt < 0  \,\, (\hbox{resp.} >0).  $$ 
It is also easily seen that if $p$ is linearly stable, then there exists $\epsilon>0$ such that $h(t+mT;a)$ is $T$-monotone decreasing for every $a\in (p(0), p(0)+\epsilon)$, and it is $T$-monotone increasing for every $a\in (p(0)-\epsilon, p(0))$.

Note that the assumption (H) does not entirely rule out degenerate nonnegative $T$-periodic solutions of \eqref{ODE}, or even those degenerate periodic solutions that are stable from above. Indeed, there are cases in which $\mathcal{Y}_{per}$ possess an element that is stable from above but not linearly stable. 

\begin{theorem}\label{theorem-non-degenerate}
Let $u$ and $f$ be as in Theorem \ref{mainconv}. If (H) is satisfied, then only the case (i) of Theorem \ref{mainconv} occurs.
\end{theorem}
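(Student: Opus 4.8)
The plan is to argue by contradiction: suppose case (ii) of Theorem \ref{mainconv} occurs. Then all elements of $\tilde\omega(u)$ are symmetrically decreasing with respect to a common $x_0$ and share a common base $p(t)$, which is a nonnegative $T$-periodic solution of \eqref{ODE}, hence $p\in\mathcal{X}_{per}$; moreover $\tilde\omega(u)$ contains at least two distinct $T$-periodic symmetrically decreasing solutions of \eqref{ground} with this base, together with a heteroclinic $v$ connecting two of them, say $V_-$ and $V_+$ with $V_-\lneqq V_+$ (the strong maximum principle and the zero-number/intersection-comparison machinery should force the periodic solutions with a fixed base to be totally ordered, so I may speak of the smallest one $V_{\min}$ and assume $V_{\min}\lneqq V_+$). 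The first task is therefore to locate $p$ relative to the dichotomy in hypothesis (H): I would show that, for this particular $p$, being linearly stable is incompatible with the existence of two distinct periodic solutions of \eqref{ground} sharing $p$ as base, so we must have $p\in\mathcal{Y}_{per}$.

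The key analytic step is to understand the behavior of the base of an $\omega$-limit solution near $x=\pm\infty$ in relation to the ODE \eqref{odeinitial}. For a symmetrically decreasing solution $V$ with base $p$, the "tail" $V(x,t)$ for large $|x|$ is close to $p(t)$, and along any vertical line $x=$ const it is a subsolution-type object: since $V$ is decreasing away from $x_0$ and satisfies the same PDE, $V(x,t)$ approaches $p(t)$ from below in the periodic case, so intuitively the periodic solutions of \eqref{ground} with base $p$ "emanate upward" from the constant $p$. If $p$ were linearly stable, then by the description recalled after hypothesis (H) the nearby orbits of \eqref{odeinitial} starting slightly above $p(0)$ are $T$-monotone decreasing back to $p$, and I would combine this with a zero-number argument (comparing $u$, or an $\omega$-limit solution, with the spatially constant solution $h(t;a)$ of the ODE for $a$ slightly above $p(0)$) to show that at most one periodic solution of \eqref{ground} can have base $p$ — contradicting the existence of both $V_{\min}$ and $V_+$. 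Hence $p\in\mathcal{Y}_{per}$, meaning there is $\epsilon>0$ such that $h(t;a)$ is $T$-monotone nondecreasing for all $a\in(p(0),p(0)+\epsilon)$.

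Now I exploit $p\in\mathcal{Y}_{per}$ to kill the heteroclinic. The heteroclinic $v(x,t)$ converges as $m\to-\infty$ to $V_-$ and as $m\to+\infty$ to $V_+$ (after relabeling so the limits are ordered $V_-\lneqq V_+$); in particular the supremum $M(t):=\sup_x v(x,t)=v(x_0,t)$ is, along the sequence $t=mT$, a monotone object interpolating between $\max_x V_-(x,0)$ and $\max_x V_+(x,0)$. The idea is that the tail of $v$ is always pinned at the common base $p(t)$, while its amplitude must strictly increase through the heteroclinic. I would compare $v$ with the spatially constant supersolution/subsolution $h(t;a)$ for suitable $a$: because $h(t+mT;a)$ is $T$-monotone nondecreasing for $a\in(p(0),p(0)+\epsilon)$, and $v(\cdot,t)\to p(t)$ at spatial infinity, the zero number $z(v(\cdot,t)-h(t;a))$ is eventually finite and nonincreasing in $t$, and a careful count of sign changes near $x_0$ versus near $\pm\infty$ at $t\to-\infty$ (where $v\approx V_-$) and $t\to+\infty$ (where $v\approx V_+$) yields a contradiction with the monotone ODE behavior — essentially, the heteroclinic would have to make the amplitude overshoot in a way forbidden once the ambient ODE orbits just above $p$ cannot decrease. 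Alternatively, and perhaps more cleanly, one shows $V_+$ cannot be an $\omega$-limit solution at all: its tail being strictly below its maximum together with $T$-monotone nondecreasing ODE dynamics just above $p(0)$ forces, via intersection comparison with $u(\cdot,kT)$, that $u$ is squeezed toward a single periodic solution, so $\tilde\omega(u)$ is a singleton.

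The main obstacle I anticipate is the bookkeeping in the zero-number argument: one must choose the comparison ODE solution $h(t;a)$ (and possibly a second one below $p$) correctly, control the number of intersections uniformly as $t\to\pm\infty$ using the known limits $V_\pm$, and ensure the "drop" in zero number is incompatible with $p\in\mathcal{Y}_{per}$ — all while handling the possibility that $p(0)=0$ (where nonnegativity of solutions interferes with two-sided perturbations) and the possibility that some $V\in\tilde\omega(u)$ is itself spatially constant equal to $p$. Making the ordering of periodic solutions with a fixed base rigorous, and pinning down exactly which periodic solution is selected, is where the argument has to be most careful; I expect to lean on the symmetry and the monotonicity in $x$ guaranteed by Theorem \ref{mainconv} to reduce everything to half-line comparisons on $(x_0,\infty)$.
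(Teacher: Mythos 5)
Your overall strategy is the right one --- use (H) as a dichotomy for the common base $p$ of the symmetrically decreasing $T$-periodic elements of $\tilde{\omega}(u)$ and rule out both horns --- and this is indeed how the paper proceeds: $p\notin\mathcal{Y}_{per}$ on one side, and ``linearly stable $\Rightarrow$ uniqueness of the symmetrically decreasing $T$-periodic solution based at $p$'' on the other. But the horn you treat with zero-number bookkeeping is precisely the one where intersection-comparison does not suffice, and this is a genuine gap. The paper's Proposition \ref{nonexistence-result} shows that the mere existence of \emph{one} symmetrically decreasing $T$-periodic solution $U$ based at $p$ already forces $p\notin\mathcal{Y}_{per}$ (no heteroclinic is needed), and its proof is not a zero-number argument: one writes $U(x,t)=h(t;v(x,t))$ on a left tail $(-\infty,l_0]$, derives the transformed equation $v_t=v_{xx}+\frac{h_{aa}(t;v)}{h_a(t;v)}v_x^2$, bounds the coefficient below by $-M$ via Lemma \ref{derivative-estimates}, compares with $w_t=w_{xx}-Mw_x^2$ (which linearizes under the Cole--Hopf substitution $\psi=\me^{-Mw}$ and converges to a positive constant $\delta_0$), and uses the $T$-monotone nondecreasing property of $h(\cdot;a)$ to propagate the lower bound across periods, contradicting $v\to 0$ as $x\to-\infty$. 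Your proposed count of sign changes of $v(\cdot,t)-h(t;a)$ does not close: that zero number is typically equal to $2$ uniformly as $t\to\pm\infty$ (negative tails, positive hump at $x_0$), so no drop occurs and no contradiction with $T$-monotone nondecreasing ODE orbits emerges; in the autonomous case the analogous exclusion comes from the first integral $\int_\gamma^s f$, and the whole point of Section 3.1 is to replace that missing first integral in the time-periodic setting.

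A secondary issue is that your first step (linear stability of $p$ is incompatible with two distinct symmetrically decreasing $T$-periodic solutions based at $p$) is also only gestured at, and comparison with spatially constant ODE solutions $h(t;a)$ for $a$ slightly above $p(0)$ again gives a constant intersection number rather than a contradiction. The paper's uniqueness argument instead slides one periodic solution against the other: setting $W^{\lambda}=V_--V_+(\cdot-\lambda)$, it tracks the leftmost zero $\xi^{\lambda}(t)$ to the right of $x_0+\lambda$, pushes $\lambda$ so that both solutions lie within $\delta$ of $p$ on the relevant region, and then uses the linearized bound $\partial_u f(t,s)\leq\partial_u f(t,p(t))+\tfrac{\mu}{2}$ together with the exponentially decaying supersolution $\bar h(t)$ to force $W^{\lambda_\delta}\to 0$, contradicting its positivity and $T$-periodicity. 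So while your decomposition of the problem is correct, both sub-arguments as sketched would fail, and the essential missing idea is the change-of-variables/comparison mechanism of Proposition \ref{nonexistence-result}.
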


To explain the role of the assumption (H) in the above theorem, let us give an outline of the proof here. First we show that if $p\in \mathcal{X}_{per}$ is the base of a symmetrically decreasing $T$-periodic solution of \eqref{ground}, then $p$ cannot belong to $\mathcal{Y}_{per}$; see Proposition \ref{nonexistence-result}. The condition (H) is used in the second step, which ensures the uniqueness (up to shift in $x$) of symmetrically decreasing $T$-periodic solutions based at such a $p$. Theorem \ref{theorem-non-degenerate} then follows immediately from Theorem \ref{mainconv}.  

\vskip 5pt
As mentioned above, in the autonomous case $f=f(u)$, the convergence of any bounded solution to an equilibrium was obtained in \cite{dm}. It was also proved in \cite{dm} that if the equilibrium is a positive zero of $f$, say $\gamma$, then none of the following conditions can hold for some $\epsilon>0$:
\begin{equation*}
 f(s)<0 \,\hbox{ for } \,s\in (\gamma-\epsilon,\gamma); 
\qquad  f(s)\leq 0 \,\hbox{ for } \,s \in (\gamma-\epsilon,\gamma+\epsilon).
\end{equation*}
In the following theorem, we show that similar result still holds in the time-periodic case.

\begin{theorem}\label{precise-omega-limit}
Let $u$ and $f$ be as in Theorem \ref{mainconv} and $p\in \mathcal{X}_{per}$. Then $p\notin \tilde{\omega}(u)$
if one of the following conditions holds for some $\epsilon>0$:
\begin{itemize}
\item[{\rm (i)}] $h(t;a)$ is $T$-monotone decreasing for $a\in (p(0)-\epsilon,p(0))$;
\item[{\rm (ii)}] $h(t;a)$ is $T$-monotone nonincreasing for $a\in (p(0)-\epsilon,p(0)+\epsilon)$,
\end{itemize} 
where $h(t;a)$ is the solution of \eqref{odeinitial} with initial value $a$. 
\end{theorem}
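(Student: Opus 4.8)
The plan is to argue by contradiction: suppose $p\in\tilde\omega(u)$. By Theorem \ref{mainconv}, the element of $\tilde\omega(u)$ through $p$ (viewed as the spatially constant function $p(0)$) must be spatially constant — this is where Corollary \ref{coro-homo} enters, since if one $\omega$-limit point is spatially homogeneous then $\tilde\omega(u)$ is a singleton, namely the constant solution $p(t)$. Hence $u(x,k_jT)\to p(0)$ in $L^\infty_{loc}(\R)$ along some sequence $k_j\to\infty$, and in fact $u(x,mT)\to p(0)$ as $m\to\infty$ locally uniformly, with the full orbit accumulating only on $\{p(t):t\in\R\}$. The contradiction will come from combining the sign condition on $f$ near $\gamma=p(0)$ with the structure of $u$ — in particular, that $u$ is nonnegative with compact support, so it starts out ``below'' $p(0)$ near $x=\pm\infty$, and zero-number/comparison arguments control how it can approach $p(0)$.

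I would treat the two cases in parallel, reducing both to a statement about the behaviour of the ODE flow $h(t;a)$ for $a$ slightly below $p(0)$. In case (i), $h(t;a)$ is $T$-monotone decreasing for $a\in(p(0)-\epsilon,p(0))$, so the periodic orbit $p(t)$ is, loosely speaking, ``unstable from below''; in case (ii) it is $T$-monotone nonincreasing on a full two-sided neighbourhood, so trajectories starting just below $p(0)$ can at best stay below (periodic case) or drift down, and never rise up to $p(0)$. The key step is: take $m_0$ large so that $u(x,m_0T)<p(0)$ for all $|x|\ge R$ (possible since the support analysis in Theorem \ref{mainconv} places the eventual symmetry point $x_0$ in a bounded region and $u$ tends to a zero of $f$ at infinity — here the relevant zero of the $t$-flow is $0$, strictly below $p(0)$; more carefully, since $u$ has compact support and is bounded, for large $m$ the quantity $\sup_{|x|\ge R}u(x,m_0T)$ is below $p(0)-\delta$ for suitable $R,\delta$). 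Then use the comparison principle against the spatially constant supersolution built from $h(t;a)$ with $a=p(0)-\delta'$: since that ODE trajectory satisfies $h(t+mT;a)\le a<p(0)$ for all $m\ge0$ (monotone nonincreasing, or strictly decreasing), $u(x,t)$ cannot converge to $p(0)$ on the region $|x|\ge R$, and a zero-number argument then propagates this obstruction inward to contradict $u(\cdot,mT)\to p(0)$ locally uniformly. For case (i) one must also rule out approach ``from above'': if $u(x,m_0T)>p(0)$ somewhere, one uses instead that along $\tilde\omega(u)$ the limit is the constant $p(t)$ and that a solution squeezed between $p(0)$ and a slightly-larger $T$-monotone-decreasing trajectory would have to converge to $p$, but then a further comparison with an $a<p(0)$ trajectory (which escapes downward) shows $u$ must dip below $p(0)$ infinitely often near infinity, again contradicting convergence.

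The main obstacle I expect is the bookkeeping needed to rule out the ``approach from above'' scenario in case (i) — case (ii) is cleaner because the two-sided sign condition kills both directions at once. The honest way to handle case (i) is probably to invoke that $p(t)$ being the full $\tilde\omega(u)$ forces $u$ to be eventually trapped in an arbitrarily thin tube around $p(t)$ \emph{in $L^\infty_{loc}$}, then pick a spatial point, look at the one-parameter family of ODE solutions starting near $p(0)$, and use the $T$-monotone-decreasing-from-below property to construct, for each small $\delta>0$, a time $m_\delta$ after which $u$ at that point must have left the $\delta$-tube — a contradiction. A secondary technical point is justifying that the ``at infinity'' value of any $\omega$-limit solution here is genuinely $0$ (or at least a zero of the $t$-flow strictly below $p(0)$), which follows from compact support of $u_0$ together with the base being a $T$-periodic solution of \eqref{ODE} and the nonnegativity constraint; this needs a short argument but no new ideas beyond what Theorem \ref{mainconv} already supplies. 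Throughout, the comparison principle and intersection-number (zero-number) arguments — exactly the tools highlighted in the introduction — do all the real work.
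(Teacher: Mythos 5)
Your opening reduction is correct and matches the paper: if $p\in\tilde{\omega}(u)$ then Corollary \ref{coro-homo} forces $\tilde{\omega}(u)=\{p\}$, so $u(\cdot,t+mT)\to p(t)$ in $L^\infty_{loc}(\R)$, and the task is to contradict this. But the core mechanism you propose --- compare $u$ on the exterior region $\{|x|\ge R\}$ with the spatially constant solution $h(t;a)$, $a=p(0)-\delta'$, and ``propagate the obstruction inward'' --- does not work, for a concrete reason: the comparison principle on $\{|x|\ge R,\ t\ge m_0T\}$ requires $u\le h$ on the \emph{lateral} boundary $|x|=R$ for all later times, and precisely because $u(\pm R,t+mT)\to p(t)>h(t+mT;a)$ (the hypothesis makes $h$ stay at or below $a<p(0)$), the ordering is violated there after finitely many periods. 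Relatedly, your claim that $\sup_{|x|\ge R}u(x,m_0T)<p(0)-\delta$ can only be arranged with $R=R(m_0)\to\infty$, so the domain of comparison cannot be fixed; and your fallback for the ``approach from above'' scenario in case (i) --- that $u$ must ``dip below $p(0)$ infinitely often near infinity'' --- contradicts nothing, since the convergence to $p$ is only in $L^\infty_{loc}$ and $u$ is in any case near $0$ at spatial infinity at every finite time. The phrase ``a zero-number argument then propagates this obstruction inward'' is exactly the missing proof: knowing $u<p$ far out at each finite time is fully compatible with $u\to p$ locally uniformly (this is what actually happens when the limit is the stable state $p_1$ in Proposition \ref{bistable-theorem}).

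What the paper does instead is substantially different and is where the real work lies. For case (i) it first shows $\sup_x u(x,t)>p(t)$ for all $t$, tracks the rightmost (simple) zero $\xi(t)$ of $u(\cdot,t)-p(t)$ and proves $\partial_xu(\xi(t),t)\to0$; the barrier is not a constant but the symmetric $T$-periodic solution $w$ of the Dirichlet problem \eqref{steady-state-equation} on $[-R,R]$ with boundary data exactly $p(t)$ (Proposition \ref{existence-bound-interval} via Remark \ref{re-steady-state}), whose existence for a merely unstable-from-above/below $p$ is itself a nontrivial result of Section 3; the contradiction comes either from trapping $u$ below a translate of $w$ or from a Hopf-lemma comparison of $\partial_xw(-R,t)$ with $\partial_xu(\xi(t),t)$. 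For case (ii) the paper first extracts a second periodic solution $\tilde q$ with $p(0)-\epsilon\le\tilde q(0)<p(0)$ (using Step 1), then performs the change of variables $u=h(t;v_k)$ so that $v_k$ solves $v_t=v_{xx}+(h_{aa}/h_a)v_x^2$, dominates $v_k$ by the solution of $\psi_t=\psi_{xx}+M\psi_x^2$ with initial data equal to $\tilde q(0)$ outside a compact set, and uses the Cole--Hopf transform to show $\psi\to\tilde q(0)$ uniformly, forcing $\limsup_k v_k(\cdot,0)\le\tilde q(0)<p(0)$. None of these ingredients (the finite-interval periodic Dirichlet barrier, the intersection-point/derivative tracking, the $u=h(t;v)$ substitution and Cole--Hopf decay) appears in your sketch, so the proposal as written has a genuine gap rather than being an alternative route.
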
 

The above theorem in particular implies that if $p\in \mathcal{X}_{per}$ is unstable from below with respect to \eqref{odeinitial}  or it is an intermediate element of a family of $T$-periodic solutions of \eqref{ODE} that forms a continuum,  then it can never belong to $\tilde{\omega}(u)$. 

It should be pointed out that the proof of Theorem \ref{precise-omega-limit} is completely different from that of the autonomous case used in \cite{dm}. To overcome considerable difficulties arising from time dependency $f(t,u)$, we develop new techniques based on change of variables.    

\vskip 5pt

In what follows, we apply the above theorems to problem \eqref{E} with two specific classes of $f$---bistable nonlinearity and combustion nonlinearity, and specify more precisely which $T$-periodic solutions of \eqref{ground} can possibly be selected as the limit.

\vskip 5pt

{\bf (Bistable nonlinearity)} {\it The solution $0$ of \eqref{ODE} is stable from above with respect to \eqref{odeinitial}, and there exist two positive $T$-periodic solutions $p_1$ and $q_1$ of \eqref{ODE} satisfying $0<q_1<p_1$, and $p_1$ is stable from below with respect to \eqref{odeinitial}. Furthermore, 
there are no other positive $T$-periodic solutions of \eqref{ODE}.}

\vskip 5pt

{\bf (Combustion nonlinearity)} {\it There exist a family of $T$-periodic solutions $\{q_{\lambda}\}_{\lambda\in[0,1]}$ of \eqref{ODE} such that $\{q_{\lambda}(0)\}_{\lambda\in[0,1]}$ forms a continuum, and another $T$-periodic solution $p_1$ satisfying $0=q_0<q_1<p_1$ and $p_1$ is stable from below with respect to \eqref{odeinitial}. Furthermore, there are no other positive $T$-periodic solutions of \eqref{ODE}.}

\vskip 5pt

Such two types of nonlinearities appear in modeling various phenomena in applications including mathematical ecology, populations dynamics and combustion; see e.g., \cite{aw1,aw2,fm,k,shen,xin} and references therein. A typical example of these nonlinearities involving time variations is of form
$f(t,u)=b(t)g(u)$, where $b(t)$ is positive and $T$-periodic, and $g$ is a autonomous bistable or combustion nonlinearity. Note that our bistable and combustion nonlinearities cover a much wider variety of nonlinearities which are allowed to change signs with respect to $t$. 

\vskip 5pt

In the bistable case, we have the following convergence result.

\begin{prop}\label{bistable-theorem}
Assume that $f$ is a bistable nonlinearity satisfying \eqref{0state}, \eqref{tperiod}, \eqref{assume-lipschitz} and
\begin{equation}\label{linearly-stable}
\int_{0}^{T} \partial_u f(t,0)dt < 0.  
\end{equation}
Let $u(x,t)$ be a bounded solution of \eqref{E} with nonnegative and compactly supported initial function $u_0$. 
Then we have 
$$
\tilde{\omega}(u)= \big\{0\big\},\,\,\big\{U\big\} \,\hbox{ or }\, \big\{ p_1\big\},   
 $$
where $U(x,t)$ is a symmetrically decreasing $T$-periodic  solution of \eqref{ground} based at $0$.
\end{prop}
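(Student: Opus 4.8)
The plan is to apply Theorem~\ref{mainconv} and Theorem~\ref{precise-omega-limit} to the bistable structure, using \eqref{linearly-stable} to eliminate the degenerate alternatives, together with a careful analysis of which $T$-periodic solutions can serve as the base of a symmetrically decreasing $\omega$-limit solution.

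\textbf{Step 1: Reduce to the possible bases.} First I would invoke Theorem~\ref{mainconv}: there is $x_0$ such that every element of $\tilde\omega(u)$ is spatially constant or symmetrically decreasing with respect to $x_0$. A spatially constant element must be a nonnegative $T$-periodic solution of \eqref{ODE}, hence (by the bistable hypothesis) one of $0$, $q_1$, $p_1$. Next, Theorem~\ref{precise-omega-limit} rules out $q_1$: since $0<q_1<p_1$ and there are no other positive periodic solutions, the comparison principle on the ODE forces, for $a$ slightly below $q_1(0)$, that $h(t;a)$ lies between $0$ and $q_1$ and is $T$-monotone; because $0$ is stable from above and $p_1$ is stable from below, the unstable direction of $q_1$ must be from below, so $h(t;a)$ for $a\in(q_1(0)-\epsilon,q_1(0))$ is $T$-monotone decreasing (converging to $0$), and condition (i) of Theorem~\ref{precise-omega-limit} applies, giving $q_1\notin\tilde\omega(u)$. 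The same argument, or the explicit remark following Theorem~\ref{precise-omega-limit}, shows any symmetrically decreasing $\omega$-limit solution based at $q_1$ is impossible, because its base would have to be $q_1$. So the only admissible bases for a symmetrically decreasing element are $0$ and $p_1$.

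\textbf{Step 2: Exclude a symmetrically decreasing solution based at $p_1$, and handle the base $0$.} A symmetrically decreasing solution $U$ based at $p_1$ would satisfy $U(x,t)>p_1(t)$ for all $x$ (since it decreases to $p_1$ at infinity and is not constant), but then $0\le u_0$ compactly supported forces, by comparison, $\limsup$ of $u$ to be controlled by solutions starting from data that decay to $0$; more directly, since $p_1$ is stable from below with respect to the ODE and $0$ is stable from above, a symmetrically decreasing profile sitting strictly above $p_1$ and tending to $p_1$ at $\pm\infty$ would require the base to be unstable in a way incompatible with $p_1$ being reached from a compactly supported datum --- here I would use the argument behind Proposition~\ref{nonexistence-result} (bases of symmetrically decreasing $T$-periodic solutions cannot lie in $\mathcal Y_{per}$) together with the fact that \eqref{linearly-stable} makes $0$ linearly stable, so $0\notin\mathcal Y_{per}$ is not the relevant obstruction but $p_1$'s stability from below is: a symmetrically decreasing solution based at $p_1$ would need $h(t;a)$ for $a$ just above $p_1(0)$ to be $T$-monotone nondecreasing, contradicting that $p_1$ is the largest positive periodic solution with nothing above it being periodic and the solution from a compactly supported datum staying bounded. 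Thus the base must be $0$, i.e. the only symmetrically decreasing elements of $\tilde\omega(u)$ are based at $0$.

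\textbf{Step 3: Collapse case (ii) to a singleton.} It remains to show $\tilde\omega(u)$ cannot contain more than one element. By Theorem~\ref{mainconv}(ii), if it did, all elements would be symmetrically decreasing with the same base, which by Step~2 is $0$; but $\eqref{linearly-stable}$ says $0$ is linearly stable, hence $0\notin\mathcal Y_{per}$, so by Proposition~\ref{nonexistence-result} there is no symmetrically decreasing $T$-periodic solution based at $0$ --- wait, that would kill the $\{U\}$ alternative too, so instead I must read Proposition~\ref{nonexistence-result} the other way: it says bases in $\mathcal Y_{per}$ are excluded, and $0$ linearly stable means $0\notin\mathcal Y_{per}$, which is \emph{consistent} with a symmetrically decreasing solution based at $0$ existing. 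The heteroclinic alternative of case (ii) connects two $T$-periodic symmetrically decreasing solutions $V_\pm$ based at $0$; but by the uniqueness-up-to-shift argument used for Theorem~\ref{theorem-non-degenerate} (valid once $0$ is linearly stable, which is exactly what (H) gives for the element $0$ here), $V_-$ and $V_+$ must coincide up to translation, and since both are symmetrically decreasing with respect to the \emph{same} $x_0$, they are identical, so the heteroclinic is constant --- contradiction. Hence $\tilde\omega(u)$ is a singleton, equal to $\{0\}$, $\{p_1\}$, or $\{U\}$ with $U$ a symmetrically decreasing $T$-periodic solution based at $0$. The main obstacle is Step~2: cleanly excluding a symmetrically decreasing profile based at $p_1$ using only the bistable hypotheses and the compact support of $u_0$ --- this requires combining the comparison principle with the stability-from-below of $p_1$ and the nonexistence result for bases in $\mathcal Y_{per}$, and one must be careful that $p_1$ itself is not accidentally in $\mathcal Y_{per}$ in a way that both allows and forbids the relevant profiles.
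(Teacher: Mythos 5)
Your overall strategy is the paper's: reduce via Theorem \ref{mainconv} to constant or symmetrically decreasing elements, use \eqref{linearly-stable} to make the uniqueness argument of Theorem \ref{theorem-non-degenerate} apply to the base $0$, and use Theorem \ref{precise-omega-limit}(i) to discard $q_1$ itself. Step 3 is essentially correct as you state it. However, there are two places where the tool you cite does not prove what you need, and you yourself flag the second one as the ``main obstacle.''

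First, in Step 1 you claim that Theorem \ref{precise-omega-limit} (or the remark after it) excludes a \emph{symmetrically decreasing} $\omega$-limit solution based at $q_1$ ``because its base would have to be $q_1$.'' Theorem \ref{precise-omega-limit} only asserts $q_1\notin\tilde\omega(u)$ as a spatially constant element; it says nothing about nonconstant solutions whose base is $q_1$. The correct tool is Proposition \ref{nonexistence-result}: since $0$ is stable from above and $p_1$ is stable from below with no other positive periodic solutions, $h(t;a)$ is $T$-monotone increasing for $a\in(q_1(0),p_1(0))$, so $q_1\in\mathcal Y_{per}$, and Proposition \ref{nonexistence-result} then forbids any symmetrically decreasing $T$-periodic solution based at $q_1$. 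Second, your Step 2 never lands on a valid argument for excluding a symmetrically decreasing solution based at $p_1$: the appeals to comparison with compactly supported data and to ``$p_1$'s stability from below'' do not close the loop (and the claim that such a solution ``would need $h(t;a)$ for $a$ just above $p_1(0)$ to be $T$-monotone nondecreasing'' is the opposite of what Proposition \ref{nonexistence-result} gives). The paper's argument is a one-liner via Corollary \ref{coro-intermediate}: a symmetrically decreasing $T$-periodic solution based at $p_1$ would force the existence of another element $q\in\mathcal X_{per}$ with $p_1<q<\max_x U$, contradicting the bistable hypothesis that there are no positive $T$-periodic ODE solutions other than $q_1$ and $p_1$. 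With these two corrections your outline coincides with the paper's proof.
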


The assumption \eqref{linearly-stable} is only used to guarantee the uniqueness of the solution $U(x,t)$ (up to shift in $x$), while its existence is not a priori assumed. Proposition \ref{bistable-theorem} points out that if it exists, then it can possibly be the only element of $\tilde{\omega}(u)$.  

We remark that the above convergence result is not a consequence of the sharp transition result of bistable equations proved in \cite{po1}. The approach in \cite{po1} makes substantial use of the condition \eqref{deri-negative} and that $f(t,u)$ can be bounded from above and below by autonomous bistable nonlinearities, while these assumptions are not needed in our theorem.

\vskip 5pt

The convergence result for combustion equations is stated as follows. 
\begin{prop}\label{combustion-theorem}
Assume that $f$ is a combustion nonlinearity satisfying \eqref{0state}, \eqref{tperiod} and \eqref{assume-lipschitz}. Let $u(x,t)$ be a bounded solution of \eqref{E} with nonnegative and compactly supported initial function $u_0$.
Then we have 
$$
\tilde{\omega}(u)= \big\{0\big\},\,\,\big\{q_1\big\} \,\hbox{ or }\, \big\{ p_1\big\}.
 $$
\end{prop}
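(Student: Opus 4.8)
The plan is to combine the general trichotomy of Theorem~\ref{mainconv} with the specific structure of the combustion nonlinearity, using Theorems~\ref{theorem-non-degenerate} and~\ref{precise-omega-limit} as the main tools. By Theorem~\ref{mainconv}, either $\tilde{\omega}(u)$ is a singleton of the form $\{p\}$ with $p\in\mathcal{X}_{per}$, or a singleton $\{U\}$ with $U$ a $T$-periodic symmetrically decreasing solution of \eqref{ground}, or else case (ii) with multiple symmetrically decreasing periodic solutions and heteroclinic connections. First I would dispose of case (ii): one checks that the combustion nonlinearity satisfies hypothesis (H). For the quenching family $\{q_\lambda\}_{\lambda\in[0,1]}$ parametrized by a continuum $\{q_\lambda(0)\}$, each such $q_\lambda$ with $\lambda<1$ is $T$-degenerate but lies in $\mathcal{Y}_{per}$, since perturbing the initial value upward within the continuum gives another periodic (hence $T$-monotone nondecreasing) solution; the top endpoint $q_1$ lies in $\mathcal{Y}_{per}$ as well provided the solutions immediately above $q_1$ are $T$-monotone nondecreasing, which follows from $q_1$ being the largest element of the quenching family and from the hypothesis that there are no periodic solutions strictly between $q_1$ and $p_1$ (so solutions starting just above $q_1$ must be $T$-monotone increasing). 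Meanwhile $p_1$ is stable from below and, being an isolated periodic solution from above in the combustion picture, one argues it is either linearly stable or lies in $\mathcal{Y}_{per}$; and $0=q_0$ lies in $\mathcal{Y}_{per}$ trivially since $h(t;a)$ for small $a>0$ is $T$-monotone nondecreasing (it belongs to the quenching continuum or increases). Thus (H) holds, Theorem~\ref{theorem-non-degenerate} applies, and only case (i) of Theorem~\ref{mainconv} can occur, so $\tilde{\omega}(u)=\{p\}$ with $p\in\mathcal{X}_{per}$, or $\tilde{\omega}(u)=\{U\}$ with $U$ a $T$-periodic symmetrically decreasing solution.

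Next I would rule out the symmetrically decreasing case $\tilde{\omega}(u)=\{U\}$. Let $p$ be the base of $U$; then $p\in\mathcal{X}_{per}$, so $p$ is either $0$, some $q_\lambda$, or $p_1$. If $p=0=q_0$ or $p=q_\lambda$ with $\lambda<1$, then $p$ is an interior or left endpoint of the continuum $\{q_\lambda(0)\}$, and Proposition~\ref{nonexistence-result} (invoked in the proof of Theorem~\ref{theorem-non-degenerate}) says that the base of a symmetrically decreasing $T$-periodic solution cannot belong to $\mathcal{Y}_{per}$ — but all of $0,q_1,\dots$ up to $q_1$ belong to $\mathcal{Y}_{per}$, a contradiction. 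If $p=q_1$, the same Proposition~\ref{nonexistence-result} argument applies since $q_1\in\mathcal{Y}_{per}$. If $p=p_1$, one uses that $p_1$ is stable from below: by Theorem~\ref{precise-omega-limit}(i), since $h(t;a)$ is $T$-monotone decreasing for $a$ slightly below $p_1(0)$ (a consequence of stability from below together with the absence of periodic solutions just below $p_1$), we get $p_1\notin\tilde{\omega}(u)$, which contradicts $U$ being based at $p_1$ — here I would need to confirm that $U$ based at $p_1$ forces $p_1\in\tilde\omega(u)$, which holds because the base is a limit of $U$ along $|x|\to\infty$ and by the structure $p_1(\cdot)$ itself must be in $\omega(u)$; more carefully one shows a symmetrically decreasing solution based at $p_1$ cannot coexist with the extinction/propagation dichotomy of the combustion problem. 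So the symmetrically decreasing alternative is excluded, leaving $\tilde{\omega}(u)=\{p\}$ for some $p\in\mathcal{X}_{per}$.

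Finally I would narrow down which $p\in\mathcal{X}_{per}$ can actually occur. Apply Theorem~\ref{precise-omega-limit}: any $p=q_\lambda$ with $0<\lambda<1$ is an intermediate element of the continuum of periodic solutions, hence (as remarked after Theorem~\ref{precise-omega-limit}) satisfies condition (ii) there and so $p\notin\tilde\omega(u)$. This eliminates all the $q_\lambda$ except the endpoints $q_0=0$ and $q_1$. For $q_1$ itself, since it is the top of the continuum, $h(t;a)$ for $a$ just below $q_1(0)$ is $T$-monotone nondecreasing (it lies in the continuum), so condition (i) of Theorem~\ref{precise-omega-limit} fails; and for $a$ just above $q_1(0)$, $h(t;a)$ is $T$-monotone increasing, so condition (ii) also fails — hence Theorem~\ref{precise-omega-limit} does not exclude $q_1$, consistent with it being an admissible limit. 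Likewise $0$ and $p_1$ are not excluded. Therefore $\tilde\omega(u)\in\{\{0\},\{q_1\},\{p_1\}\}$, as claimed. The main obstacle I anticipate is the verification of hypothesis (H) for the combustion nonlinearity — in particular checking that the top endpoint $q_1$ of the degenerate quenching family lands in $\mathcal{Y}_{per}$ and handling the possibility that $p_1$ is neither linearly stable nor in $\mathcal{Y}_{per}$, which requires a careful reading of the exact definition of the combustion class (stability from below plus no other positive periodic solutions) to conclude the needed one-sided monotonicity of nearby ODE trajectories.
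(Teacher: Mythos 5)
Your overall skeleton (reduce to the trichotomy of Theorem \ref{mainconv}, rule out symmetrically decreasing limits, then prune the spatially homogeneous candidates with Theorem \ref{precise-omega-limit}(ii)) is the right one, and your treatment of the $q_\lambda$'s is correct: they all lie in $\mathcal{Y}_{per}$, so Proposition \ref{nonexistence-result} forbids symmetrically decreasing $T$-periodic solutions based at them, and Theorem \ref{precise-omega-limit}(ii) removes the interior $q_\lambda$, $0<\lambda<1$, as possible limits. But two steps are genuinely broken, both concerning $p_1$. First, hypothesis (H) need not hold here: the combustion hypotheses say nothing about the behavior of \eqref{odeinitial} above $p_1$ or about the linearization at $p_1$, so $p_1$ may be degenerate and stable from above, in which case it is neither linearly stable nor in $\mathcal{Y}_{per}$. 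Note that Proposition \ref{combustion-theorem} carries no analogue of the non-degeneracy condition \eqref{linearly-stable} imposed in the bistable case, so a route through Theorem \ref{theorem-non-degenerate} cannot work in the stated generality; you flag this yourself as an "obstacle," but it is not a technicality to be checked, it is a hypothesis that can fail. Second, your exclusion of a symmetrically decreasing $U$ based at $p_1$ is wrong on two counts: stability of $p_1$ from below, combined with the absence of periodic solutions in $(q_1,p_1)$, forces $h(t;a)$ to be $T$-monotone \emph{increasing} for $a$ just below $p_1(0)$ --- the opposite of what you assert --- so condition (i) of Theorem \ref{precise-omega-limit} fails at $p_1$ (as it must, since $p_1$ is one of the admissible limits); and even if one knew $p_1\notin\tilde{\omega}(u)$, that would not exclude $\tilde{\omega}(u)=\{U\}$ with $U$ based at $p_1$, because the base of $U$ is its limit as $|x|\to\infty$ and is not itself an element of $\tilde{\omega}(u)$.

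The missing ingredient is Corollary \ref{coro-intermediate}: a symmetrically decreasing $T$-periodic solution of \eqref{ground} based at $p_1$ would force the existence of another element $q\in\mathcal{X}_{per}$ with $p_1<q<\max_x U$, contradicting the combustion assumption that there are no positive $T$-periodic ODE solutions other than the $q_\lambda$ and $p_1$. This is exactly how the paper proceeds: Proposition \ref{nonexistence-result} (for the $q_\lambda$'s) together with Corollary \ref{coro-intermediate} (for $p_1$) shows that \eqref{ground} admits \emph{no} symmetrically decreasing $T$-periodic solution at all, which simultaneously kills case (ii) of Theorem \ref{mainconv} (whose limits $V_\pm$ would be such solutions) and the symmetrically decreasing singleton in case (i), with no appeal to (H). Corollary \ref{coro-homo} then gives convergence to a single spatially homogeneous $T$-periodic solution, and Theorem \ref{precise-omega-limit}(ii) finishes the pruning exactly as in your last paragraph. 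With the (H) detour removed and the $p_1$ step replaced by Corollary \ref{coro-intermediate}, your argument becomes the paper's proof.
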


\begin{remark}\label{sharp-transition}
{\rm Given the above two propositions, it is also very interesting to investigate whether there is a sharp transition between extinction (i.e., the solution $u(x,t)$ converges to $0$) and propagation (i.e., the solution $u(x,t)$ converges to $p_1$) when the initial data are varied. As remarked above, the sharp transition has been observed in autonomous bistable and combustion equations (\cite{dm,z1}), and also in nonautonomous bistable equations under the condition that $f(t,u)$ can be bounded from above and below by autonomous bistable nonlinearities (\cite{po1}). We believe that, in the time-periodic case, such a condition can be removed if a priori existence of the symmetrically decreasing periodic solution $U(x,t)$ is assumed. Indeed, by using Proposition \ref{bistable-theorem} above and \cite[Theorem 5.1]{po1}, one can show the local instability of  $U(x,t)$, which immediately implies the sharp transition. As for the nonautonomous combustion equations, little has been known about the sharp transition result. We leave this problem for further study.}
\end{remark}

\noindent{\bf Outline of the paper.} In Section 2, we present some preliminaries and show some basic properties of the $\tilde{\omega}$-limit set of bounded solutions of \eqref{E}. Section 3 is concerned with some properties of symmetrically decreasing $T$-periodic solutions of problem \eqref{ground}. First we give a sufficient condition for the existence of such solutions on the entire real line $\R$. Then we show the existence on a finite interval with Dirichlet boundary condition when the base is unstable from above. The techniques developed in this section are key tools in showing our main theorems. In section 4, we complete the proof of Theorems \ref{mainconv} and \ref{theorem-non-degenerate}. Section 5 is devoted to the proof of Theorems \ref{precise-omega-limit} and Propositions \ref{bistable-theorem} and \ref{combustion-theorem}.


\SE{Preliminaries}
In this section, we collect some basic properties which will be needed later. Throughout this section, we assume the nonlinearity $f$ satisfies \eqref{0state} and \eqref{tperiod}.

\subsection{Zero-number properties}
In this subsection, we recall some properties of zero-number arguments. Let $\mathcal{Z}(w)$ denote the number of sign changes of a continuous function $w(x)$ defined on $\R$, namely, the supremum over all $k\in\N$ such that there exist real numbers $x_1<x_2<\cdots<x_{k+1}$ with 
$$ w(x_i)\cdot w(x_{i+1})<0 \,\hbox{ for all } i=1,2,\ldots,k.$$ 
We set $\mathcal{Z}(w)=-1$ if $w\equiv 0$. Clearly, if $w$ is a smooth function having only simple zeros on $\R$, then $\mathcal{Z}(w)$ coincides with the number of zeros of $w$. We also use the notation $\mathcal{Z}_I(w)$ to denote the number of sign changes of $w$ on a given interval $I$.

\begin{lemma}\label{zero1}
Let $w\not\equiv 0$ be a solution of the equation 
\begin{equation}\label{zeroeq}
w_t=w_{xx}+c(x,t)w \quad \hbox{for}\ \ t\in(t_1,t_2),\ x\in\R,
\end{equation} 
where the coefficient function $c$ is bounded. Then the following statements hold:
\begin{itemize}
\item[{\rm (i)}]
For each $t\in (t_1,t_2)$, all zeros of $w(\cdot,t)$ are isolated;
\item[{\rm (ii)}]
$t\mapsto\mathcal{Z}(w(\cdot,t))$ is a nonincreasing function with values in $\N\cup\{0\}\cup\{\infty\}$; 
\item[{\rm (iii)}]
If $w(x^*,t^*)=w_x(x^*,t^*)=0$ for some $t^*\in (t_1,t_2)$, $x^*\in\R$, then
$$\mathcal{Z}(w(\cdot,t))> \mathcal{Z}(w(\cdot,s)) \quad \hbox{for all}\ \ t\in(t_1,t^*),\ s\in(t^*,t_2) $$
whenever $\mathcal{Z}(w(\cdot,s))<\infty$.
\end{itemize}
Furthermore, the same assertion holds for $\mathcal{Z}_I(w(\cdot,t))$ for any interval $I\subseteq \R$, provided that either $w$ never vanishes on the boundary of $I$ or $w\equiv 0$ on the boundary of $I$.
\end{lemma}

The proof of this lemma is referred to \cite{an} when $I$ is a finite interval, and it can be easily extended to the infinite interval case; see the remarks in \cite{dm,dgm}. As an easy application of Lemma \ref{zero1}, we have the following two lemmas.

\begin{lemma}\label{zero2}
Let $w\in C^{2,1} (\R\times (t_1,t_2))$ be a solution of \eqref{zeroeq}. Suppose that there exists $x_0\in\R$ such that $w(x_0,t)=w_x(x_0,t)=0$ for every $t\in (t_1,t_2)$. Then $w\equiv 0$.
\end{lemma}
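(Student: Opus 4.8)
The plan is to argue by contradiction using the zero-number machinery of Lemma~\ref{zero1}. The hypothesis says that $w(\cdot,t)$ carries a \emph{multiple} zero at the fixed location $x=x_0$ at \emph{every} time $t\in(t_1,t_2)$, and the point is that no non-trivial solution of \eqref{zeroeq} can sustain such a degeneracy: each multiple zero strictly drops the (localized) zero number, while a strictly decreasing integer-valued function of $t$ on an interval cannot exist.

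So suppose $w\not\equiv0$. Then there is some $t^*\in(t_1,t_2)$ with $w(\cdot,t^*)\not\equiv0$. By Lemma~\ref{zero1}(i) the zeros of $w(\cdot,t^*)$ are isolated, and $w(x_0,t^*)=0$ is one of them; hence I may fix $\delta>0$ so small that $w(\cdot,t^*)$ has no zero in $(x_0-\delta,x_0)\cup(x_0,x_0+\delta)$, and in particular $w(x_0\pm\delta,t^*)\neq0$. By continuity of $w$ there is $\eta>0$ with $(t^*-\eta,t^*+\eta)\subset(t_1,t_2)$ and $w(x_0\pm\delta,t)\neq0$ for all $t\in(t^*-\eta,t^*+\eta)$.

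Now set $I:=(x_0-\delta,x_0+\delta)$. On the strip $x\in\R$, $t\in(t^*-\eta,t^*+\eta)$, the function $w$ solves \eqref{zeroeq} and never vanishes on $\partial I=\{x_0-\delta,\,x_0+\delta\}$, so the ``Furthermore'' clause of Lemma~\ref{zero1} applies to $\mathcal{Z}_I(w(\cdot,t))$; moreover, since $\overline{I}$ is compact and $w(\cdot,t)$ has isolated zeros with $w\neq0$ at the endpoints, $\mathcal{Z}_I(w(\cdot,t))$ is finite for each such $t$. On the other hand, the hypothesis $w(x_0,t)=w_x(x_0,t)=0$ means that $x_0\in I$ is a multiple zero of $w(\cdot,t)$ for every $t\in(t^*-\eta,t^*+\eta)$. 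Applying Lemma~\ref{zero1}(iii): for any $t^*-\eta<t<s<t^*+\eta$, choosing a multiple-zero instant $\tau\in(t,s)$ gives $\mathcal{Z}_I(w(\cdot,t))>\mathcal{Z}_I(w(\cdot,s))$ (the proviso there is met since $\mathcal{Z}_I$ is finite). Thus $t\mapsto\mathcal{Z}_I(w(\cdot,t))$ would be a strictly decreasing function on the interval $(t^*-\eta,t^*+\eta)$ with values in $\{-1,0,1,2,\dots\}$, which is impossible. Therefore $w\equiv0$.

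I expect the only delicate point to be the localization carried out in the second and third paragraphs: one must use Lemma~\ref{zero1}(i) to choose $I$ and the time window so that the boundary hypothesis of the ``Furthermore'' clause holds and so that $\mathcal{Z}_I$ is genuinely finite, thereby sidestepping the a priori possibility that $w(\cdot,t)$ has infinitely many zeros on $\R$ (which would render the global zero number useless). Once this reduction to a bounded interval is in place, the contradiction with a multiple zero persisting for all times is immediate.
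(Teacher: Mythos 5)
Your proof is correct and is exactly the argument the paper has in mind: the paper states Lemma~\ref{zero2} without proof as ``an easy application of Lemma~\ref{zero1},'' and your localization to an interval $I$ with non-vanishing boundary values, followed by the strict drop of $\mathcal{Z}_I$ at every time forced by the persistent multiple zero at $x_0$, is precisely that application. The care you take to make $\mathcal{Z}_I$ finite and to verify the boundary hypothesis of the ``Furthermore'' clause is the right way to handle the possibility of infinitely many zeros on all of $\R$.
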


\begin{lemma}\label{zeroequiv}
Let $w\in C^{2,1} (\R\times (t_1,t_2))$ be a solution of \eqref{zeroeq}. Suppose that there exists $t_0\in (t_1,t_2)$ such that $w(\cdot,t_0)\equiv 0$. Then $w\equiv 0$.
\end{lemma}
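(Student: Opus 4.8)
\medskip
\noindent\textbf{Proof proposal.}
The statement is a forward--backward uniqueness assertion for the linear parabolic equation \eqref{zeroeq}, so the plan is to handle the two time directions separately. For $t\ge t_0$ there is nothing new: the conclusion $w\equiv 0$ on $\R\times[t_0,t_2)$ is just uniqueness for the Cauchy problem of \eqref{zeroeq} with zero data at $t=t_0$, which (since $c$ is bounded, and $w$ is bounded in every situation where this lemma gets applied) follows from the maximum principle for $\pm w$ together with the usual cut-off argument on $\R$. The content of the lemma is therefore the backward part, and I would argue by contradiction. Suppose $w\not\equiv 0$. By the forward part, the closed set $\{t\in(t_1,t_2):w(\cdot,t)\equiv 0\}$ must be of the form $[t^\ast,t_2)$; if $t^\ast=t_1$ we are done, so $t^\ast>t_1$, and after renaming $t_0:=t^\ast$ I may assume that $w(\cdot,t)\not\equiv 0$ for every $t\in(t_1,t_0)$ while $w(\cdot,t_0)\equiv 0$. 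Since $w\in C^{2,1}$ and $t_0$ is interior, $w(\cdot,t)\to 0$ in $C^2_{loc}(\R)$ as $t\to t_0^-$.

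Next I would exploit the nodal structure of $w(\cdot,t)$ for $t$ just below $t_0$. By Lemma \ref{zero1}(ii) the map $t\mapsto\mathcal{Z}(w(\cdot,t))$ is nonincreasing, and it is $\ge 0$ because $w(\cdot,t)\not\equiv 0$ on $(t_1,t_0)$; hence it equals a constant $N\in\{0,1,2,\dots\}\cup\{\infty\}$ on some interval $(t_0-\delta,t_0)$. Assume first $N<\infty$. Then $\mathcal{Z}(w(\cdot,t))$ has no jump on $(t_0-\delta,t_0)$, so Lemma \ref{zero1}(iii) forbids degenerate zeros there (and Lemma \ref{zero2} forbids a persistent one); consequently $w(\cdot,t)$ has exactly $N$ simple nodal points $\xi_1(t)<\cdots<\xi_N(t)$ depending continuously on $t$, which delimit sign-definite ``nodal cells''. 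Whatever the limiting behaviour of the $\xi_j(t)$ as $t\to t_0^-$ (convergence to finite points, collisions, or escape to $\pm\infty$), at least one of these cells keeps a fixed sign, say $w>0$ on it, and contains a \emph{fixed} compact interval $K$ for all $t$ sufficiently close to $t_0$. Applying the parabolic Harnack inequality to the positive solution $w$ on a subcylinder $B\times(t_0-\delta',t_0)$ with $K\Subset B$ inside that cell gives a constant $C$ with $\sup_K w(\cdot,s')\le C\inf_K w(\cdot,s'')$ for $t_0-\delta'<s'<s''<t_0$; letting $s''\to t_0^-$ and using $w(\cdot,t_0)\equiv 0$ sends the right-hand side to $0$, so $w(\cdot,s')\le 0$ on $K$ --- contradicting $w>0$ there. (When $N=0$ the cell is all of $\R$ and this is also immediate from the strong maximum principle, which forces $w(\cdot,t_0)>0$.) If $N=\infty$, I would repeat the argument with $\mathcal{Z}$ replaced by $\mathcal{Z}_I$ for a finite interval $I$ whose endpoints are not zeros of $w$ accumulating at $t_0$; in any case this situation does not arise in our applications, where $w$ is a difference of two solutions one of which has compactly supported initial data, so that $\mathcal{Z}(w(\cdot,t))<\infty$ for $t>0$.

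The step I expect to be genuinely delicate is the backward one, and within it the book-keeping needed to follow a single nodal cell up to the final time $t_0$ and to make the Harnack comparison rigorous; this is exactly the classical backward-uniqueness phenomenon for linear parabolic equations, so an equally valid route would be to invoke backward uniqueness directly through the logarithmic-convexity (Agmon--Nirenberg) method for bounded $w$ with bounded $c$. The forward direction and the initial reduction are routine.
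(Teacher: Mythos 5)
Your proposal reconstructs backward uniqueness from scratch, but the paper treats this lemma as an immediate corollary of Lemma \ref{zero1}: by part (i) of that lemma (Angenent's theorem), if $w\not\equiv 0$ then \emph{for every} $t\in(t_1,t_2)$ all zeros of $w(\cdot,t)$ are isolated; since $w(\cdot,t_0)\equiv 0$ makes every point of $\R$ a non-isolated zero of $w(\cdot,t_0)$, the conclusion $w\equiv 0$ follows in one line. All of the backward-uniqueness content is already packaged inside the quoted result, which is why the paper offers no separate argument. Your instinct that the real substance is backward uniqueness (and that log-convexity would settle it for bounded $w$) is correct, but you have bypassed the tool the paper hands you.

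Taken as a standalone proof, your argument has a genuine gap at exactly the step you flag as delicate. You need a \emph{fixed} cylinder $B\times(t_0-\delta',t_0)$ on which $w$ keeps one sign in order to apply Harnack, and you obtain it by asserting that "at least one nodal cell contains a fixed compact interval $K$ for all $t$ sufficiently close to $t_0$." The nodal curves $\xi_1(t)<\dots<\xi_N(t)$ are continuous on the open interval $(t_0-\delta,t_0)$ but need not have limits as $t\to t_0^-$: a priori they can oscillate with unbounded amplitude or escape to $\pm\infty$, and then no cell need contain a time-independent interval. Some argument (e.g.\ a monotonicity or barrier statement controlling the nodal curves, or a maximum-principle argument showing a positivity component cannot be "capped" by the slice $\{t=t_0\}$) is required here, and none is given; without it the Harnack comparison has no domain to live on. Secondly, the case $N=\infty$ cannot be dismissed by appeal to "our applications": the lemma is stated and invoked in the paper as a general fact about solutions of \eqref{zeroeq}, and your proposed repair via $\mathcal{Z}_I$ collides with the hypothesis in Lemma \ref{zero1} that $w$ never vanish on $\partial I$ throughout the time interval --- not something you can arrange for free when $w(\cdot,t)\to 0$ everywhere as $t\to t_0^-$. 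Either complete these steps, or (better) simply cite Lemma \ref{zero1}(i), or invoke a backward-uniqueness theorem directly as you suggest in your last sentence.
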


One can check that $\mathcal{Z}$ is semi-continuous with respect to pointwise convergence, that is, the pointwise convergence $w_n(x)\to w(x)$ implies 
\begin{equation}\label{semi-continuous}
w\equiv 0 \quad \hbox{or} \quad \mathcal{Z}(w)\leq \liminf_{n\to\infty}\mathcal{Z}(w_n).
\end{equation}

We also recall the following property which will be used frequently later. The proof can be found in \cite{dm}. 

\begin{lemma}\label{zero3}
Let $w_n(x,t)$ be a sequence of functions converging to $w(x,t)$ in $C^1(I\times (t_1,t_2))$, where $I$ is an open finite interval in $\R$. Assume that 
for each $t\in(t_1,t_2)$ and $n\in\N$, the function $x\mapsto w_n(x,t)$ has only simple zeros in $I$, and that $w(x,t)$ satisfies an equation of the form \eqref{zeroeq} on $ I\times (t_1,t_2)$. Then for every $t\in(t_1,t_2)$, either $w\equiv 0$ on $I$ or $w(x,t)$ has only simple zeros on $I$. 
\end{lemma}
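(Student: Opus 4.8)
The plan is to argue by contradiction, using the equation \eqref{zeroeq} only to force a \emph{strict} drop of the zero number of $w$ across a time at which $w$ has a multiple zero, and then to contradict that drop by the elementary fact that, on a fixed interval with nonvanishing endpoint values, the number of (simple) zeros of each $C^1$ function $w_n$ cannot change with $t$.

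Suppose the conclusion fails, so that there are $\bar t\in(t_1,t_2)$ and $\bar x\in I$ with $w(\cdot,\bar t)\not\equiv0$ on $I$ and $w(\bar x,\bar t)=w_x(\bar x,\bar t)=0$. Since $w\not\equiv0$, by Lemma \ref{zero1}(i) the zeros of $w(\cdot,\bar t)$ are isolated, so I can choose $a<\bar x<b$ with $[a,b]\subset I$ and $w(a,\bar t)w(b,\bar t)\neq0$; by continuity there is $\delta_0>0$ with $[\bar t-\delta_0,\bar t+\delta_0]\subset(t_1,t_2)$ and $w(a,t)w(b,t)\neq0$ on this window. On the window $w$ never vanishes on $\partial(a,b)$, so Lemma \ref{zero1} applies to $t\mapsto\mathcal{Z}_{(a,b)}(w(\cdot,t))$: it is finite-valued (the zeros of $w(\cdot,t)$ lie in a compact subset of $(a,b)$ and are isolated), nonincreasing in $t$, and, by part (iii) with $\bar x\in(a,b)$ a multiple zero at time $\bar t$, it drops strictly across $\bar t$. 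An integer-valued nonincreasing function has only finitely many jump points in $[\bar t-\delta_0,\bar t+\delta_0]$, and, again by part (iii), $w(\cdot,t)$ has only simple zeros in $(a,b)$ at every non-jump time. Hence I would fix $\delta\in(0,\delta_0)$ with $\bar t\pm\delta$ not jump points; then $w(\cdot,\bar t\mp\delta)$ have only simple zeros in $(a,b)$, and, because a jump (at $\bar t$) lies strictly in between,
\[ k_-:=\mathcal{Z}_{(a,b)}\big(w(\cdot,\bar t-\delta)\big)\;>\;\mathcal{Z}_{(a,b)}\big(w(\cdot,\bar t+\delta)\big)=:k_+ . \]

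Next I would transfer these counts to the $w_n$. Since $w_n\to w$ in $C^1$ on $[a,b]\times[\bar t-\delta_0,\bar t+\delta_0]$ and $w(\cdot,\bar t-\delta)$ has exactly $k_-$ zeros in $(a,b)$, all simple, with $w\neq0$ at $a$ and $b$, a routine argument ($w_n$ has one simple zero near each simple zero of $w$, because $w_{n,x}$ is close to $w_x\neq0$ there, and none elsewhere, because $|w|$ is bounded below away from its zeros) gives $\mathcal{Z}_{(a,b)}(w_n(\cdot,\bar t-\delta))=k_-$ for all large $n$, and likewise $\mathcal{Z}_{(a,b)}(w_n(\cdot,\bar t+\delta))=k_+$ and $w_n(a,t)w_n(b,t)\neq0$ for $t\in[\bar t-\delta,\bar t+\delta]$. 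The key claim is then that, for each such $n$, the map $t\mapsto\mathcal{Z}_{(a,b)}(w_n(\cdot,t))$ is \emph{constant} on $[\bar t-\delta,\bar t+\delta]$: by hypothesis $w_n(\cdot,t)$ has only simple zeros for every $t$, so $w_{n,x}\neq0$ at every zero of $w_n$ in $(a,b)\times[\bar t-\delta,\bar t+\delta]$, and the implicit function theorem presents the zero set of $w_n$ there as a disjoint union of $C^1$ graphs $x=\phi(t)$; such a graph cannot terminate in the interior (which would force $w_{n,x}=0$) and cannot reach $x=a$ or $x=b$ (where $w_n\neq0$), hence is defined on the whole window, so the number of zeros of $w_n(\cdot,t)$ in $(a,b)$ — equal to the number of its sign changes, since all zeros are simple — does not depend on $t$. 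Combining, $k_-=\mathcal{Z}_{(a,b)}(w_n(\cdot,\bar t-\delta))=\mathcal{Z}_{(a,b)}(w_n(\cdot,\bar t+\delta))=k_+$ for large $n$, contradicting $k_->k_+$; this proves the lemma.

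I expect the step needing the most care to be the constancy claim in the last paragraph: it is the only place where the hypothesis that each $w_n(\cdot,t)$ has only simple zeros is used, and one must argue carefully that, with the endpoints $a,b$ held fixed and $w_n$ nonvanishing there, no zero of $w_n$ can be created, destroyed, or escape as $t$ varies. Everything else is standard zero-number bookkeeping for the solution $w$ together with elementary consequences of $C^1$ convergence. (It is worth stressing that the $w_n$ are not assumed to solve any equation, so their zero number is not a priori monotone in $t$; it is precisely the simple-zero hypothesis that restores the needed rigidity.)
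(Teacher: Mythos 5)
Your argument is correct, and it is essentially the proof of this lemma: the paper itself does not reprove it but refers to \cite{dm}, where the argument is exactly this combination of (a) the strict drop of $\mathcal{Z}_{(a,b)}$ for $w$ across a degenerate zero via Lemma \ref{zero1}(iii), (b) transfer of the zero counts at $\bar t\pm\delta$ to $w_n$ by $C^1$-convergence, and (c) the constancy in $t$ of the zero number of $w_n$ on a window where all its zeros are simple and the endpoint values do not vanish. Your identification of step (c) as the delicate point, and your implicit-function-theorem justification of it, match the intended reasoning.
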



\subsection{Properties of bounded solution of \eqref{E} at finite $t$}
Let $u_0\in L^{\infty}(\R)$ be a nonnegative and compact support function. We define its support ${\rm spt}(u_0)$ as the smallest closed set $A\subset \R$ such that $u_0=0$ a.e. in $\R\setminus A$. Throughout this paper, we use the notation $[{\rm spt}(u_0)]$ to denote the convex hull of ${\rm spt}(u_0)$, and put 
\begin{equation}\label{support}
[{\rm spt}(u_0)]:=[L_1,L_2]
\end{equation}
for some $L_1<L_2$.

\begin{lemma}\label{monotone}
Let $u(x,t)$ be the bounded solution of \eqref{E} with initial function $u_0$. Then 
\begin{equation*}
u_x>0\,\hbox{ for } \, x<L_1,\,t>0;\quad u_x<0\,\hbox{ for } \, x>L_2,\,t>0. 
\end{equation*}
Consequently, $\|u(\cdot,t)\|_{L^{\infty}(\R)}=\|u(\cdot,t)\|_{L^{\infty}([L_1,L_2])}$ for all $t\geq 0$. 
\end{lemma}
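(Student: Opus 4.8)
The plan is to prove the two monotonicity assertions by the moving-plane / reflection method combined with the zero-number machinery of Section 2.1, exploiting the fact that $u_0$ is nonnegative with $[\mathrm{spt}(u_0)]=[L_1,L_2]$. Fix $\lambda\leq L_1$ and introduce the reflection $u^{\lambda}(x,t):=u(2\lambda-x,t)$, which solves the same equation \eqref{equation}. Consider the difference $w^{\lambda}(x,t):=u(x,t)-u^{\lambda}(x,t)$ on the half-line $x>\lambda$; it satisfies a linear equation of the form \eqref{zeroeq} with bounded coefficient $c(x,t)=\int_0^1 \partial_u f\bigl(t,\tau u+(1-\tau)u^{\lambda}\bigr)\,d\tau$, and on the boundary $x=\lambda$ one has $w^{\lambda}(\lambda,t)\equiv 0$. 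At $t=0$, for $\lambda\leq L_1$ and $x>\lambda$, the reflected point $2\lambda-x$ lies to the left of $L_1$, hence outside $\mathrm{spt}(u_0)$, so $u_0(2\lambda-x)=0\leq u_0(x)$; thus $w^{\lambda}(\cdot,0)\geq 0$ on $(\lambda,\infty)$, and it is not identically zero (otherwise $u_0\equiv 0$, a trivial case).

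First I would run the zero-number argument on the interval $I=(\lambda,\infty)$: since $w^{\lambda}$ vanishes on the boundary point $x=\lambda$ and (by the behavior at $|x|\to\infty$, where $u$ and $u^{\lambda}$ both stay bounded and in fact the comparison at spatial infinity can be made precise) the hypotheses of Lemma 2.1 apply, $\mathcal Z_I(w^{\lambda}(\cdot,t))$ is finite and nonincreasing. Because $w^{\lambda}(\cdot,0)\geq 0$, we have $\mathcal Z_I(w^{\lambda}(\cdot,0))=0$; combined with the strong maximum principle and Lemma 2.3-type arguments, for $t>0$ either $w^{\lambda}\equiv 0$ on $I$ or $w^{\lambda}(\cdot,t)>0$ on $I$ with $w^{\lambda}_x(\lambda,t)>0$ by Hopf's lemma. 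The identically-zero alternative would force $u(\cdot,t)$ to be symmetric about every $\lambda\leq L_1$, which is impossible for a bounded nonnegative solution with nontrivial compactly supported data; hence $w^{\lambda}_x(\lambda,t)>0$ for all $t>0$, i.e. $u_x(\lambda,t)=\tfrac12\partial_x w^{\lambda}(\lambda,t)>0$. Letting $\lambda$ range over $(-\infty,L_1]$ — or more carefully, applying the same argument with $\lambda=x<L_1$ directly — gives $u_x(x,t)>0$ for all $x<L_1$, $t>0$. The assertion for $x>L_2$ follows by the mirror-image argument using reflections $\lambda\geq L_2$. Finally, the consequence $\|u(\cdot,t)\|_{L^\infty(\mathbb R)}=\|u(\cdot,t)\|_{L^\infty([L_1,L_2])}$ is immediate: $u(\cdot,t)$ is nonnegative (by the maximum principle, since $f(t,0)=0$ and $u_0\geq 0$), strictly increasing on $(-\infty,L_1)$ and strictly decreasing on $(L_2,\infty)$, so its supremum is attained on $[L_1,L_2]$.

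The main obstacle I anticipate is the behavior of $w^{\lambda}$ at spatial infinity, which is needed to legitimately invoke the infinite-interval version of the zero-number lemma (Lemma 2.1) and to rule out spurious sign changes accumulating at $|x|=\infty$. Since $u$ is only assumed bounded, $u^{\lambda}$ is a translated-and-reflected copy, so $w^{\lambda}$ need not decay; one must either argue that $\mathcal Z_{(\lambda,\infty)}(w^{\lambda}(\cdot,0))=0$ is preserved using the boundary-vanishing-at-$\lambda$ clause of Lemma 2.1 together with a localization/truncation on large finite intervals $(\lambda,R)$ and a limit $R\to\infty$, or invoke the remarks in \cite{dm,dgm} that extend Angenent's theory to unbounded intervals precisely in this reflection setting. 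A secondary (minor) point is ensuring $u\geq 0$ and that the nontrivial case $u_0\not\equiv 0$ is the only one to treat; both are standard consequences of the maximum principle and \eqref{0state}. Once the zero-number count is controlled, the rest is the classical Hopf-lemma conclusion of the moving plane method.
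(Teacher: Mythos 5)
Your proposal is correct and follows essentially the same route as the paper, which simply invokes ``a simple reflection argument (see \cite[Lemma 2.1]{dm})'' and omits the details. One remark: the zero-number machinery (and the attendant worry about sign changes accumulating at $|x|=\infty$) is unnecessary here --- since $w^{\lambda}$ is bounded, satisfies a linear equation with bounded coefficient, vanishes on $\{x=\lambda\}$ and is nonnegative and nontrivial at $t=0$, the Phragm\'en--Lindel\"of form of the maximum principle on the half-line plus the strong maximum principle and Hopf's lemma already give $w^{\lambda}>0$ and $\partial_x w^{\lambda}(\lambda,t)=2u_x(\lambda,t)>0$ directly.
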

\begin{proof}
This lemma is proved by a simple reflection argument (see e.g., \cite[Lemma 2.1]{dm}) and we omit the details.
\end{proof}

\begin{lemma}\label{largex}
Let $u(x,t)$ be the bounded solution of \eqref{E} with initial function $u_0$. Then 
\begin{equation}\label{eqlargex}
\lim_{|x|\to\infty}u(x,t)=0\, \hbox{ uniformly in } t\in [0,\tau] \hbox{ for each } \tau>0.   
\end{equation}
\end{lemma}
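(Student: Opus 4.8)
\textbf{Proof plan for Lemma \ref{largex}.}

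The plan is to construct a comparison supersolution that decays to $0$ at spatial infinity, uniformly over the compact time interval $[0,\tau]$. First I would record two facts: the solution $u$ is bounded, say $0\le u\le M$ on $\R\times[0,\infty)$, and by \eqref{assume-lipschitz} (or even just local Lipschitz continuity together with $f(t,0)=0$) one has a linear bound $f(t,u)\le \lambda u$ for all $t\in\R$ and $0\le u\le M$, where $\lambda:=\sup\{\partial_u f(t,s):t\in\R,\ 0\le s\le M\}<\infty$ by $T$-periodicity and continuity. Hence on the region where $0\le u\le M$, the function $v:=\me^{-\lambda t}u$ satisfies $v_t\le v_{xx}$, i.e. $v$ is a subsolution of the heat equation, and it suffices to show $v(x,t)\to0$ as $|x|\to\infty$ uniformly on $[0,\tau]$.

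Next I would handle the initial data. Since $u_0$ is bounded with $[\,{\rm spt}(u_0)\,]=[L_1,L_2]$ as in \eqref{support}, and since $u(\cdot,t)\to u_0$ in the sense described in the introduction, one can fix $t_1\in(0,\tau)$ small and use Lemma \ref{monotone}: for $t\in[t_1,\tau]$ we already know $u_x>0$ for $x<L_1$ and $u_x<0$ for $x>L_2$, so $u(\cdot,t)$ is monotone outside $[L_1,L_2]$; thus it is enough to bound $u(x_n,t)$ along any sequence $x_n\to+\infty$ (the case $x_n\to-\infty$ is symmetric). Alternatively, and more cleanly, I would bypass the initial-data subtlety by working on the time interval $[t_1,\tau]$ and invoking the heat-kernel representation: for $t\in[t_1,\tau]$,
\begin{equation*}
v(x,t)\le \int_{\R} G(x-y,t-t_1)\,v(y,t_1)\,dy,
\end{equation*}
where $G$ is the Gaussian heat kernel and $v(\cdot,t_1)\le M$ is a bounded, continuous function which (again by Lemma \ref{monotone} and parabolic smoothing at time $t_1>0$) is monotone outside $[L_1,L_2]$ and tends to $0$ at $\pm\infty$. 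Splitting the integral over $|y|\le R$ and $|y|>R$ and using $\int_\R G\,dy=1$, standard Gaussian tail estimates give $v(x,t)\to0$ as $|x|\to\infty$ uniformly for $t\in[t_1,\tau]$; combining with the trivial bound $v(x,t)\le M$ on $[0,t_1]$ is not quite enough, so instead one repeats the argument starting from time $0$ using the distributional initial condition, which is legitimate since the mild-solution formula $u(x,t)=\int_\R G(x-y,t)u_0(y)\,dy+\int_0^t\!\int_\R G(x-y,t-s)f(s,u(y,s))\,dy\,ds$ holds and $u_0$ has compact support.

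The cleanest route, which I would adopt, is the explicit supersolution: let $w(x,t)$ solve the heat equation with initial datum $\chi(x):=M$ on $[L_1,L_2]$ and $0$ outside, and set $\bar u(x,t):=\me^{\lambda t}w(x,t)$. Then $\bar u_t-\bar u_{xx}-f(t,\bar u)\ge \bar u_t-\bar u_{xx}-\lambda\bar u=0$, so $\bar u$ is a supersolution; since $u_0\le\chi$ a.e., the comparison principle yields $u\le\bar u$ on $\R\times[0,\tau]$. But $w(x,t)=\int_{L_1}^{L_2}G(x-y,t)M\,dy\to0$ as $|x|\to\infty$ uniformly on $[0,\tau]$ by an elementary Gaussian estimate (for $|x|$ large and $t\le\tau$, $|x-y|\ge|x|-\max(|L_1|,|L_2|)$ for all $y\in[L_1,L_2]$, so $w(x,t)\le M(L_2-L_1)(4\pi t)^{-1/2}\me^{-(|x|-C)^2/4t}$, and $s\mapsto s^{-1/2}\me^{-a^2/4s}$ is bounded on $(0,\tau]$ with bound $\to0$ as $a\to\infty$). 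Hence $0\le u(x,t)\le \me^{\lambda\tau}w(x,t)\to0$ as $|x|\to\infty$ uniformly in $t\in[0,\tau]$, which is \eqref{eqlargex}.

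The only mildly delicate point — the main obstacle, such as it is — is the justification of the comparison principle on the unbounded domain $\R$ with merely $L^\infty$ initial data: one needs $u$ bounded (given) and the supersolution bounded on $\R\times[0,\tau]$ (clear, since $0\le w\le M$), which are exactly the hypotheses under which the Phragmén–Lindelöf version of the maximum principle for bounded solutions of parabolic equations applies; the initial inequality $u_0\le\chi$ a.e. combined with $\lim_{t\to0}u(\cdot,t)=u_0$ a.e. and continuity of $\bar u$ up to $t=0$ suffices to start the comparison. This is entirely standard, and I would simply cite the relevant maximum principle (or refer to the analogous argument in \cite{dm}), so the lemma follows without further complications.
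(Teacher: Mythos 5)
Your final argument---comparing $u$ with $\bar u=\me^{\lambda t}w$, where $w$ solves the heat equation with the compactly supported datum $M\chi_{[L_1,L_2]}$ and $\lambda$ bounds $\partial_u f$ on the range of $u$ (finite by $T$-periodicity), together with the elementary Gaussian tail estimate and the Phragm\'en--Lindel\"of comparison principle for bounded solutions---is correct and is exactly the ``simple comparison argument with the heat equation'' that the paper invokes (citing \cite[Lemma 2.2]{dm}). The exploratory first two paragraphs are unnecessary; the explicit supersolution route you settle on is the whole proof.
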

\begin{proof}
The proof follows directly from a simple comparison argument and the fact that \eqref{eqlargex} holds for the solution of heat equation (see e.g., \cite[Lemma 2.2]{dm}). 
\end{proof}

\begin{lemma}\label{zeorperiod}
Let $u(x,t)$ be the bounded solution of \eqref{E} with initial function $u_0$ and $v(x,t)$ be a positive bounded entire solution of \eqref{ground}. Then 
\begin{equation*}
\mathcal{Z}(u(\cdot,t+T)-u(\cdot,t))<\infty \,\hbox{ for } t>0,
\end{equation*}
and 
\begin{equation*}
\mathcal{Z}(u(\cdot,t)-v(\cdot,t))<\infty \,\hbox{ for } t>0.
\end{equation*}
Moreover, both of them are nonincreasing in $t>0$. 
\end{lemma}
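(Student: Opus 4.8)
\textbf{Proof proposal for Lemma~\ref{zeorperiod}.}
The plan is to use Lemma~\ref{largex} together with the zero-number theory of Section~2.1. The key point is that for any bounded entire solution $w$ of an equation of the form \eqref{zeroeq} on $\R\times(t_1,t_2)$, the quantity $\mathcal{Z}(w(\cdot,t))$ can only drop from an initially finite value (Lemma~\ref{zero1}(ii)), so the real work is to establish finiteness of the zero number at \emph{some} time $t_1>0$, and for that it suffices to control the behavior as $|x|\to\infty$ and rule out accumulation of sign changes there.

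First I would treat $w(x,t):=u(x,t+T)-u(x,t)$. Since $u$ is bounded and solves \eqref{equation}, and since $f$ is $C^1$ in $u$, the difference $w$ satisfies a linear equation $w_t=w_{xx}+c(x,t)w$ with $c(x,t)=\int_0^1\partial_u f(t,\theta u(x,t+T)+(1-\theta)u(x,t))\,d\theta$ bounded on $\R\times[t_0,\infty)$ for any $t_0>0$; note that $c$ is bounded because $u$ is bounded and, by parabolic estimates, $u$ is smooth for $t>0$. Fix any $\tau>0$. By Lemma~\ref{largex} applied on the interval $[0,\tau+T]$ (and using $T$-periodicity of $f$, so that $u(\cdot,t+T)$ is the solution at time $t+T\le \tau+T$), both $u(x,t)\to 0$ and $u(x,t+T)\to 0$ uniformly for $t\in[0,\tau]$ as $|x|\to\infty$; hence $w(x,t)\to 0$ uniformly on $[0,\tau]$ as $|x|\to\infty$. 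Now pick $t_1\in(0,\tau)$ and a large $M>0$ so that $w(x,t_1)$ is small in absolute value for $|x|\ge M$; the subtle part is that smallness of $w$ alone does not preclude infinitely many sign changes, so I would instead argue at a slightly earlier time: by the strong maximum principle / Hopf-type argument of Lemma~\ref{monotone}, for $|x|$ large $u$ is monotone in $x$, which forces $w$ to be monotone in $x$ for $|x|$ large as well, hence $w(\cdot,t_1)$ has at most one zero in each of $(-\infty,-M]$ and $[M,\infty)$. On the compact interval $[-M,M]$, $w(\cdot,t_1)$ is continuous and, after possibly enlarging $t_1$ within $(0,\tau)$ so that $w$ is not identically zero there (if $w\equiv 0$ at some time we are done by Lemma~\ref{zeroequiv}, giving $u$ $T$-periodic and the statement trivially true), has isolated zeros by Lemma~\ref{zero1}(i), hence finitely many. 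Therefore $\mathcal{Z}(w(\cdot,t_1))<\infty$, and by Lemma~\ref{zero1}(ii) (the infinite-interval version, valid here since $w$ does not vanish identically and, for $|x|$ large, $w$ is monotone and nonvanishing) $t\mapsto\mathcal{Z}(w(\cdot,t))$ is finite and nonincreasing for $t\ge t_1$; since $\tau$ was arbitrary, the conclusion holds for all $t>0$.

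The argument for $w(x,t):=u(x,t)-v(x,t)$ is identical in structure: $w$ again solves a linear equation $w_t=w_{xx}+\tilde c(x,t)w$ with $\tilde c$ bounded (using boundedness of both $u$ and $v$ and the $C^1$ dependence of $f$ on $u$), and for $|x|$ large $u(x,t)$ is monotone by Lemma~\ref{monotone} while $v$ is bounded and positive; combined with $u(x,t)\to 0$ uniformly on compact $t$-intervals from Lemma~\ref{largex}, one gets that for $|x|$ large $w(\cdot,t)<0$ (since $v$ is bounded below away from $0$ on, say, $[L_1,L_2]$; more carefully, $v$ positive entire bounded implies $\inf v>0$ by a Harnack-type argument, or at least one uses that $u\to0$ while $v$ stays positive to get $w<0$ at the two ends). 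Hence $w(\cdot,t)$ has no sign changes outside a large compact set, and the same compact-interval isolated-zeros argument gives $\mathcal{Z}(w(\cdot,t_1))<\infty$ for suitable small $t_1$; monotonicity in $t$ then follows from Lemma~\ref{zero1}(ii).

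The main obstacle I anticipate is the step where one passes from ``$w$ is small / decaying at $|x|=\infty$'' to ``$w$ has only finitely many sign changes,'' because decay alone is not enough. The clean way around this, which I would adopt, is to exploit Lemma~\ref{monotone}: outside the convex hull $[L_1,L_2]$ of $\mathrm{spt}(u_0)$, $u(\cdot,t)$ is strictly monotone for every $t>0$, so both $u(\cdot,t+T)-u(\cdot,t)$ and $u(\cdot,t)-v(\cdot,t)$ are, for $|x|$ outside a fixed compact set, either strictly monotone or (in the second case, once $u<v$ there) strictly one-signed; this confines all sign changes to a fixed compact interval where finiteness is automatic from Lemma~\ref{zero1}(i). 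A secondary technical point is ensuring the coefficient functions $c,\tilde c$ are genuinely bounded: this requires invoking parabolic regularity so that $u$ (and $v$) are continuous hence locally bounded, together with the global boundedness hypotheses, and restricting to $t$ bounded away from $0$ — all of which is standard and I would dispatch briefly.
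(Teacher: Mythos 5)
Your overall strategy (establish finiteness of the zero number at some small time by confining all sign changes to a compact interval, then invoke Lemma~\ref{zero1}(ii) for the monotonicity in $t$) matches the paper's, and you correctly identify that decay of $w$ at $|x|=\infty$ alone is not enough. However, the step you use to confine the sign changes fails in both cases. For $w=u(\cdot,t+T)-u(\cdot,t)$ you claim that monotonicity of $u(\cdot,t)$ and $u(\cdot,t+T)$ in $x$ outside $[L_1,L_2]$ (Lemma~\ref{monotone}) ``forces $w$ to be monotone in $x$ for $|x|$ large'': it does not --- the difference of two strictly monotone functions need not be monotone, and nothing in Lemma~\ref{monotone} prevents $w$ from changing sign infinitely often as $|x|\to\infty$. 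For $w=u(\cdot,t)-v(\cdot,t)$ you rely on $\inf_{\R} v>0$ (via a ``Harnack-type argument''); this is false precisely for the solutions to which this lemma is later applied, e.g.\ the symmetrically decreasing $T$-periodic solutions based at $0$, which are positive but tend to $0$ as $|x|\to\infty$. Your fallback ``$u\to 0$ while $v$ stays positive'' gives nothing when $v$ also tends to $0$ at spatial infinity.

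The paper closes this gap by an exterior comparison argument that exploits the compact support of $u_0$: take $L>\max\{|L_1|,|L_2|\}$, so that $u(x,0)=u_0(x)=0<u(x,T)$ (resp.\ $<v(x,0)$) for all $|x|\ge L$, and note that on the lateral boundary one has $0<u(\pm L,t)<u(\pm L,t+T)$ (resp.\ $<v(\pm L,t)$) for $0<t\le\delta$ with $\delta$ small, since $u(\pm L,t)\to 0$ as $t\to 0^+$ while $u(\pm L,t+T)$ and $v(\pm L,t)$ stay uniformly positive on $[0,\delta]$. The comparison principle on $\{|x|\ge L\}\times(0,\delta]$ then forces $w$ to be strictly one-signed outside $[-L,L]$ for $0<t\le\delta$, so all sign changes lie in $[-L,L]$, where Lemma~\ref{zero1}(i) gives finiteness; Lemma~\ref{zero1} then yields the monotonicity of $\mathcal{Z}$ for all $t>0$. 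You would need to replace your monotonicity/Harnack step with this argument for the proof to go through.
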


\begin{proof}
We only prove the conclusions for $u(\cdot,t+T)-u(\cdot,t)$, as the proof of the other one is similar.
Since $f$ satisfies \eqref{tperiod}, $u(x,t+T)$ is also a solution of \eqref{equation}. Then $u(\cdot,t+T)-u(\cdot,t)$ solves a linear equation of form \eqref{zeroeq}, where $c(x,t)$ is bounded because of the uniform boundedness of $u$ and $f\in C^1([0,\infty))$ with respect to $u$ (the local Lipschitz continuity of $f(\cdot,u)$ is sufficient).

Let $L>\max\{|L_1|,|L_2|\}$ with $L_1$, $L_2$ given in \eqref{support}. We can choose $\delta>0$ sufficiently small such that 
$$0<u(\pm L, t)< u(\pm L, t+T) \,\hbox{ for } 0<t\leq \delta, $$
since $u(x,t)>0$ for $x\in\R,\,t>0$ by the strong parabolic maximum principle.  
Note that $u(x,0)<u(x,T)$ for all $|x|\geq L$. By comparison principle, we have
$$u(x,t)<  u(x,t+T) \quad \hbox{ for }  0<t\leq \delta,\, |x|\geq  L. $$
It then follows from Lemma \ref{zero1} that
$$ \mathcal{Z}(u(\cdot,t+T)-u(\cdot,t))<\infty \,\hbox{ for } 0<t\leq \delta,$$
and that $\mathcal{Z}(u(\cdot,t+T)-u(\cdot,t))$ is nonincreasing in $t>0$. The lemma is thus proved. 
\end{proof}


\subsection{Basic properties of $\omega$-limit solutions}

We recall that $\tilde{\omega}(u)$ is the set of all $\omega$-limit solutions of $u(x,t)$. Namely, for each $v\in \tilde{\omega}(u)$ and each $t\geq 0$,  we have 
\begin{equation}\label{limitsolu}
u(x,t+k_j T) \to v(x,t)  \ \ \ \hbox{as}\ \ k_j\to\infty \,\,\hbox{ in } L^{\infty}_{loc}(\R)
\end{equation}
for some sequence of positive integers $k_j$. In this subsection, we prove some basic properties of the set $\tilde{\omega}(u)$ by applying zero-number arguments. 
\vskip 5pt

We first show the following asymptotic symmetric property. 

\begin{lemma}\label{symmprop}
Let $L_1<L_2$ be given in \eqref{support}. Then there exists $x_0\in [L_1,L_2]$ such that any $v \in \tilde{\omega}(u)$ is symmetrically nonincreasing with respect to $x_0$, that is, for each $t\in\R$, 
\begin{equation}\label{symm}
v(x,t)=v(2x_0-x,t)\,\hbox{ for } x\in\R, 
\end{equation}
and 
\begin{equation}\label{sdec}
 v_x(x_0,t)=0,\quad  v_x(x_0,t)\geq 0 \,\hbox{ for } x<x_0,\quad v_x(x_0,t)\leq 0 \,\hbox{ for } x>x_0.
\end{equation}
\end{lemma}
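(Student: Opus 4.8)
The plan is to use the zero-number functional applied to $w_\mu(x,t) := u(x+\mu,t) - u(2x_0 - x + \mu, t)$ for suitable reflections, in the spirit of the moving-plane-type arguments of \cite{dm}, but carried out at the level of the discrete iterates $u(\cdot, kT)$. First I would recall from Lemma \ref{monotone} that $u_x > 0$ for $x < L_1$ and $u_x < 0$ for $x > L_2$, and from Lemma \ref{largex} that $u(\cdot,t) \to 0$ as $|x|\to\infty$ uniformly on bounded time intervals; these two facts force any $v \in \tilde\omega(u)$ to be nonincreasing in $|x|$ outside $[L_1,L_2]$ and to tend to its base as $|x|\to\infty$. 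The real content is the reflection symmetry about a \emph{single} $x_0$ that works simultaneously for every element of $\tilde\omega(u)$.

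The key device: for $\mu \in \R$ consider the reflection $R_\mu u(x,t) := u(2\mu - x, t)$, and set $w^\mu(x,t) := u(x,t) - R_\mu u(x,t)$, which solves a linear equation of the form \eqref{zeroeq} on the half-line $x > \mu$ (with $w^\mu(\mu,t) \equiv 0$), so the boundary version of Lemma \ref{zero1} applies with $I = (\mu,\infty)$. Because $u_0$ has compact support in $[L_1,L_2]$, for $\mu$ large one checks $\mathcal{Z}_{(\mu,\infty)}(w^\mu(\cdot,t))$ is finite and nonincreasing; then one defines $\mu_0$ as the infimum of those $\mu$ for which $\mathcal{Z}_{(\mu,\infty)}(w^\mu(\cdot,t))$ stays eventually zero (equivalently, $w^\mu \geq 0$ on $(\mu,\infty)$ eventually), and takes $x_0 := \mu_0$. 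One must show $\mu_0 \in [L_1, L_2]$ (using Lemma \ref{monotone} for one inequality and a contradiction via the strong maximum principle / zero-number drop at $\mu_0$ for the other). Passing to the limit along a subsequence $k_jT$ realizing a given $v\in\tilde\omega(u)$, and using the semicontinuity \eqref{semi-continuous} of $\mathcal{Z}$ together with Lemma \ref{zero3} to control loss of sign changes, one obtains $v(x,t) - v(2x_0 - x, t) \geq 0$ for $x > x_0$; applying the same argument to $-w$ (or exploiting that the same $x_0$ is forced from both sides, since reflecting from the far left gives the mirror bound) yields the equality \eqref{symm}. The monotonicity \eqref{sdec} then follows: $v_x(x,t) \le 0$ for $x > x_0$ comes from the nonnegativity of $w^\mu$ for all $\mu \ge x_0$ (a standard consequence — if $v$ is symmetric about every plane to the right of $x_0$ in the weak sense $v \ge R_\mu v$, it is nonincreasing on $(x_0,\infty)$), and $v_x(x_0,t) = 0$ is immediate from \eqref{symm} by differentiating.

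The step I expect to be the main obstacle is showing that one and the \emph{same} $x_0$ works for \emph{all} $v \in \tilde\omega(u)$ and for \emph{all} $t$, rather than a priori a different symmetry axis for each $\omega$-limit solution or each time slice. The way to handle this is to do the reflection analysis at the level of $u(x,t)$ itself (not the limits): the quantity $\mu_0$ is defined intrinsically from $u$, independent of which subsequence we later extract, and the zero-number monotonicity in $t$ guarantees the threshold behavior is eventually stable in $t$, so the same $\mu_0$ governs every $v\in\tilde\omega(u)$ and every $t\in\R$ after passing to the limit. A secondary subtlety is justifying the finiteness and the limiting behavior of $\mathcal{Z}_{(\mu,\infty)}$ as $\mu$ decreases through $[L_1,L_2]$ — here one combines the decay of $u$ at spatial infinity (Lemma \ref{largex}) to guarantee that $w^\mu(\cdot,t)$ has no sign changes far out, with Lemma \ref{zero1}(iii) to rule out a degenerate zero at $x = \mu_0$ persisting in time (which by Lemma \ref{zero2} would force $w^{\mu_0}\equiv 0$, i.e., exact symmetry of $u$ itself about $\mu_0$, a case that must be treated separately but is benign).
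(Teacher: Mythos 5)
Your strategy of pinning down a single axis $x_0$ intrinsically from $u$ itself (via reflections and zero-number monotonicity applied to $u$, not to the individual limit solutions) is the right way to get one $x_0$ that works simultaneously for every $v\in\tilde{\omega}(u)$ and every $t$, and your route to the one-sided monotonicity is in the spirit of the arguments of \cite{dm} that the paper invokes for exactly this step. The genuine gap is in your derivation of the exact symmetry \eqref{symm}. The right-moving plane produces a threshold $\mu_0$ with $v(\cdot,t)$ nonincreasing on $(\mu_0,\infty)$ and $v(x,t)\le v(2\mu-x,t)$ there for $\mu\ge\mu_0$; the left-moving plane produces a second threshold $\mu_1$ with the mirror properties. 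Your parenthetical claim that ``the same $x_0$ is forced from both sides'' is precisely what requires proof: a priori $\mu_1\ne\mu_0$, in which case $v$ is monotone on two half-lines that either overlap (forcing $v$ to be constant on an interval, which does not give symmetry) or leave a gap on which nothing is known; in neither case does \eqref{symm} follow from the two one-sided bounds alone.

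The paper closes this by a mechanism your proposal does not use. It first obtains, as the limit of the leftmost critical point of $u(\cdot,t)$, a point $x_0$ with $v_x(x_0,t)=0$ for \emph{all} $t\in\R$ together with $v_x\ge 0$ to the left of $x_0$. Then $\varphi(x,t):=v(x,t)-v(2x_0-x,t)$ solves a linear equation of the form \eqref{zeroeq} with bounded coefficient and satisfies $\varphi(x_0,t)=\varphi_x(x_0,t)=0$ for every $t$, so Lemma \ref{zero2} forces $\varphi\equiv 0$; this unique-continuation step is the essential ingredient for \eqref{symm}, and \eqref{sdec} then follows by combining \eqref{symm} with the single one-sided monotonicity (so only one moving-plane direction is ever needed). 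Note that in your last paragraph you apply Lemma \ref{zero2} to $u(x,t)-u(2\mu_0-x,t)$, which would wrongly assert exact symmetry of $u$ itself; the correct object is $v(x,t)-v(2x_0-x,t)$, for which the hypotheses of Lemma \ref{zero2} actually hold for all times.
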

\begin{proof}
It follows from exactly the same arguments as those used in \cite[Lemmas 2.8, 3.2]{dm} that, there exists $x_0\in [L_1,L_2]$ such that each $v \in \tilde{\omega}(u)$ satisfies 
\begin{equation}\label{peak}
v_x(x_0,t)=0,\quad  v_x(x_0,t)\geq 0 \,\hbox{ for } x<x_0,\,t\in\R.
\end{equation}
Indeed, $x_0$ is chosen as the limit position of the leftmost local maximum of the function $t\mapsto u(x,t)$, i.e., $$x_0:=\inf\big\{x\in\R^1:\, \lim_{t\to\infty} sgn(u_x(x,t))=-1 \big\},$$ 
where $sgn(w):=1,\,-1,\,0$ depending on whether $w>0$, $w<0$ or $w=0$. 

We now show that $v(x,t)$ is symmetric with respect to $x_0$ by considering the function
$$\varphi(x,t):=v(x,t)-v(2x_0-x,t)  \quad\hbox{ for } x\in\R,\,t\in\R.$$
Since both $v(x,t)$ and $v(2x_0-x,t)$ are bounded solutions of the same equation \eqref{ground}, $\varphi$ satisfies a linear parabolic equation of form \eqref{zeroeq} with bounded $c(x,t)$. Moreover, since $\varphi(x_0,t)=\varphi_x(x_0,t)=0$ for each $t\in\R$, it then follows from Lemma \ref{zero2} that $\varphi \equiv 0$, and hence \eqref{symm} holds. This together with \eqref{peak} immediately implies \eqref{sdec}. The proof of Lemma \ref{symmprop} is complete. 
\end{proof}

In what follows, let $x_0$ denote this point of symmetry described as in the above lemma. 

\begin{lemma}\label{lem:periodic-tangential}
Let $v_1$ be a $T$-periodic $\omega$-limit solutions of \eqref{E} and $v_2$ be a nonnegative entire solution of \eqref{ground} which is $T$-periodic in $t$ and symmetric with respect to $x_0$. Then 
\begin{equation}\label{two-periodic-solution}
\hbox{either}\quad v_1\equiv v_2 \quad\hbox{or}\quad  \mathcal{Z}(v_1(\cdot,t)-v_2(\cdot,t))<\infty \,\hbox{ for each } t\in\R.
\end{equation}
Moreover,  if $ v_1(x_0,t_0)=v_2(x_0,t_0)$ for some $t_0\in\R$, then $v_1\equiv v_2$.
\end{lemma}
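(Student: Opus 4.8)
I would prove Lemma~\ref{lem:periodic-tangential} by a standard zero-number argument applied to the difference $w(x,t):=v_1(x,t)-v_2(x,t)$, which solves a linear parabolic equation of the form \eqref{zeroeq} with bounded coefficient $c(x,t)$ (here one uses the uniform boundedness of both solutions together with the $C^1$-regularity of $f$ in $u$). The only subtlety compared with the earlier lemmas is that a priori $w(\cdot,t)$ might have infinitely many sign changes, so one cannot invoke Lemma~\ref{zero1}(iii) directly; this is where $T$-periodicity is essential.

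\smallskip
\emph{First step.} I would dispose of the case $\mathcal{Z}(w(\cdot,t))=\infty$ for all $t$. Suppose, towards the dichotomy \eqref{two-periodic-solution}, that $w\not\equiv 0$. By Lemma~\ref{zero1}(ii), $t\mapsto\mathcal{Z}(w(\cdot,t))$ is nonincreasing on $\R$; since $v_1$ and $v_2$ are both $T$-periodic in $t$, the function $w(\cdot,t)$ is $T$-periodic in $t$ as well, so $\mathcal{Z}(w(\cdot,t))=\mathcal{Z}(w(\cdot,t+T))$ for every $t$. A nonincreasing function that is also $T$-periodic must be constant; hence $\mathcal{Z}(w(\cdot,t))\equiv n$ for some fixed $n\in\N\cup\{0\}\cup\{\infty\}$. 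To rule out $n=\infty$, I would use the decay at infinity: by Lemma~\ref{largex} (or rather its analogue for $\omega$-limit solutions, which follows by passing to the limit in \eqref{limitsolu}) and the nonnegativity of $v_2$ together with the classification into spatially constant or symmetrically decreasing solutions, the solutions $v_1,v_2$ are controlled as $|x|\to\infty$; more robustly, I would observe that both $v_1$ and $v_2$ are symmetric with respect to $x_0$, hence so is $w$, so all the sign changes of $w(\cdot,t)$ in $|x|<R$ for any fixed $R$ come in symmetric pairs, and outside a large interval $w$ is controlled. Actually the cleanest route: apply Lemma~\ref{zeorperiod}-type reasoning on a finite interval is not available here since $v_1$ need not come from a Cauchy problem, so instead I would argue that if $\mathcal{Z}(w(\cdot,t_*))=\infty$ for one (hence all) $t_*$, then in particular $w(\cdot,t_*)$ has a sequence of zeros accumulating somewhere in $\R$, contradicting Lemma~\ref{zero1}(i) which guarantees all zeros are isolated. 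This forces $n<\infty$, giving the second alternative in \eqref{two-periodic-solution}.

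\smallskip
\emph{Second step.} For the "moreover" clause, suppose $v_1(x_0,t_0)=v_2(x_0,t_0)$ for some $t_0$, i.e. $w(x_0,t_0)=0$. Since both $v_1$ and $v_2$ are symmetric about $x_0$, so is $w(\cdot,t_0)$; an odd-reflection argument is not quite right, but symmetry gives $w_x(x_0,t)=0$ for all $t$ in the relevant range — indeed differentiating $w(x,t)=w(2x_0-x,t)$ at $x=x_0$ yields $w_x(x_0,t)\equiv 0$. Combined with $w(x_0,t_0)=0$, we have a space-time point $(x_0,t_0)$ where $w$ and $w_x$ both vanish. If $w\not\equiv 0$, Lemma~\ref{zero1}(iii) would then force $\mathcal{Z}(w(\cdot,t))>\mathcal{Z}(w(\cdot,s))$ for $t<t_0<s$, contradicting the constancy of $t\mapsto\mathcal{Z}(w(\cdot,t))$ established in the first step (using finiteness of $\mathcal{Z}$ from that step). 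Hence $w\equiv 0$, i.e. $v_1\equiv v_2$.

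\smallskip
\emph{Main obstacle.} The delicate point is the first step: ruling out $\mathcal{Z}(w(\cdot,t))\equiv\infty$. One wants to combine $T$-periodicity of $\mathcal{Z}(w(\cdot,\cdot))$ with the isolated-zeros property, but one must be careful that "infinitely many isolated zeros" is not itself a contradiction on all of $\R$ (it is not — $\sin x$ has infinitely many isolated zeros). So the genuine input must be the spatial decay / symmetry structure forcing $w(\cdot,t)\to$ (a limit) as $|x|\to\infty$, so that $w$ cannot oscillate infinitely often near infinity; together with finitely many zeros on any compact set (isolatedness plus no accumulation point, the latter needing a slightly finer argument or an energy/comparison estimate at spatial infinity), this yields $\mathcal{Z}(w(\cdot,t))<\infty$. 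I expect the paper handles this exactly as in the corresponding step of \cite[Lemma~2.8 or 3.2]{dm}, to which the proof will appeal; the remaining assembly with Lemma~\ref{zero1}(iii) is then routine.
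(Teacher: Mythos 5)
Your second step (the ``moreover'' clause) is essentially the paper's argument: once $\mathcal{Z}(v_1(\cdot,t)-v_2(\cdot,t))$ is known to be finite and eventually constant, the degenerate zero at $x_0$ produced by symmetry contradicts the drop forced by Lemma~\ref{zero1}(iii). The problem is your first step, and you have in fact diagnosed it yourself without repairing it. Ruling out $\mathcal{Z}(w(\cdot,t))=\infty$ for $w=v_1-v_2$ cannot be done via Lemma~\ref{zero1}(i): infinitely many sign changes on all of $\R$ is perfectly compatible with every zero being isolated (the zeros simply escape to infinity). Your fallback --- that spatial decay or symmetry of $v_1,v_2$ prevents infinite oscillation near infinity --- also does not close the gap: a difference of two functions each converging to a limit as $|x|\to\infty$ can still change sign infinitely often (and note that $v_2$ is only assumed symmetric and $T$-periodic, not monotone in $|x-x_0|$, so it need not even have a limit at infinity). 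What is needed is a \emph{definite sign} of the difference for large $|x|$, and nothing in your sketch produces one for $v_1-v_2$ directly.

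The paper's proof obtains finiteness by an entirely different mechanism, which is the idea you are missing: it never works with $v_1-v_2$ directly. After disposing of the trivial cases $v_1\equiv 0$ or $v_2\equiv 0$ (where nonnegativity gives $\mathcal{Z}\le 0$), it invokes Lemma~\ref{zeorperiod} for the \emph{Cauchy solution} $u$: since $u$ has compactly supported initial data while $v_2$ is a positive entire solution, one has $u(x,t)<v_2(x,t)$ for $|x|$ large and small $t>0$ by comparison, whence $\mathcal{Z}(u(\cdot,t)-v_2(\cdot,t))<\infty$ for all $t>0$ and this quantity is nonincreasing. Writing $v_1$ as the limit of $u(\cdot,t+k_jT)$ along the defining sequence and using the lower semicontinuity \eqref{semi-continuous} of $\mathcal{Z}$ under pointwise convergence then yields either $v_1\equiv v_2$ or $\mathcal{Z}(v_1(\cdot,t)-v_2(\cdot,t))\le\liminf_j\mathcal{Z}(u(\cdot,t+k_jT)-v_2(\cdot,t))<\infty$. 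In short, the hypothesis that $v_1$ is an $\omega$-limit solution of the compactly supported Cauchy problem is not decorative; it is precisely what supplies the finiteness that your direct argument cannot.
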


\begin{proof}
Since $v_1$ and $v_2$ are nonnegative, it is clear that all the conclusions hold if $v_1\equiv 0$ or $v_2\equiv 0$. Then we only need to consider the case where $v_1$ and $v_2$ are both positive. By Lemma \ref{zeorperiod}, we have
$$\mathcal{Z}(u(\cdot,t)-v_2(\cdot,t))<\infty \,\hbox{ for each } t>0.$$
In particular, there holds
$$\mathcal{Z}(u(\cdot,t+k_jT)-v_2(\cdot,t))<\infty \,\hbox{ for each } t>0,\,k_j\in\N,$$
where $\{k_j\}$ is the sequence of positive integers such that \eqref{limitsolu} holds with $v$ replaced by $v_1$. 
By the semi-continuity property \eqref{semi-continuous},  we immediately obtain \eqref{two-periodic-solution}. 

Next, we prove that if  $ v_1(x_0,t_0)=v_2(x_0,t_0)$ for some $t_0\in\R$, then $v_1\equiv v_2$.
Suppose the contrary that $v_1\not\equiv v_2$. It then follows from Lemma \ref{zero1} that $\mathcal{Z}(v_1(\cdot,t)-v_2(\cdot,t))$ is nonincreasing in $t\in\R$, and hence it is a constant for all large $t$. This implies 
all the zeros of $v_1(\cdot,t)-v_2(\cdot,t)$ are simple for all large $t$.  On the other hand, since $v_1$ and $v_2$ are $T$-periodic and symmetric with respect to $x_0$, we have
$$ v_1(x_0,t_0+kT)=v_2(x_0,t_0+kT) \quad\hbox{and}\quad \partial_x v_1(x_0,t_0+kT)=\partial_x v_2(x_0,t_0+kT)=0,$$
for every $k\in\Z$. Thus, $x_0$ is a degenerate zero of $v_1(\cdot,t_0+kT)-v_2(\cdot,t_0+kT)$, which is a contradiction. Therefore, there must hold $v_1\equiv v_2$. The proof of this lemma is complete.    
\end{proof}

\begin{lemma}\label{conorsym}
Let $v$ be any $\omega$-limit solution of \eqref{E}. Then either of the following holds:
\begin{itemize}
\item[{\rm (a)}]   
$v(x,t)$ is a symmetrically decreasing with respect to $x_0$; 
\item[{\rm (b)}] 
 $v=v(t)$ is spatially homogeneous.
\end{itemize}
\end{lemma}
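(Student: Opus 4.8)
The plan is to fix an arbitrary $v\in\tilde\omega(u)$ and, using the symmetry already established in Lemma~\ref{symmprop}, show that either $v_x(x,t)<0$ for all $x>x_0$ and all $t\in\R$ (case (a)), or $v$ coincides with its limit at infinity, i.e.\ $v\equiv p(t)$ for some solution $p$ of \eqref{ODE} (case (b)). By Lemma~\ref{symmprop} we already know $v(2x_0-x,t)=v(x,t)$ and $v_x(x,t)\le 0$ for $x>x_0$, $v_x(x_0,t)=0$; the only thing missing is to upgrade the weak inequality $v_x\le 0$ to a strict one on the half-line $x>x_0$, \emph{or} to conclude that $v$ is spatially constant. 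The natural tool is the zero-number machinery of Lemma~\ref{zero1} applied to the function $w(x,t):=v_x(x,t)$, which satisfies the linearized equation $w_t=w_{xx}+\partial_u f(t,v)\,w$, a linear parabolic equation of the form \eqref{zeroeq} with bounded coefficient (here one uses the uniform bound on $v$ and the $C^1$ regularity of $f$ in $u$).

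The key steps, in order, are as follows. First, apply the strong maximum principle / Hopf-type argument to $w=v_x$ on the quarter-space $\{x>x_0,\ t\in\R\}$: since $w\le 0$ there, if $w(x_1,t_1)=0$ for some $x_1>x_0$, $t_1\in\R$, then either $w\equiv 0$ on $\{x>x_0,\ t\le t_1\}$, which by the symmetry of $v$ forces $v$ to be spatially constant for $t\le t_1$ and hence (by Lemma~\ref{zeroequiv} applied to $v_x$, or by backward uniqueness) for all $t$, giving case (b); or $w<0$ on $\{x>x_0,\ t\le t_1\}$ and an interior-zero/tangency argument handles the behavior for $t>t_1$. Second, to deal with the possibility that $w$ vanishes only at some later time, one uses part (iii) of Lemma~\ref{zero1}: a degenerate zero of $w(\cdot,t)$ at an interior point strictly drops $\mathcal{Z}_I(w(\cdot,t))$; but on any bounded interval $I=(x_0,R)$ the count $\mathcal{Z}_I(w(\cdot,t))$ is bounded and eventually constant as $t\to\infty$, and by $T$-periodicity considerations (the solution $u$ is arbitrarily well approximated by time-translates, and $v$ itself need not be periodic but is an entire solution) one argues that a tangential zero at an interior point cannot persist; combined with the symmetry forcing $w(x_0,t)=0$ always, any additional interior zero would contradict the eventual constancy of the zero number — unless $w\equiv 0$, i.e.\ case (b). I would lean on the argument pattern already used in Lemma~\ref{lem:periodic-tangential}, where exactly this kind of contradiction (a forced degenerate zero at $x_0$ versus all-simple-zeros for large $t$) is exploited.

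Concretely, I expect the cleanest route is: suppose $v$ is \emph{not} spatially homogeneous; then $w=v_x\not\equiv 0$, so by Lemma~\ref{zero1}(ii)--(iii), $\mathcal{Z}_{(x_0,\infty)}(w(\cdot,t))$ is finite and nonincreasing, hence constant with all zeros simple for $t\ge T_0$; then a zero of $w$ at some interior $x_1>x_0$ would be simple, but running time forward and using the strong maximum principle on the region to the right of that zero curve (or a reflection in $x_1$ combined with the uniqueness Lemma~\ref{zero2}) yields $w\equiv 0$ there, a contradiction; therefore $w$ has no zero in $(x_0,\infty)$ for $t\ge T_0$, and since $v$ is an entire solution one pushes this back to all $t\in\R$ by the analogous backward argument or by invariance of $\tilde\omega(u)$ under the time-$T$ map. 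Thus $v_x<0$ on $\{x>x_0\}$, which is case (a).

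The main obstacle I anticipate is the \emph{backward-in-time} propagation of strict monotonicity: the strong maximum principle and the zero-number decay are naturally one-sided (forward) statements, so ruling out the scenario where $v_x$ is strictly negative for $t\ge T_0$ but develops an interior zero for some $t<T_0$ requires care. I would resolve this either by invoking that $\tilde\omega(u)$ is invariant under the time-$T$ flow (so the "eventual" behavior is in fact the behavior for all $t$, shifting $v$ by a large multiple of $T$ and passing to a limit), or by a direct backward-uniqueness argument: if $v_x(x_1,t_1)=0=\partial_x v_x(x_1,t_1)$ with $v_x\le 0$ nearby, Lemma~\ref{zero1}(iii) would force a strict drop in the zero number across $t_1$, incompatible with constancy after $t_1$, leaving $v_x\equiv 0$ as the only option. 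Either way the dichotomy (a)/(b) follows, and the symmetry from Lemma~\ref{symmprop} ensures that case (b) indeed gives a spatially homogeneous $v=v(t)$ solving \eqref{ODE}.
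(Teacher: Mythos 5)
Your core argument is correct, but it is genuinely different from the one in the paper. You work directly with $w=v_x$, which by Lemma \ref{symmprop} satisfies $w\le 0$ on $\{x>x_0\}\times\R$ and solves $w_t=w_{xx}+\partial_uf(t,v)w$ with bounded coefficient; if $w(x_1,t_1)=0$ at some $x_1>x_0$, this is an \emph{interior} maximum of $w$ on $(x_0,\infty)\times(-\infty,t_1+\delta)$, so the strong maximum principle gives $w\equiv 0$ on $(x_0,\infty)\times(-\infty,t_1]$; by symmetry $v(\cdot,t)$ is then constant for $t\le t_1$, and Lemma \ref{zeroequiv} applied to $v_x$ propagates this to all $t\in\R$, which is case (b). If no such zero exists, $v_x<0$ on $\{x>x_0\}\times\R$ and you are in case (a). That is a complete proof, and arguably shorter than the paper's. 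The paper never touches $v_x$: it forms $\tilde\varphi(x,t)=u(x,t)-u(x+2x_0-2y_0,t)$, uses heat-kernel bounds to show $\tilde\varphi(\cdot,t)$ has a definite sign near $x=\pm\infty$ and hence finitely many, eventually all simple, zeros, then passes to the limit along $t+k_jT$ via Lemma \ref{zero3}; the degenerate zero of $v(\cdot,s_0)-v(\cdot+2(x_0-y_0),s_0)$ at $y_0$ (forced by symmetry) then makes this difference vanish identically, so $v(\cdot,s_0)$ is invariant under a nontrivial translation as well as symmetric, hence constant. The paper's route fits the template used throughout Section 2 (compare translates/time-shifts of $u$ itself and pass to the limit), and it only uses the degeneracy of the single zero at $y_0$ rather than the global one-signedness of $v_x$ on a half-line; your route buys brevity at the price of leaning entirely on that one-signedness from Lemma \ref{symmprop}, which is, however, exactly what that lemma provides.

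Three slips in your write-up are worth fixing, though none is fatal. First, the dichotomy in your first step is spurious: since $x_1>x_0$ is interior in space and you may take the time domain to extend past $t_1$, the strong maximum principle gives $w\equiv 0$ for $t\le t_1$ unconditionally; the alternative ``$w<0$ on $\{x>x_0,\ t\le t_1\}$'' contradicts $w(x_1,t_1)=0$ and never occurs. Second, an interior zero of $w=v_x\le 0$ can never be \emph{simple} (a simple zero forces a sign change); it is automatically degenerate, which is precisely why Lemma \ref{zero1}(iii) (or the maximum principle) applies — your sentence claiming it ``would be simple'' has the logic backwards. Third, your worry about finiteness and eventual constancy of $\mathcal{Z}_{(x_0,\infty)}(w(\cdot,t))$ is moot: a one-signed function has sign-change count $0$ (or $-1$ if it vanishes identically), so Lemma \ref{zero1}(iii) immediately forces $w(\cdot,s)\equiv 0$ for $s>t_1$ whenever a degenerate interior zero occurs at $t_1$; no $T$-periodicity or backward-propagation argument is needed.
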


\begin{proof}
Since $v$ is symmetrically nonincreasing with respect to $x_0$ by Lemma \ref{symmprop}, it suffices to show that if there exist $s_0\in\R$ and $y_0 \neq x_0$ such that $v_x(y_0,s_0)=0$, then $v$ is spatially homogeneous. Without loss of generality, we assume that $y_0>x_0$.  

We first prove $v(\cdot,s_0)$ is constant. Define
$$\tilde{\varphi}(x,t):= u(x,t)- u(x+2x_0-2y_0,t) \quad\hbox{for }\,t\geq 0,\,x\in\R.$$ 
It is easily seen that $\tilde{\varphi}(x,t)$ satisfies a linear parabolic equation of form \eqref{zeroeq} with bounded coefficient. 
 
We claim that $\tilde{\varphi}(\cdot,t)$ has only finite simple zeros for all large $t$. 
Since $f(t,0)\equiv 0$ and $u$ is bounded, there exists $M\geq 0$ such that $-Mu\leq f(t,u)\leq Mu$ for all $t\geq 0$. Therefore, 
$$\exp (-Mt) \bar{u}(x,t)\leq u(x,t)\leq \exp (Mt)\bar{u}(x,t) \,\, \hbox{ for } x\in\R,\, t\geq 0, $$
where $\bar{u}$ is the solution of $u_t=u_{xx}$ with initial function $u_0$.  As a consequence, for $x\in\R$, $t>0$, we have
\begin{equation*}
\begin{split}
\frac{u(x,t)}{u(x+2x_0-2y_0,t)}
\,&\, \geq \exp (-2Mt) \frac{\bar{u}(x,t)}{ \bar{u}(x+2x_0-2y_0,t)}\\
\,&\,=\exp (-2Mt) \frac{\displaystyle\int_{L_1}^{L_2} \hbox{exp}\Big(-\frac{(x-y)^2}{4t}\Big) u_0(y)dy}{\displaystyle\int_{L_1}^{L_2} \hbox{exp}\Big(-\frac{\big(x-y+2(x_0-y_0)\big)^2}{4t}\Big) u_0(y)dy}, 
\end{split}
\end{equation*}
where the last quantity tends to $\infty$ as $x\to-\infty$, due to $y_0>x_0$. And hence, 
\begin{equation*}
\frac{u(x,t)}{u(x+2x_0-2y_0,t)}\to \infty \quad \hbox{ as } x\to-\infty.
\end{equation*}
In a similar way, we obtain 
\begin{equation*}
\frac{u(x,t)}{u(x+2x_0-2y_0,t)}\to 0 \quad \hbox{ as } x\to\infty.
\end{equation*}
Then for each given $t_0>0$, there exists 
$L>0$ large enough such that 
$$\tilde{\varphi}(x,t_0) >0 \,\hbox{ for }  x\leq -L,\quad   \tilde{\varphi}(x,t_0) <0 \,\hbox{ for }  x\geq L. $$
This together with the fact that the zeros of $x\mapsto \tilde{\varphi}(x,t_0)$ do not accumulate in $\R$ implies 
$\mathcal{Z}(\tilde{\varphi}(\cdot,t_0))<\infty$. By Lemma \ref{zero1}, $\mathcal{Z}(\tilde{\varphi}(\cdot,t))$ is constant for all large $t$, and hence, 
$\tilde{\varphi}(\cdot,t)$ has only finite simple zeros in $\R$ for all large $t$. The proof of our claim is finished. 

Next, let $\{k_j\}_{j\in\N}$ be the sequence of positive integers such that \eqref{limitsolu} holds. 
By standard parabolic estimates, 
$$\tilde{\varphi}(x,t+k_jT)\,\to\, v(x,t)-v(x+2(x_0-y_0),t) \, \hbox{ as } j\to \infty \hbox{ in }\, C^1(I\times (t_1,t_2)), $$
where $I\subset \R$ is any finite interval containing $y_0$ and $(t_1,t_2)\subset \R$ is any finite interval containing $s_0$. In view of this and Lemma \ref{zero3}, we obtain 
\begin{equation*}
\hbox{ either }\,v(x,s_0)-v(x+2(x_0-y_0),s_0)\equiv 0 \,\hbox{ on } I
\end{equation*}
\begin{equation*}
\hbox{ or }\,v(x,s_0)-v(x+2(x_0-y_0),s_0) \,\hbox{ has only simple zeros on } I.
\end{equation*}
The latter is impossible, since by Lemma \ref{symmprop}, $x=y_0$ is a degenerate zero of $v(x,s_0)-v(x+2(x_0-y_0),s_0)$. Therefore, the former case happens. Furthermore, by the arbitrariness of finite interval $I$, we have $v(\cdot,s_0)\equiv  v(\cdot+2(x_0-y_0), s_0)$ on $\R$. 
Since $v(x,s_0)$ is symmetrically nonincreasing with respect to $x_0$,  we obtain $v(\cdot,s_0)$ is constant. 

Lastly, we show that $v=v(t)$ is spatially homogeneous by considering the function $v_x(x,t)$ over $(x,t)\in \R ^2$. Clearly, $v_x(x,t)$ satisfies
the following equation
\begin{equation*}
w_t=w_{xx}+\partial_v f(t,v)w \,\hbox{ for } x\in \R,\, t\in \R.
\end{equation*}
Note that $\partial_v f(t,v(x,t))$ is bounded because of the boundedness of $v$. Since $v_x(\cdot, s_0) \equiv 0$, it then follows from Lemma \ref{zeroequiv} that $v_x(\cdot,t)\equiv 0$ for all $t\in\R$, and hence $v=v(t)$ is spatially homogeneous.
The lemma is thus proved. 
\end{proof}

\begin{lemma}\label{omega-sol}
Let $v$ be any $\omega$-limit solution of \eqref{E}. Then either
of the following holds:
\begin{itemize}
\item[\rm (a)] $v(x,t)$ is a $T$-periodic solution of \eqref{ground};
\item[\rm (b)] for each $t\in\R$, $v(x_0,t+mT)$ is strictly monotone  
in $m\in\Z$, and $v(x,t+mT)$ converges to $T$-periodic solutions of \eqref{ground} as 
$m\to\pm\infty$ locally uniformly in $x\in\R$, $t\in\R$.
\end{itemize}
\end{lemma}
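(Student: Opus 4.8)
The plan is to treat the two alternatives for $v$ afforded by Lemma~\ref{conorsym}. If $v=v(t)$ is spatially homogeneous it is a bounded entire solution of \eqref{ODE}; since $t\mapsto v(t+T)$ solves the same ODE by $T$-periodicity of $f$, forward and backward uniqueness give either $v(t+T)\equiv v(t)$, which is case~(a), or else $v(t+T)-v(t)$ never vanishes and so keeps a fixed sign. In the latter case $v(x_0,t+mT)=v(t+mT)$ is strictly monotone in $m$ for each $t$; being bounded it converges, and by continuous dependence on initial data $v(\cdot+mT)$ converges locally uniformly to the solutions $h(\cdot;c_{\pm})$ of \eqref{odeinitial}, where $c_{\pm}:=\lim_{m\to\pm\infty}v(mT)$, and these are $T$-periodic since $h(T;c_{\pm})=c_{\pm}$ — this is case~(b). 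So from now on assume $v$ is symmetrically decreasing with respect to $x_0$; the core of the proof is to analyze the sequence of time-$T$ translates of $v$ via zero-number arguments.

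Put $\psi(x,t):=v(x,t+T)-v(x,t)$. By \eqref{tperiod}, $v(\cdot,\cdot+T)$ is again an entire solution of \eqref{ground}, so $\psi$ solves a linear equation of the form \eqref{zeroeq} with bounded coefficient. If $\psi\equiv0$ we are in case~(a); so assume $\psi\not\equiv0$, whence $\psi(\cdot,t)\not\equiv0$ for every $t$ by Lemma~\ref{zeroequiv}. Since $v(\cdot,t)$ is symmetric about $x_0$, so is $v(\cdot,t+T)$, hence $\psi(\cdot,t)$ is even about $x_0$ and $\psi_x(x_0,t)=0$ for all $t$. Using $u(\cdot,t+k_jT)\to v(\cdot,t)$ in $C^2_{loc}(\R)$ we get $u(\cdot,t+(k_j+1)T)-u(\cdot,t+k_jT)\to\psi(\cdot,t)$, so Lemma~\ref{zeorperiod} together with the semicontinuity \eqref{semi-continuous} shows that $\mathcal{Z}(\psi(\cdot,t))$ is finite for every $t$ and uniformly bounded; by Lemma~\ref{zero1}(ii) it is nonincreasing in $t$, hence constant for all large $t$ and for all very negative $t$. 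Furthermore, whenever $\psi(x_0,t_0)=0$ the relation $\psi_x(x_0,t_0)=0$ makes $x_0$ a multiple zero of $\psi(\cdot,t_0)$, so by Lemma~\ref{zero1}(iii) the map $t\mapsto\mathcal{Z}(\psi(\cdot,t))$ drops strictly across $t_0$; being finite and nonincreasing, it drops only finitely often, so $\psi(x_0,\cdot)$ vanishes at finitely many times only and has a fixed nonzero sign $\epsilon_{\pm}$ on the two tails $\{t\ge T_+\}$ and $\{t\le T_-\}$.

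Now telescoping gives $v(x_0,s+mT)-v(x_0,s)=\sum_{k=0}^{m-1}\psi(x_0,s+kT)$, and since $v$ is bounded the one-signed tails of this series are summable; hence $v(x_0,s+mT)$ converges as $m\to\pm\infty$ for every $s$, and $\psi(x_0,s)\to0$ as $|s|\to\infty$. By the parabolic estimates the translates $v(\cdot,\cdot+mT)$ are precompact in $C^{2,1}_{loc}(\R\times\R)$; any subsequential limit $W$ of $v(\cdot,\cdot+mT)$ as $m\to+\infty$ is an entire solution of \eqref{ground}, even about $x_0$, with $W(x_0,\cdot+T)-W(x_0,\cdot)=\lim\psi(x_0,\cdot+mT)=0$. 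Applying Lemma~\ref{zero3} to the translates $\psi(\cdot,\cdot+mT)$ (whose zeros are all simple once $\mathcal{Z}(\psi(\cdot,\cdot))$ has stabilized, by Lemma~\ref{zero1}(iii)) forces $W(\cdot,\cdot+T)-W$ to have only simple zeros unless it is identically $0$; since it is even about $x_0$ and vanishes at $x_0$ it cannot have only simple zeros, so $W(\cdot,\cdot+T)\equiv W$, i.e.\ $W$ is $T$-periodic. Two such subsequential limits agree at $x_0$ (because $v(x_0,mT)$ converges), so they coincide by Lemma~\ref{lem:periodic-tangential}; call the common limit $V_+$ and define $V_-$ analogously for $m\to-\infty$. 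Then $V_{\pm}$ are $T$-periodic solutions of \eqref{ground}, symmetric about $x_0$, and $v(\cdot,\cdot+mT)\to V_{\pm}$ in $C^{2,1}_{loc}(\R\times\R)$, which gives the convergence assertion in (b).

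The point I expect to be the main obstacle is to promote ``$\psi(x_0,\cdot)$ vanishes at finitely many times'' to ``$\psi(x_0,\cdot)$ never vanishes'', i.e.\ strict monotonicity of $m\mapsto v(x_0,t+mT)$ for \emph{every} $t$, not merely for $|t|$ large. Here I would bring in the limit profiles: by Lemma~\ref{zeorperiod} applied to $u$ and the (positive, unless trivially $0$) entire solution $V_+$, together with \eqref{semi-continuous}, $\mathcal{Z}(v(\cdot,t)-V_+(\cdot,t))$ is finite and nonincreasing in $t$; also $v(\cdot,t)-V_+(\cdot,t)$ is even about $x_0$, so its $x$-derivative vanishes there, while $v(x_0,t)-V_+(x_0,t)$ has the fixed sign $-\epsilon_+$ for $t$ large because $v(x_0,\cdot+mT)$ tends to $V_+(x_0,\cdot)$ monotonically; a symmetric statement holds for $v-V_-$ as $t\to-\infty$. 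One then argues that a zero of $\psi(x_0,\cdot)$ at some $s_0$ — near which the relation $\psi_t(x_0,s_0)=\psi_{xx}(x_0,s_0)$ fixes the local profile of $\psi(\cdot,s_0)$ about $x_0$ — is incompatible with the monotone approach of $v(x_0,\cdot+mT)$ to $V_{\pm}(x_0,\cdot)$ together with the combined zero-number budgets of $\psi$, $v-V_+$ and $v-V_-$ at the symmetry point $x_0$. Once this bookkeeping is carried out, $m\mapsto v(x_0,t+mT)$ is strictly monotone for every $t$ and case~(b) is established.
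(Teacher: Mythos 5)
There is a genuine gap at exactly the point you flag yourself: the strict monotonicity of $m\mapsto v(x_0,t+mT)$ for \emph{every} $t$, which is the substance of alternative (b). Your argument via Lemma \ref{zero1}(iii) only yields that $\mathcal{Z}(\psi(\cdot,t))$ drops at each time where $\psi(x_0,\cdot)$ vanishes, hence that such times are finitely many; the final ``bookkeeping'' with $V_\pm$ that is supposed to upgrade this to ``no such times at all'' is never carried out, and it is not clear it can be made to work in the form sketched. The decisive ingredient you are missing is that $\psi$ must not be treated as a free-standing entire solution: it is the $C^1_{loc}$-limit of $\psi_j(x,t)=u(x,t+k_jT+T)-u(x,t+k_jT)$, and by Lemma \ref{zeorperiod} combined with Lemma \ref{zero1} the functions $\psi_j(\cdot,t)$ have only \emph{simple} zeros for all large $j$ (because $\mathcal{Z}(u(\cdot,t+T)-u(\cdot,t))$ is finite and nonincreasing in $t>0$, hence eventually constant). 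Lemma \ref{zero3} then says that for each $t_0$ either $\psi(\cdot,t_0)\equiv 0$ or $\psi(\cdot,t_0)$ has only simple zeros. Since the symmetry of $v$ about $x_0$ makes $x_0$ a degenerate zero of $\psi(\cdot,t_0)$ whenever $\psi(x_0,t_0)=0$, a single vanishing of $\psi(x_0,\cdot)$ forces $\psi(\cdot,t_0)\equiv 0$, and then $\psi\equiv 0$ by Lemma \ref{zeroequiv}, i.e.\ $v$ is $T$-periodic — case (a). This dichotomy is what the paper uses; it gives ``never vanishes'' in one stroke and makes your tail analysis and the proposed interaction with $v-V_\pm$ unnecessary. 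You in fact already record the convergence $\psi_j\to\psi$ (you use it for the semicontinuity bound on $\mathcal{Z}(\psi)$), so the repair is to invoke Lemma \ref{zero3} on that same sequence rather than only Lemma \ref{zero1}(iii) on $\psi$ itself.

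The remainder of your argument (identification of the subsequential limits $V_\pm$ via their agreement at $x_0$ and the vanishing of the $x$-derivatives there, and the $T$-periodicity of the limits) is sound and close to the paper's, which uses Lemma \ref{zero2} directly on $V-\tilde V$ where you route through Lemma \ref{lem:periodic-tangential}; that difference is only stylistic. Your treatment of the spatially homogeneous case is also fine.
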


\begin{proof}
We assume that $v$ is not a $T$-periodic solution of \eqref{ground}, and show (b) occurs.

We first prove that for each $t\in\R$, $v(x_0,t+mT)$ is strictly monotone in $m\in \Z$. 
It suffices to show that either $v(x_0,t+T)>v(x_0,t)$ for each $t\in\R$ or $v(x_0,t+T)<v(x_0,t)$ for each $t\in\R$. 
Assume by contraction that neither of them holds. Then by the continuity of $v$, there exists $t_0\in\R$ such that 
\begin{equation}\label{degezero1}
v(x_0,t_0+T)=v(x_0,t_0).
\end{equation} 
Let $\{k_j\}_{j\in\N}$ be the sequence of positive integers such that \eqref{limitsolu} holds, and for each $j\in\N$, write 
$$\psi_j(x,t):=u(x,t+k_jT+T)-u(x,t+k_jT) \,\hbox{ for } t>0,\,x\in\R.   $$
By Lemma \ref{zeorperiod}, for each $j\in\N$ and $t>0$, $\mathcal{Z}(\psi_j(\cdot,t))<\infty$, 
and it is nonincreasing in $t>0$. Let $(t_1,t_2)\in (0,\infty)$ be a finite interval containing $t_0$. Then  $\mathcal{Z}(\psi_j(\cdot,t))$ is constant for all $t\in (t_1,t_2)$ when $k$ is sufficiently large. Consequently, $\psi_j(\cdot,t)$ has only simple zeros for all large $k$. Moreover, by standard parabolic estimates, we have
$$ \psi_j(x,t)\to v(x,t+T)- v(x,t)\,\hbox{ as } k_j\to\infty\, \hbox{ in } C^1_{loc}(\R\times (t_1,t_2)). $$
It then follows from Lemma \ref{zero3} that, either $v(\cdot,t_0+T)- v(\cdot,t_0)\equiv 0 $ on $\R$ or  $v(\cdot,t_0+T)- v(\cdot,t_0) $ has only simple zeros on $\R$. The latter is impossible since $x=x_0$ is a degenerate zero of $v(\cdot,t_0+T)- v(\cdot,t_0)$, due to \eqref{sdec} and \eqref{degezero1}. Therefore, 
$v(\cdot,t_0+T)\equiv v(\cdot,t_0)$ on $\R$. Furthermore, applying Lemma \ref{zeroequiv} to the linear equation satisfied by $v(\cdot,t+T)-v(\cdot,t)$, we obtain for each $t\in\R$,  
$$v(\cdot,t+T)\equiv v(\cdot,t) \,\hbox{ on } \R. $$
This contradicts our assumption that $v(x,t)$ is not $T$-periodic. Thus, $v(x_0,t+mT)$ is strictly monotone in $m\in \Z$. 

Next, we prove that $v(x,t+mT)$ converges to $T$-periodic solutions of \eqref{ground} as $m\to\pm\infty$ locally uniformly.  
We only consider the convergence as $m\to\infty$, as the proof for the other case is exactly the same. 
By standard parabolic estimates, there exists an entire solution $V(x,t)$ of \eqref{ground} such that 
\begin{equation*}
v(x,t+m_jT) \to V(x,t) \,\hbox{ as } m_j\to\infty \,\hbox{ in } C^{1}_{loc} (\R\times \R) 
\end{equation*}
for some sequence $\{m_j\}_{j\in\N}$ of positive integers. Suppose that there exists another subsequence $\{\tilde{m}_j\}_{j\in\N}\subset \N$ such that $v(x,t+\tilde{m}_jT)$ converges to another entire solution $\tilde{V}(x,t)$ of \eqref{ground} as $\tilde{m}_j\to\infty$ in $C^{1}_{loc} (\R\times\R)$. 
Since $v(x_0,t+kT)$ is strictly monotone in $k\in \Z$, it is clear that 
$$V(x_0,t)= \tilde{V}(x_0,t) \,\hbox{ for } t\in\R.$$
Moreover, in view of \eqref{sdec}, we have 
$$V_x(x_0,t)=\tilde{V}_x(x_0,t)=0 \,\hbox{ for } t\in\R.$$  
Then, by applying Lemma \ref{zero2} to the linear equation satisfied by $V(x,t)-\tilde{V}(x,t)$, we obtain $V\equiv \tilde{V}$. Therefore, the whole sequence $v(x,t+mT)$ converges as $m\to\infty$. Clearly, the limit function is $T$-periodic in $t$.  

Therefore, either (b) or (c) happens. The proof of Lemma \ref{omega-sol} is thus complete.
\end{proof}

Let $V_{\pm}(x,t)$ denote the limit functions of the sequence $v(x,t+mT)$ as $m\to\pm\infty$ if Case (b) in Lemma \ref{omega-sol} holds, that is,
\begin{equation}\label{omega-lim-pm}
v(x,t+mT) \to V_{\pm}(x,t) \, \hbox{ as } m \to\pm\infty  \,\hbox{ locally uniformly in } x\in\R,\,t\in\R.
\end{equation}
Since the set $\tilde{\omega}(u)$ is closed in $L^{\infty}_{loc}(\R\times\R)$, 
$V_{\pm}$ are also $\omega$-limit solutions of \eqref{E}. We end this section by showing the existence of another $\omega$-limit solution connecting $V_{\pm}(x,t)$, as stated in the following proposition.

\begin{prop}\label{hereconnect}
Let $v$ be any $\omega$-limit solution and suppose that $v$ is not $T$-periodic in $t$. 
If $v(x_0,t+mT)$ is increasing in $m\in\N$, then there exists another $\omega$-limit solution $w(x,t)$ of problem \eqref{E} such that $w(x_0,t+mT)$ is decreasing in $m\in\N$, and that 
\begin{equation}\label{omega-lim-mp}
w(x,t+mT) \to V_{\mp}(x,t) \, \hbox{ as } m \to\pm\infty  \,\hbox{ locally uniformly in } x\in\R,\,t\in\R,
\end{equation}
where $V_{\pm}(x,t)$ is given in \eqref{omega-lim-pm}. 

Similarly, if $v(x_0,t+mT)$ is decreasing in $m\in\N$, then there exists another $\omega$-limit solution $\tilde{w}(x,t)$ such that $\tilde{w}(x_0,t+mT)$ is increasing in $m\in\N$, and that \eqref{omega-lim-mp} holds with $w$ replaced by $\tilde{w}$. 
\end{prop}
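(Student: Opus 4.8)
The plan is to realize $w$ as a subsequential limit of time-$T$-translates $u(\cdot,\,\cdot+n_iT)$ where the integers $n_iT$ are chosen to ``catch'' the orbit of $u$ as it descends from near $V_+$ toward near $V_-$. First a reduction: it suffices to treat the case where $v(x_0,t+mT)$ is increasing in $m\in\N$, the other case being identical after interchanging $V_+$ and $V_-$. Since $v$ is not $T$-periodic, Lemma~\ref{omega-sol} gives that $v(x_0,t+mT)$ is strictly monotone in $m\in\Z$ with $v(\cdot,t+mT)\to V_\pm(\cdot,t)$ as $m\to\pm\infty$; writing $\alpha:=V_+(x_0,0)$ and $\beta:=V_-(x_0,0)$, strict monotonicity forces $\beta<v(x_0,0)<\alpha$. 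As $\tilde\omega(u)$ is closed, $V_\pm\in\tilde\omega(u)$ (as noted above), so there are integer sequences $p_i\to\infty$, $q_i\to\infty$ with $u(\cdot,t+p_iT)\to V_+(\cdot,t)$ and $u(\cdot,t+q_iT)\to V_-(\cdot,t)$ in $C^{1}_{loc}(\R\times\R)$; after extracting subsequences we may assume $p_1<q_1<p_2<q_2<\cdots$.

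For the construction, set $d:=\tfrac12(\alpha+\beta)$ and, for large $i$, let $n_i$ be the smallest integer $m$ with $p_i<m\le q_i$ and $u(x_0,mT)\le d$. This is well defined because $u(x_0,p_iT)\to\alpha>d$ and $u(x_0,q_iT)\to\beta<d$, and by minimality $u(x_0,mT)>d$ for $p_i\le m<n_i$. Using that $u(\cdot,t+(p_i+k)T)\to V_+(\cdot,t)$ for each fixed $k$ (by $T$-periodicity of $V_+$) one gets $n_i-p_i\to\infty$. Passing to a further subsequence, $u(\cdot,t+n_iT)$ converges in $C^{1}_{loc}(\R\times\R)$ to an entire solution $w$ of \eqref{ground}, and $w\in\tilde\omega(u)$ since $\tilde\omega(u)$ is closed. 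Letting $i\to\infty$ in $u(x_0,(n_i-k)T)>d$ (valid for $1\le k\le n_i-p_i$) and in $u(x_0,n_iT)\le d$ yields $w(x_0,-kT)\ge d$ for all $k\ge1$ and $w(x_0,0)\le d$.

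Next I would show $w$ is a strictly decreasing heteroclinic. By Lemma~\ref{symmprop}, $w$ is symmetric about $x_0$, and by Lemma~\ref{omega-sol}, $w$ is either $T$-periodic or $w(x_0,t+mT)$ is strictly monotone in $m$. The periodic alternative is excluded: with a small refinement of the choice of $n_i$ — e.g.\ taking $n_i$ to be the first period after $p_i$ at which the orbit of $u$ is at least as close to $V_-$ as to $V_+$ in $C^{1}_{loc}$ — the backward $T$-translates of $w$ stay close to $V_+$, and a $T$-periodic $\omega$-limit solution with that property must coincide with $V_+$ by the rigidity statement of Lemma~\ref{lem:periodic-tangential}, contradicting $w(x_0,0)\le d<\alpha$. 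Hence $w(x_0,t+mT)$ is strictly monotone, and $w(x_0,-T)\ge d\ge w(x_0,0)$ forces it to be strictly decreasing. Lemma~\ref{omega-sol}(b) then produces $T$-periodic $\omega$-limit solutions $W_\mp$ with $w(\cdot,t+mT)\to W_\mp(\cdot,t)$ as $m\to\pm\infty$ locally uniformly, so that $w(x_0,t+mT)$ is decreasing in $m\in\N$ as required.

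The remaining point — and, I expect, the genuine difficulty — is to identify the limits: $W_-\equiv V_+$ and $W_+\equiv V_-$, which is exactly \eqref{omega-lim-mp}. The first should follow by combining Lemma~\ref{lem:periodic-tangential} with a zero-number comparison: both $\mathcal Z\big(w(\cdot,t)-V_+(\cdot,t)\big)$ and $\mathcal Z\big(w(\cdot,t)-v(\cdot,t)\big)$ are finite and nonincreasing in $t\in\R$ (Lemmas~\ref{zero1} and~\ref{zeorperiod}, together with the semi-continuity property \eqref{semi-continuous}), which pins down the number and location of the sign changes of $w-V_+$ as $t\to\pm\infty$; since $w$, $v$, $V_+$, $V_-$ all have $x_0$ as their symmetry point, one forces a degenerate zero of $w-V_+$ at $x_0$ unless $W_-\equiv V_+$. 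For the other end, a symmetric comparison argument (or a mirror-image construction using the \emph{last} period before $q_i$ at which $u(x_0,\cdot)>d$, which controls the orbit near $V_-$) gives $W_+\equiv V_-$; equivalently, both identifications are immediate once one knows $V_\pm$ are isolated among $T$-periodic $\omega$-limit solutions. In summary, the construction of $w$ and its strict monotonicity are routine applications of the zero-number machinery of Section~2, and the hard part is determining \emph{which} two periodic solutions $w$ connects, for which one exploits the rigidity of $\omega$-limit solutions at the point $x_0$. Finally, when $v(x_0,t+mT)$ is decreasing, the whole argument applies verbatim with $V_+$ and $V_-$ interchanged, producing $\tilde w$.
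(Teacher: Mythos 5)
Your construction of $w$ (catching the orbit as it descends from near $V_+$ past a fixed level $d$) is essentially the paper's Step~2, and it does produce an $\omega$-limit solution with $w(x_0,-kT)\ge d\ge w(x_0,0)$. But the proposal has a genuine gap at exactly the two places you flag as delicate, and the same single missing ingredient causes both. The paper's proof begins with a separate Step~1: \emph{there is no positive $T$-periodic solution $\Phi$ of \eqref{ground}, symmetric about $x_0$, with $V_-(x_0,t_0)<\Phi(x_0,t_0)<V_+(x_0,t_0)$ for some $t_0$.} This is proved by observing that the non-periodic orbit $v(x_0,\cdot)$ sweeps continuously through every intermediate value, so it must equal $\Phi(x_0,\cdot)$ at some time; by symmetry $x_0$ is then a degenerate zero of $v(\cdot,\tilde t_0+n_0T)-\Phi(\cdot,\tilde t_0)$, and Lemma~\ref{zeorperiod} together with Lemmas~\ref{zero1}, \ref{zero3} and \ref{zeroequiv} forces $v\equiv\Phi$ up to a time shift, contradicting the non-periodicity of $v$. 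This nonexistence statement is what simultaneously (a) rules out that $w$ is itself $T$-periodic and (b) identifies $W_\pm$ with $V_\mp$ (since $W_\pm$ are symmetric $T$-periodic $\omega$-limit solutions trapped between $V_-$ and $V_+$ at $x_0$, Lemma~\ref{lem:periodic-tangential} reduces (b) to showing they cannot lie \emph{strictly} between).

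Your substitutes for this step do not work as stated. For (a): with your choice of $d$, a $T$-periodic $w$ would only be forced to satisfy $w(x_0,0)=d$, which is not yet a contradiction; and the proposed refinement (taking $n_i$ to be the first period at which the orbit is at least as close to $V_-$ as to $V_+$) only yields that the backward translates of $w$ are \emph{no farther} from $V_+$ than from $V_-$ --- it does not make them close to $V_+$, and Lemma~\ref{lem:periodic-tangential} (which requires actual contact at $x_0$) cannot be invoked from such an inequality. For (b): the claim that finiteness and monotonicity of $\mathcal Z(w(\cdot,t)-V_+(\cdot,t))$ ``pins down the location of the sign changes'' and ``forces a degenerate zero at $x_0$ unless $W_-\equiv V_+$'' is not an argument; and your closing remark that the identifications are immediate once $V_\pm$ are known to be isolated among periodic $\omega$-limit solutions is precisely the content of the paper's Step~1, which you have not proved. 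To repair the proposal you should insert that step; once it is in place, your construction and the monotonicity/limit analysis via Lemmas~\ref{symmprop}, \ref{omega-sol} and \ref{lem:periodic-tangential} go through essentially as in the paper.
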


\begin{proof}
We only give the proof for the case where $v(x_0,t+mT)$ is increasing in $m\in\N$, since the analysis for the other case is identical. In this case, it is clear that $V_-(x_0,t)<V_+(x_0,t)$ for $t\in\R$. 
For clarity, we divide the proof into two steps.

{\bf Step 1:} There exists no positive $T$-periodic solution $\Phi(x,t)$ of \eqref{ground} which is symmetric with respect to $x_0$ and satisfies 
\begin{equation}\label{conbeween}
V_-(x_0,t_0)<\Phi(x_0,t_0)<V_+(x_0,t_0)\, \hbox{ for  some } t_0\in\R. 
\end{equation}

Assume by contradiction that there exists such a solution $\Phi(x,t)$. Then by \eqref{omega-lim-pm}, \eqref{conbeween} and the continuity of the functions $\Phi$ and $v$, there exists $\tilde{t}_0\in [0,T)$ and $n_0\in\Z$ such that
$$v(x_0,\tilde{t}_0+n_0T)=\Phi(x_0,\tilde{t}_0).$$ 
Since $v(x,\tilde{t}_0+n_0T)$ and $\Phi(x,\tilde{t}_0)$ are symmetric with respect to $x_0$, this implies that $x=x_0$ is a degenerate zero of the function $v(\cdot,\tilde{t}_0+n_0T)-\Phi(\cdot,\tilde{t}_0)$.

By Lemma \ref{zeorperiod},  $\mathcal{Z}(u(\cdot,t)-\Phi(\cdot,t))<\infty$ and it is nonincreasing in $t>0$.  It then follows from Lemma \ref{zero1} that
$$V_j(x,t):=u(x,t+k_jT+n_0T)-\Phi(x,t) $$
has only finite simple zeros in $\R$ for all large $j\in \N $ and all $t\in [0,T)$, where $\{k_j\}_{j\in\N}$ is the sequence of positive integers such that \eqref{limitsolu} holds. Since 
$$ V_j(x,t) \to v(x,t+n_0T)-\Phi(x,t) \,\hbox{ as } k_j\to\infty \hbox{ in } C_{loc}^1(\R\times \R),$$
and since $x=x_0$ is a degenerate zero of $v(\cdot,\tilde{t}_0+n_0T)-\Phi(\cdot,\tilde{t}_0)$, it follows from Lemma \ref{zero3} that  $v(\cdot,\tilde{t}_0+n_0T)\equiv \Phi(\cdot,\tilde{t}_0)$. Furthermore, by Lemma \ref{zeroequiv}, we have $v(\cdot,t+n_0T)\equiv \Phi(\cdot,t)$ for each $t\in [0,T)$. This implies that $v(\cdot,t)$ is a $T$-periodic solution of \eqref{ground}, which is a contradiction with our assumption. The proof of Step 1 is complete.

{\bf Step 2: }Completion of the proof. 

Choose $a>0$ such that 
\begin{equation*}
V_-(x_0,0)<a<V_+(x_0,0). 
\end{equation*}
Due to \eqref{limitsolu} and \eqref{omega-lim-pm}, we can find a sequence $\{m_i\}_{i\in\N}\subset \Z$ with $m_i\to\infty$ as $i\to\infty$ such that 
\begin{equation}\label{auxi-a}
V_-(x_0,0)<u(x_0,(m_i+1)T)\leq a < u(x_0,m_iT)<V_+(x_0,0) 
\end{equation}
for each $i\in\N$. Then by parabolic estimates, there exists an $\omega$-limit solution $w(x,t)$ of \eqref{E} such that, up to extraction of some subsequence of $\{m_i\}_{i\in\N}$, there holds
\begin{equation*}
u(x,t+m_iT) \to w(x,t)  \ \ \ \hbox{as}\ \ m_i\to\infty \hbox{ locally uniformly in } x\in\R,\,t\in\R.
\end{equation*}
In view of \eqref{auxi-a}, we have
\begin{equation}\label{auxieqT}
V_-(x_0,0)\leq w(x_0,T)\leq a \leq w(x_0,0)\leq V_+(x_0,0) .  
\end{equation}
Clearly, $w\not\equiv V_{\pm}$. We further conclude that 
\begin{equation}\label{intermed}
V_-(x_0,t)< w(x_0,t)< V_+(x_0,t)\,\hbox{ for each } t\in\R.  
\end{equation}
Otherwise, there would exist some $s_0\in\R$ such that either $V_-(x_0,s_0)= w(x_0,s_0)$ or $V_+(x_0,s_0)= w(x_0,s_0)$. Assume without loss of generality that the former occurs. Then $x=x_0$ is a degenerate zero of $V_-(\cdot,s_0)- w(\cdot,s_0)$. By similar arguments as those used in Step 1, one would derive $V_-\equiv w$, which is impossible.

It then follows from Step 1 that $w(t,x)$ can not be $T$-periodic. This together with Lemma \ref{omega-sol} and \eqref{auxieqT} implies that for each $t\in\R$, $w(x_0,t+mT)$ is decreasing in $m\in\N$, and $w(x,t+mT)$ converges to $T$-periodic solutions of \eqref{ground} as $m\to\pm\infty$ locally uniformly in $(x,t)\in\R\times\R$. Denote the limit functions by $W_{\pm}$, respectively.  It is easily seen from \eqref{intermed} that
\begin{equation*}
V_-(x_0,t) \leq W_+(x_0,t)<W_-(x_0,t)\leq V_+(x_0,t)\quad \hbox{ for } \,t\in\R.
\end{equation*}
Now, to obtain \eqref{omega-lim-mp}, it suffices to show that $W_{\pm}\equiv V_{\mp}$. Since they are all $T$-periodic $\omega$-limit solutions of \eqref{E}, in view of Lemma \ref{lem:periodic-tangential}, we only need to prove $W_{\pm}(x_0,t)= V_{\mp}(x_0,t)$ for some $t\in\R$. 
Suppose the contrary that it does not hold. Then either
\begin{equation*}
V_-(x_0,t)<W_+(x_0,t)<  V_+(x_0,t)\, \hbox{ for all } t\in\R,
\end{equation*}
or 
\begin{equation*}
V_-(x_0,t)<W_-(x_0,t) <  V_+(x_0,t)\, \hbox{ for all } t\in\R
\end{equation*}
holds. Both are contradictions with the conclusion of Step 1. Thus, \eqref{omega-lim-mp} is obtained  and the proof of Proposition \ref{hereconnect} is complete. 
\end{proof}


\SE{Symmetrically decreasing periodic solutions of \eqref{ground}} 

In this section, we show some properties of the symmetrically decreasing $T$-periodic solutions of \eqref{ground}. In Section 3.1, we give a sufficient condition for the existence of such solutions on the entire real line $\R$. Section 3.2 is concerned with the existence of a finite-interval counterpart problem with Dirichlet boundary condition. These properties as well as the techniques developed in showing them are key ingredients to prove our main theorems later. Besides, they may be also of interest in their own. We emphasis that, throughout this section, the nonlinearity $f$ is only assumed to satisfy  \eqref{tperiod} and \eqref{assume-lipschitz}.


\subsection{A sufficient condition for the existence result}

We recall that $\mathcal{X}_{per}$ and $\mathcal{Y}_{per}$ are the subsets of $T$-periodic solutions of \eqref{ODE} defined in Section 1. In this subsection, we prove the following proposition.

\vskip 5pt

\begin{prop}\label{nonexistence-result}
Let $f$ satisfy \eqref{tperiod} and \eqref{assume-lipschitz} and $p\in \mathcal{X}_{per}$. If problem \eqref{ground} admits a symmetrically decreasing $T$-periodic solution $U(x,t)$ based at $p$, then $p\notin \mathcal{Y}_{per}$. 
\end{prop}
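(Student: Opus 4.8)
The plan is to argue by contradiction: suppose $p\in\mathcal{Y}_{per}$ and that $U(x,t)$ is a symmetrically decreasing $T$-periodic solution of \eqref{ground} based at $p$, and derive a contradiction using zero-number comparisons between $U$ and suitable small perturbations of $p$. Since $p\in\mathcal{Y}_{per}$, there is $\epsilon>0$ such that for every $a\in(p(0),p(0)+\epsilon)$ the ODE solution $h(t;a)$ is $T$-monotone nondecreasing. The key observation is that, because $U$ is symmetrically decreasing with base $p$, we have $U(x,t)>p(t)$ for all $x$ (the solution lies strictly above its base everywhere, since it decreases to $p(t)$ as $|x|\to\infty$ and is not identically $p$; here one uses the strong maximum principle applied to $U-p$, which solves a linear equation). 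In particular $U(x_0,0)>p(0)$, and so we may pick $a\in(p(0),p(0)+\epsilon)$ with $a<U(x_0,0)$ — in fact, by taking $a$ close to $p(0)$, arrange $a$ below $\min_t$ of... no: simply pick any such $a$.

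First I would set up the comparison. Consider $w(x,t):=U(x,t)-h(t;a)$, regarding $h(t;a)$ (extended periodically or just on $t\ge 0$ — better to work on $t\in\mathbb{R}$ after noting $h$ stays bounded and the relevant inequalities) as a spatially homogeneous subsolution or solution of the same linear-type equation. Since both $U$ and $h(\cdot;a)$ solve $v_t=v_{xx}+f(t,v)$ (the second with $v_{xx}=0$), $w$ solves a linear parabolic equation of the form \eqref{zeroeq} with bounded coefficient $c(x,t)=\int_0^1\partial_uf(t,\theta U+(1-\theta)h)\,d\theta$. Because $U(x,t)\to p(t)<a\le h(t;a)$ as $|x|\to\infty$ uniformly on compact $t$-intervals (using $T$-monotone nondecreasing: $h(t;a)\ge h(0;a)\cdot$... more carefully, $h(t;a)>p(t)$ for all $t\ge 0$ by the comparison principle since $a>p(0)$, and one checks $h(t;a)$ stays bounded away is not needed — what is needed is $p(t)<h(t;a)$), $w(\cdot,t)$ has exactly two sign changes for $|x|$ large-ish in the right configuration: $w<0$ near $x=\pm\infty$ and $w>0$ somewhere (at $x=x_0$ if $a<U(x_0,0)$, at time $0$), so $\mathcal{Z}(w(\cdot,0))\ge 2$, and by symmetry of $U$ plus spatial constancy of $h$, $w(\cdot,t)$ is even in $x-x_0$, hence $\mathcal{Z}(w(\cdot,t))$ is even, so it equals $2$ at time $0$ and the two zeros are simple and symmetric. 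Here the symmetric-decreasing structure forces the geometry.

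Next, I would track $\mathcal{Z}(w(\cdot,t))$ forward in time using Lemma \ref{zero1}, combined with the $T$-monotone nondecreasing behavior of $h(t;a)$. After one period, $h(T;a)\ge h(0;a)=a$, i.e. the base level of the homogeneous comparison function has moved up (weakly), while $U$ is exactly $T$-periodic. Iterating, $h(mT;a)$ is nondecreasing in $m$; if $h$ is genuinely $T$-monotone increasing it drifts strictly upward, and by the boundedness/comparison-principle structure it must converge to some $T$-periodic solution $\hat p\ge p$ of \eqref{ODE} with $\hat p(0)\ge a$; if $h$ is $T$-periodic, set $\hat p=h$. Passing $m\to\infty$ (along a subsequence, using parabolic estimates, $U(\cdot,\cdot+mT)=U$ since periodic), the limit of $w(\cdot,t+mT)$ is $U(x,t)-\hat p(t)$, and by the lower-semicontinuity of zero number \eqref{semi-continuous} together with monotonicity, $\mathcal{Z}(U(\cdot,t)-\hat p(t))\le 2$ still, but now $\hat p(0)\ge a$ may be comparable to or above $U(x_0,0)$. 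The contradiction I expect to extract is that at some point in this process a degenerate zero is created: since $U(x,t)-\hat p(t)$ is even in $x-x_0$ with $\partial_x=0$ at $x_0$, whenever the value $\hat p(0)$ crosses the level $U(x_0,0)$ one gets $U(x_0,t_*)=\hat p(t_*)$ together with $\partial_x(U-\hat p)(x_0,t_*)=0$, a degenerate zero, so Lemma \ref{zero1}(iii) forces $\mathcal{Z}$ to drop, and iterating this drop below $0$ (or below the parity-forced minimum of $2$) is impossible. The cleanest version: choose $a$ so close to $p(0)$ that $a<\min_{t}U(x_0,t)$... but $\min_t U(x_0,t)$ could equal $p$'s... no, $U(x_0,t)>p(t)$ strictly and both periodic continuous, so $\min_t(U(x_0,t)-p(t))>0$, hence such $a$ exists. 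Then for ALL $m$, if $h(mT;a)$ stays $<\min_t U(x_0,t)$ we keep $\mathcal{Z}=2$; but if $h$ drifts up to $\hat p$ with $\hat p(0)$ possibly larger, the first time the drift makes $h(t;a)=U(x_0,t)$ at some $x_0,t$ produces the degenerate zero and the $\mathcal{Z}$-drop — and since $\hat p(t)\le \sup_t U(x_0,t)$ cannot exceed $U$ at $x_0$ forever while being below it somewhere is contradictory with periodicity once a tangency forms... This tangency-forces-drop-forces-contradiction mechanism is the heart.

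The main obstacle will be making the limiting argument $m\to\infty$ rigorous while keeping precise control of where the zeros of $w(\cdot,t)$ sit — in particular ensuring the two zeros do not escape to $\pm\infty$ (which would let $\mathcal{Z}$ silently drop to $0$ without a parabolic-interior tangency and hence without contradiction). To handle this I would use Lemma \ref{largex}-type decay for $U$ toward $p$ at spatial infinity versus the strict separation $h(t;a)>p(t)$, pinning the zeros of $U-h$ into a fixed compact $x$-interval independent of $t$ and $m$ (since $U(x,t)<h(t;a)$ once $|x|\ge R$ for $R$ large uniform in $t$, because $U(x,t)\to p(t)$ and $h(t;a)-p(t)$ is bounded below by a positive constant on the periodic orbit). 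With the zeros confined to a compact set, any drop in $\mathcal{Z}$ must come from an interior tangency, and the parity constraint (even symmetry) plus the fact that $\mathcal{Z}\ge 0$ always gives the needed contradiction after finitely many drops. The step "Step 3 of the proof" referenced in Remark \ref{remark-convergence}(b) — estimates on $h(t;a)$ with respect to $a$, requiring \eqref{assume-lipschitz} — is presumably where one quantifies the drift of $h(mT;a)$ and ensures it actually reaches the tangency level rather than converging to something strictly below $\min_t U(x_0,t)$; reconciling "$h$ drifts but maybe not far enough" is the delicate point, and I expect the resolution is that if $h(mT;a)$ converges to a $T$-periodic $\hat p$ with $\hat p(0)<\min_t U(x_0,t)$, one then restarts with a new $a'\in(\hat p(0),\hat p(0)+\epsilon')$ — but $\hat p$ need not be in $\mathcal{Y}_{per}$, so instead one must use that $\mathcal{Y}_{per}$ is characterized by an $\epsilon$-neighborhood condition and bootstrap along the whole family of periodic orbits between $p$ and $U(x_0,\cdot)$, which is where the argument becomes technical.
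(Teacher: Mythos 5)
Your approach has a genuine gap, and it sits exactly where you flag it yourself at the end. The zero-number mechanism you build produces a contradiction only if the spatially homogeneous orbit $h(\cdot+mT;a)$ actually rises far enough to become tangent to $U(x_0,\cdot)$, and nothing in the hypothesis $p\in\mathcal{Y}_{per}$ forces this. Two failure modes: (i) if $h(\cdot;a)$ is $T$-periodic for every $a\in(p(0),p(0)+\epsilon)$ --- a continuum of periodic orbits above $p$, which is precisely one of the degenerate situations the proposition must cover (cf.\ the combustion nonlinearity, where $\{q_{\lambda}(0)\}$ is a continuum) --- there is no drift at all: $U(\cdot,t)-h(t;a)$ keeps exactly two simple sign changes forever and no tangency ever forms. (ii) if $h(mT;a)$ increases to a periodic $\hat p$ with $\hat p(t)<U(x_0,t)$, the tangency again never occurs, and your proposed restart from $\hat p$ is not available: the $T$-monotone-nondecreasing hypothesis is assumed only for initial data near $p(0)$, not near $\hat p(0)$, and orbits just above $\hat p(0)$ may well be $T$-monotone decreasing, reversing the drift. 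A concrete test case that defeats the scheme is autonomous $f$ with $f\equiv 0$ on $[0,\epsilon]$: comparison of $U$ with the constants $a\in(0,\epsilon)$ yields a permanent, perfectly regular two-intersection pattern and no contradiction, yet the proposition asserts no such $U$ exists. (A secondary point: the ``parity-forced minimum of $2$'' is not correct --- parity only forces $\mathcal{Z}$ to be even, and a single drop from $2$ to $0$ at a tangency is consistent; the genuine contradiction at a tangency between two $T$-periodic functions comes from the drop having to recur every period, as in Lemma \ref{lem:periodic-tangential}.)

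The paper's proof uses an entirely different, more quantitative mechanism that is insensitive to whether the ODE orbits above $p$ drift, stall, or form a continuum. Near $x=-\infty$ one writes $U(x,t)=h(t;v(x,t))$, i.e.\ measures $U$ by its ``level'' $v$ along the ODE flow. Then $v$ satisfies the viscous Hamilton--Jacobi equation $v_t=v_{xx}+\frac{h_{aa}(t;v)}{h_a(t;v)}v_x^2$ with $\frac{h_{aa}}{h_a}\ge -M$ (this is where \eqref{assume-lipschitz} and Lemma \ref{derivative-estimates} enter), so $v$ is a supersolution of $w_t=w_{xx}-Mw_x^2$ on a half line, an equation that Cole--Hopf linearizes to the heat equation. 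The $T$-monotone nondecreasing property is used once per period to reset the comparison, $v(x,0)=h(T;v(x,T))\ge h(T;w(x,T))\ge w(x,T)$, giving $v(x,t)\ge w(x,nT+t)$ for all $n$; since the Dirichlet heat problem on the half line drives $w$ up to its boundary value $\delta_0>0$, one concludes $v\ge\delta_0$ everywhere, contradicting $v\to 0$ as $x\to-\infty$. You would need to abandon the zero-number framework and adopt something of this quantitative type to close the gap.
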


To show this proposition, we will construct a lower solution of the symmetrically decreasing periodic solution $U(x,t)$, but it cannot decay to any element of $\mathcal{Y}_{per}$ as $|x|\to\infty$. The proof is based on change of variables and the estimates in the following two lemmas.

We first show some estimates of the derivates of $h(t;a)$ with respect to $a$, where $h(t;a)$ is the solution of 
\eqref{odeinitial} with initial value $a$. Note that if $f(\cdot,u)$ is $C^2$ in $u$, it is well known that $h(t;a)$ is also a $C^2$ function in $a$. In our discussion below, we use $h_a(t;a)$ and $h_{aa}(t;a)$ to denote $\partial h / \partial a (t;a)$ and $\partial^2 h / \partial a^2 (t;a)$, respectively, if they exist.

\begin{lemma}\label{derivative-estimates}
Let $p_{\pm} \in \mathcal{X}_{per}$ satisfying $p_{-}<p_{+}$. Suppose that $f(\cdot,u)$ is a $C^2$ function in $u$. Then we have
\begin{equation}\label{esti-ha}
h_{a}(t;a)\geq M_1\,\,\hbox{ for } t\in [0,T], \, a\in [p_{-}(0),p_{+}(0)],
\end{equation}
and 
\begin{equation}\label{esti-haa}
-M_2\leq \frac{h_{aa}(t;a)}{h_a(t;a)} \leq M_2 \,\,\hbox{ for } t\in [0,T], \, a\in [p_{-}(0),p_{+}(0)],
\end{equation}
where $M_1$ and $M_2$ are two positive constants depending only on $T$ and 
\begin{equation}\label{defi-C1-C2}
C_1:=  \sup_{t\in [0,T],\, u\in[p_{-}(t),p_{+}(t)] } \big| \partial_{u} f(t,u)  \big|, \quad\quad C_2:= \sup_{t\in [0,T],\, u\in[p_{-}(t),p_{+}(t)] } \big| \partial_{uu} f(t,u)  \big|.
\end{equation}
\end{lemma}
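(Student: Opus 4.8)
The plan is to derive the bounds by writing down the linear variational equations that $h_a$ and $h_{aa}$ satisfy and then applying Gronwall-type estimates together with the fact that the coefficients are controlled by $C_1$ and $C_2$ uniformly on the relevant compact set. First I would note that since $p_-,p_+\in\mathcal{X}_{per}$ and $p_-<p_+$, the comparison principle gives $p_-(t)\le h(t;a)\le p_+(t)$ for all $t\in[0,T]$ and all $a\in[p_-(0),p_+(0)]$, so the trajectory of $h(\cdot;a)$ stays in the strip where $|\partial_u f|\le C_1$ and $|\partial_{uu}f|\le C_2$. Differentiating \eqref{odeinitial} in $a$, the function $y(t):=h_a(t;a)$ solves the linear ODE $y_t=\partial_u f(t,h(t;a))\,y$ with $y(0)=1$, whence
\begin{equation*}
h_a(t;a)=\exp\!\left(\int_0^t \partial_u f(s,h(s;a))\,ds\right),
\end{equation*}
which immediately yields $h_a(t;a)\ge e^{-C_1 T}=:M_1>0$ for $t\in[0,T]$ (and also the upper bound $h_a(t;a)\le e^{C_1 T}$, which will be used below).

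Next, differentiating once more in $a$, the function $z(t):=h_{aa}(t;a)$ satisfies the inhomogeneous linear ODE
\begin{equation*}
z_t=\partial_u f(t,h(t;a))\,z+\partial_{uu}f(t,h(t;a))\,\big(h_a(t;a)\big)^2,\qquad z(0)=0.
\end{equation*}
Rather than solving this directly, I would look at the ratio $r(t):=h_{aa}(t;a)/h_a(t;a)$, which is well defined since $h_a>0$. A short computation using the two variational equations gives $r_t=\partial_{uu}f(t,h(t;a))\,h_a(t;a)$ with $r(0)=0$, so
\begin{equation*}
\frac{h_{aa}(t;a)}{h_a(t;a)}=\int_0^t \partial_{uu}f(s,h(s;a))\,h_a(s;a)\,ds,
\end{equation*}
and therefore $\big|h_{aa}(t;a)/h_a(t;a)\big|\le C_2\,e^{C_1 T}\,T=:M_2$ for $t\in[0,T]$, which is \eqref{esti-haa}. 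Both $M_1$ and $M_2$ depend only on $T$, $C_1$ and $C_2$, as claimed.

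Two points deserve care rather than being genuine obstacles. The first is the regularity needed to justify differentiating twice in $a$: the statement assumes $f(\cdot,u)$ is $C^2$ in $u$ (and $f$ is locally Hölder in $t$, hence $\partial_u f$, $\partial_{uu}f$ are at least continuous along trajectories), which by the standard smooth-dependence-on-initial-data theorem guarantees $h\in C^2$ in $a$ and legitimizes the variational equations above; I would simply invoke this. The second, and the only part that requires a word of justification, is that all the integrands are evaluated at points $(s,h(s;a))$ lying in the compact strip $\{(s,u):s\in[0,T],\ p_-(s)\le u\le p_+(s)\}$, so the suprema defining $C_1$, $C_2$ in \eqref{defi-C1-C2} are genuinely the relevant bounds; this is exactly what the comparison-principle step at the start secures. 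So the main (mild) obstacle is bookkeeping — making sure the variational identities are applied on the correct invariant region — rather than any substantive difficulty; the estimates themselves are immediate from the explicit integral formula for $h_a$ and the ratio computation for $h_{aa}/h_a$.
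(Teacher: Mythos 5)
Your proof is correct, and it rests on the same foundation as the paper's: confine the trajectory to the strip $p_-(t)\le h(t;a)\le p_+(t)$ by comparison, then exploit the first and second variational equations with coefficients controlled by $C_1$ and $C_2$. The difference is in how the second estimate is extracted. The paper bounds $h_a$ and $h_{aa}$ separately by comparing each variational equation with explicit auxiliary linear ODEs ($dh_\pm/dt=\pm C_1h_\pm$ and $d\tilde h_\pm/dt=\pm C_1\tilde h_\pm\pm C_2\me^{2C_1T}$), obtaining two-sided bounds on $h_{aa}$ itself and then passing to the quotient. You instead solve the first variational equation in closed form, $h_a=\exp\bigl(\int_0^t\partial_uf(s,h(s;a))\,ds\bigr)$, and observe that the ratio $r=h_{aa}/h_a$ satisfies the decoupled equation $r_t=\partial_{uu}f(t,h(t;a))\,h_a$ with $r(0)=0$ (a computation that checks out: the $\partial_uf\,h_{aa}/h_a$ terms cancel), giving $|r|\le C_2\me^{C_1T}T$ directly. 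Your route is slightly cleaner for the quantity that is actually needed: the paper states the bound on $h_{aa}/h_a$ but its displayed constant $M_2$ is really a bound on $h_{aa}$ alone, and one must still divide by the lower bound $\me^{-C_1T}$ of $h_a$ to conclude; your ratio ODE bypasses that step entirely. Both arguments yield constants depending only on $T$, $C_1$, $C_2$, so either serves the later applications equally well.
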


\begin{proof}
By the differentiability of the solutions of \eqref{odeinitial} with respect to initial values (see, e.g. \cite{cl}), it is clear that $h_a(t;a)$ and $h_{aa}(t;a)$ are, respectively, the solutions of
\begin{equation}\label{differential-1} 
\frac{d h_a}{dt}=\partial_uf(t,h(t;a))h_a \,\,\hbox{ for } \,0<t\leq T, \quad h_a(0)=1,
\end{equation}
and 
\begin{equation}\label{differential-2} 
\frac{d h_{aa}}{dt}=\partial_{uu}f(t,h(t;a))h_a^2+\partial_uf(t,h(t;a))h_{aa}\,\, \hbox{ for } \,0<t\leq T, \quad h_{aa}(0)=0.
\end{equation}
By a simple comparison argument applied to \eqref{odeinitial} and \eqref{differential-1}, for each $a\in [p_{-}(0),p_{+}(0)]$, we have 
$$p_-(t) \leq h(t;a)\leq p_+(t) \,\,\hbox{ for } t\in [0,T],  $$
and 
$$h_-(t) \leq h_a(t;a)\leq h_+(t) \,\,\hbox{ for } t\in [0,T],  $$
where $h_{\pm}(t)$ are solutions of 
$$ \displaystyle\frac{d h_{\pm}}{dt}=\pm C_1h_{\pm}\,\,  \hbox{ for } 0<t\leq T, \quad h_{\pm}(0)=1. $$
It then easily calculated that 
$$\exp(-C_1t) \leq  h_a(t;a) \leq \exp(C_1t)  \,\,\hbox{ for } t\in [0,T], \, a\in [p_{-}(0),p_{+}(0)]. $$
This in particular gives the estimate \eqref{esti-ha} by setting $M_1:=\exp(-C_1T)$. 

Similarly, applying comparison principle to \eqref{differential-2},  we obtain  
$$\tilde{h}_-(t) \leq h_{aa}(t;a)\leq \tilde{h}_+(t)  \,\,\hbox{ for } t\in [0,T], \, a\in [p_{-}(0),p_{+}(0)]. $$
where $\tilde{h}_{\pm}(t)$ are solutions of 
$$\frac{d \tilde{h}_{\pm}}{dt}=\pm C_1\tilde{h}_{\pm} \pm C_2\exp( 2C_1T) \,\,  \hbox{ for } 0<t\leq T, \quad \tilde{h}_{\pm}(0)=0. $$
Thus, there holds
 $$
\frac{C_2\exp( 2C_1T)}{C_1}\Big(\exp(-C_1t)-1\Big)\leq  h_{aa}(t;a) \leq  \frac{C_2\exp( 2C_1T)}{C_1}\Big(\exp(C_1t)-1\Big)
$$
for $t\in [0,T]$, $a\in [p_{-}(0),p_{+}(0)]$. This immediately yields \eqref{esti-haa} by choosing 
$$M_2:=\max\Big\{\frac{C_2\exp( 2C_1T)}{C_1}\Big(1-\exp(-C_1T)\Big),\frac{C_2\exp( 2C_1T)}{C_1}\Big(\exp(C_1T)-1\Big)   \Big\}.$$
The proof of Lemma \ref{derivative-estimates} is thus complete. 
\end{proof}

\vskip 5pt

In the proof of Proposition \ref{nonexistence-result} below, by a change of variable, we will write the symmetrically decreasing periodic solution $U(x,t)$ as a composition of a solution of \eqref{odeinitial} and a solution of a semilinear parabolic equation. The following lemma will be used in the lower estimates of the solution of the semilinear equation as $t\to\infty$.

\begin{lemma}\label{estimate-trans-heat}
Let $M>0$, $\delta_0>0$ and $l_0\in \R$ be given constants and let $w_0\in L^{\infty}((-\infty,l_0])$ satisfy $w_0(x)=0$ for all large negative $x$. Then we have 
\begin{equation}\label{w-to-delta0}
\lim_{t\to\infty} w(x,t) =\delta_0 \,\,\hbox{ locally uniformly in } x\in (-\infty,l_0],
\end{equation}
where $w(x,t)$ is the solution of 
\begin{equation}\label{subsolu-M}
\left\{\baa{ll}
\smallskip   w_t= w_{xx}-M w_x^2, & \hbox{ for } \,x<l_0,\,t>0, \vspace{3pt}\\ 
\smallskip w(x,0)= w_0(x), & \hbox{ for }\,x\leq l_0, \vspace{3pt}\\
w(l_0, t)=\delta_0, & \hbox{ for }\, t\geq 0.\eaa\right.
\end{equation}
\end{lemma}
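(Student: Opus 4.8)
\textbf{Proof proposal for Lemma \ref{estimate-trans-heat}.}

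The plan is to reduce the degenerate (quadratic-gradient) equation \eqref{subsolu-M} to a linear one via the Cole--Hopf-type substitution $v = \frac{1}{M}\bigl(1 - \me^{-Mw}\bigr)$, or equivalently $w = -\frac{1}{M}\log(1 - Mv)$, chosen so that $v$ solves the heat equation $v_t = v_{xx}$ on the half-line $x < l_0$. A direct computation: with $v = g(w)$ one has $v_t = g'(w)w_t$ and $v_{xx} = g'(w)w_{xx} + g''(w)w_x^2$, so $v$ satisfies the heat equation precisely when $g'(w)\bigl(-Mw_x^2\bigr) + g''(w)w_x^2 = 0$, i.e. $g''= Mg'$, which gives $g(w) = \frac{1}{M}(1-\me^{-Mw})$ up to the affine normalization that fixes $g(0)=0$, $g'(0)=1$. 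Note $g$ is smooth and strictly increasing, so the substitution is a bijection between the relevant ranges; the Dirichlet datum $w(l_0,t)=\delta_0$ becomes $v(l_0,t) = g(\delta_0) =: \beta_0 > 0$, and the initial datum $w_0$, which vanishes for $x$ near $-\infty$, becomes $v_0 = g(w_0) \in L^\infty((-\infty,l_0])$ with $v_0 = 0$ for all large negative $x$.

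Next I would identify the limit of $v(x,t)$ as $t\to\infty$. The function $v$ solves the heat equation on the half-line $(-\infty, l_0)$ with bounded initial data and constant Dirichlet value $\beta_0$ at $x = l_0$. The constant function $\beta_0$ is a stationary solution, and I claim $v(\cdot,t)\to\beta_0$ locally uniformly on $(-\infty,l_0]$. This is a standard fact: writing $v = \beta_0 + \tilde v$, the function $\tilde v$ solves the heat equation on the half-line with zero Dirichlet data at $l_0$ and bounded initial data, and by the explicit representation (reflection / half-line heat kernel) together with $\tilde v_0$ being bounded and compactly supported away from $-\infty$ on one side — more precisely bounded and tending to a constant only on the $x\to-\infty$ end — one checks $\tilde v(x,t)\to 0$ locally uniformly as $t\to\infty$. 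Alternatively one can sandwich $v$ between solutions with initial data $\min(\inf v_0,\beta_0)$ and $\max(\sup v_0,\beta_0)$ shifted appropriately and invoke the comparison principle together with the fact that bounded caloric functions on a half-line with constant boundary data relax to that constant; since the main point of the paper is the nonlinear PDE theory, I would state this half-line decay as a routine consequence of the maximum principle and the explicit heat kernel rather than belabor it. Finally, undoing the substitution, $w(x,t) = g^{-1}(v(x,t)) \to g^{-1}(\beta_0) = \delta_0$ locally uniformly on $(-\infty,l_0]$, which is exactly \eqref{w-to-delta0}.

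The main obstacle is the half-line asymptotics of the linear heat equation with a nonzero constant Dirichlet condition and merely bounded (not decaying) initial data: one must be a little careful because $v_0$ need not tend to $0$ as $x\to -\infty$ and the domain is unbounded, so convergence can only be \emph{local} uniform, and the rate is not uniform in $x$. The clean way to handle this is the comparison principle: for any $R>0$ and $\epsilon>0$, build a supersolution and a subsolution on $(-\infty,l_0)$ that agree with $v$'s data and pinch to within $\epsilon$ of $\beta_0$ on $[l_0-R,l_0]$ for $t$ large (e.g. using the stationary solution $\beta_0$ perturbed by a multiple of the linear-in-$x$ profile, or by erfc-type barriers built from the half-line heat kernel). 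One also uses here that $v_0 \ge 0$ — guaranteed since $w_0 \ge$ (its infimum, which can be taken $0$ after noting $w_0 = 0$ for large negative $x$ forces no sign constraint, but $g$ maps $[\inf w_0,\sup w_0]$ to a bounded interval so $v_0$ is bounded regardless). Once the linear half-line relaxation is in hand, everything else is the elementary algebra of the substitution $g$ and its inverse, which is smooth and monotone on the compact range in play.
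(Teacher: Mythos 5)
Your proposal is correct and follows essentially the same route as the paper: the paper sets $\psi = \me^{-Mw}$ (affinely equivalent to your $v=\frac{1}{M}(1-\me^{-Mw})$) to reduce \eqref{subsolu-M} to the heat equation on the half-line with constant Dirichlet data, and then proves the local uniform relaxation to that constant by exactly the odd-reflection/heat-kernel computation you sketch as the "standard fact." (One minor slip: the linearization condition should be $g''=-Mg'$ rather than $g''=Mg'$, but your displayed $g$ satisfies the correct equation, so nothing is affected.)
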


\begin{proof}
Taking 
$$\psi(x,t)= \exp\big(-M w(x,t) \big) \,\,\hbox{ for }  x\leq l_0,\,t\geq 0,$$
and substituting it into equation \eqref{subsolu-M}, we obtain that $\psi(x,t)$ is the solution of
\begin{equation*}
\left\{\baa{ll}
\smallskip   \psi_t= \psi_{xx}, & \hbox{ for } \,x<l_0,\,t>0, \vspace{3pt}\\
\smallskip  \psi(x,0)= \exp\big(-Mw_0(x)\big), & \hbox{ for }\,x\leq l_0, \vspace{3pt}\\
\psi(l_0, t)= \exp\big(-M\delta_0\big), & \hbox{ for }\, t\geq 0.\eaa\right.
\end{equation*}
Therefore, to prove \eqref{w-to-delta0}, it amounts to show 
\begin{equation}\label{psi-to-expdelta0}
\lim_{t\to\infty} \psi(x,t) =\exp\big(-M\delta_0\big)  \,\,\hbox{ locally uniformly in } x\in (-\infty,l_0].
 \end{equation}
To do this, we consider the heat equation 
\begin{equation*}
\left\{\baa{ll}
\smallskip   \varphi_t= \varphi_{xx}, & \hbox{ for } \,x\in\R,\,t>0, \vspace{3pt}\\
\varphi(x,0)= \varphi_0(x), & \hbox{ for }\, x\in\R,\eaa\right.
\end{equation*}
where $\varphi_0 \in L^{\infty}(\R)$ is an odd function with respect to $l_0$ given by 
\begin{equation*}
\varphi_0(x):=\left\{\baa{ll}
\smallskip   \exp\big(-Mw_0(x)\big)-\exp\big(-M\delta_0\big), & \hbox{ for } \,x< l_0, \vspace{3pt}\\
\smallskip   0, & \hbox{ for } \,x= l_0, \vspace{3pt}\\
\exp\big(-M\delta_0\big) -  \exp\big(-Mw_0(2l_0-x)\big), & \hbox{ for }\, \,x>l_0.\eaa\right.
\end{equation*}
One easily checks that $\varphi(x,t)=-\varphi(2l_0-x,t)$ for $x\in\R$, $t\geq 0$, and hence,
$\varphi(l_0,t)=0$ for all $t\geq 0$. This implies that
\begin{equation}\label{psi-varphi-expo}
\psi(x,t)= \varphi(x,t)+ \exp\big(-M\delta_0\big) \,\,\hbox{ for } x\leq l_0, \,t\geq 0. 
\end{equation}
On the other hand, since $w_0(x)=0$ for all sufficiently large negative $x$, there exists $l_1>0$ sufficiently large such that 
$$\varphi_0(x)= \exp\big(-M\delta_0\big) - 1\,\,\hbox{ for } x\geq l_1,\qquad \varphi_0(x) = 1- \exp\big(-M\delta_0\big)\,\,\hbox{ for } x\leq -l_1. $$
It follows that 
\begin{equation*}
\begin{split}
\varphi(x,t)
\medskip\,&\, = \displaystyle \int_{-\infty}^{\infty} \varphi_0(y) K(t,x-y)  dy \vspace{3pt}\\
\medskip  \,&\,\displaystyle   = \Big(1-\exp\big(-M\delta_0\big) \Big) \Big[ \int_{-\infty}^{-l_1}  K(t,x-y)  - \int_{l_1}^{\infty} K(t,x-y)dy \Big] \vspace{3pt}\\ 
\medskip  \,&\displaystyle \qquad\qquad+\int_{-l_1}^{l_1} \varphi_0(y) K(t,x-y)  dy,
\end{split}
\end{equation*}
where $K$ is the fundamental solution of the heat equation, i.e. 
$$ K(t,x):=\frac{1}{\sqrt{4\pi t}} \exp\Big( \frac{-x^2}{4t} \Big) .  $$
It is then straightforward to compute that
$$\varphi(x,t)\to  0 \hbox{ as } t\to\infty \hbox{ locally uniformly in } x\in\R.$$
Combining this with \eqref{psi-varphi-expo}, we immediately obtain \eqref{psi-to-expdelta0}, and hence \eqref{w-to-delta0} is proved. 
\end{proof}

\vskip 5pt

With the above preparations, we are ready to give the proof of Proposition \ref{nonexistence-result}.

\begin{proof}[Proof of Proposition \ref{nonexistence-result}]
For the sake of convenience, we may assume without loss of generality that $p\equiv 0$. Indeed, if this not satisfied, one can replace the nonlinearity $f(t,u)$ by $f(t,u+p)-p_t$, then $U(x,t)-p(t)$ is a symmetrically decreasing $T$-periodic solution of \eqref{ground} (with the new nonlinearity) based at $0$. Moreover, this change does not affect the behavior of solutions of \eqref{odeinitial} with initial values close to $p(0)$.

We also assume without loss of generality that $x=0$ is the symmetric point of $U(x,t)$, and that there exists an element $\bar{p}\in\mathcal{X}_{per}$ such that 
$$U(x,t)< \bar{p}(t) \quad\hbox{ for } x\in\R,\, t\in\R.$$
Indeed, if this new condition is not satisfied, since $U(x,t)$ is bounded, one can achieve it by easily modifying the values of $f(t,u)$ where $u$ is far away above the range of $U(x,t)$.

Suppose by contrary that $0\in \mathcal{Y}_{per}$. Then, since $\lim_{|x|\to\infty} U(x,t)=0$, we can find $l_0<0$ sufficiently large negative such that the solution $h(t;a)$ of \eqref{odeinitial} is $T$-monotone nondecreasing for each $a\in [0,U(l_0,0)]$. Since $U(x,t)$ is increasing in $x\in (-\infty,l_0]$, for any given $x\in (-\infty,l_0]$, we have
\begin{equation}\label{find-l0}
h(t;a) \hbox{ is $T$-monotone nondecreasing for each } a\in [0,U(x,0)].
\end{equation}
Let $v(x,t) \in C^1((-\infty,l_0)\times [0,T])$ be determined by the relation
\begin{equation}\label{change-variable-psi}
U(x,t)= h(t;v(x,t)) \,\,\hbox{ for }  \, x\in (-\infty,l_0],\,t\in [0,T].
\end{equation}
By simple comparison arguments, we conclude that $v(x,t)$ is increasing in $x\in (-\infty,l_0)$,
\begin{equation}\label{claim-xtoinfty-vto0}
v(x,t) \to 0 \,\,\hbox{ as } x\to -\infty \,\hbox{ uniformly in } t\in [0,T],
\end{equation}
and 
\begin{equation}\label{bound-of-v}
0<v(x,t)<\bar{p}(t)\,\,\hbox{ for }  x\in (-\infty,l_0],\,t\in [0,T]. 
\end{equation}

For clarity, we divide the following analysis into 4 steps.

{\bf Step 1: }In this step, we show that for any given $x\in (-\infty,l_0]$, $h(t;a)$ is $T$-monotone nondecreasing for each $a\in [0,v(T,x)]$.

To show this, in view of \eqref{find-l0}, it suffices to prove that 
\begin{equation}\label{U0-geq-vT}
U(x,0)\geq v(x,T) \,\, \hbox{ for all } x\in (-\infty, l_0].
\end{equation}
Indeed, since $U(x,t)$ is $T$-periodic in $t$, taking $t=0$ and $t=T$ in \eqref{change-variable-psi}, we immediately obtain 
\begin{equation}\label{recurrence}
v(x,0)=h(T,v(x,T)) \,\, \hbox{ for all } x\in (-\infty, l_0].  
\end{equation}
Assume by contradiction that \eqref{U0-geq-vT} does not hold. Then there exists $y_0\in (-\infty, l_0]$ such that $U(y_0,0)< v(y_0,T)$. It then follows from \eqref{find-l0} at $x=y_0$ that
$$v(y_0,0)=h(T,v(y_0,T))> h(T,U(y_0,0)) \geq  U(y_0,0),  $$ 
which is an apparent contradiction with the fact that $v(y_0,0)=U(y_0,0)$. Thus, the conclusion of this step is proved. 

{\bf Step 2:} We show a lower estimates of $v(x,t)$ under the additional assumption that $f(\cdot,u)$ is a $C^2$ function in $u$.

Since $f(\cdot,u)$ is $C^2$ in $u$, it is easily seen that $h(t;a)$ is a $C^2$ function with respect to $a$. Moreover, it is also straightforward to check that 
$v\in C^{2,1}((-\infty,l_0)\times [0,T])$, and that $U(x,t)$ is a solution of \eqref{ground} over $(-\infty,l_0)\times [0,T]$ if and only if 
$v(x,t)$ is a solution of 
\begin{equation*}
h_a(t;v)v_t=h_a(t;v)v_{xx}+ h_{aa}(t;v)v_x^2\,\,\hbox{ for }\, x\in (-\infty,l_0),\,t\in [0,T], 
\end{equation*}
or equivalently, 
\begin{equation}\label{equation-variable-v}
v_t=v_{xx}+ \frac{h_{aa}(t;v)}{h_a(t;v)}v_x^2\,\,\hbox{ for }\, x\in (-\infty,l_0),\,t\in [0,T],
\end{equation}
since $h_a(t;v)$ is positive over $(-\infty,l_0)\times [0,T]$ due to the estimate \eqref{esti-ha}. 

By \eqref{bound-of-v} and Lemma \ref{derivative-estimates}, we have 
\begin{equation}\label{esitmate-haa/ha}
\frac{h_{aa}(t;v)}{h_a(t;v)} \geq -M \,\, \hbox{ for } \, x\in (-\infty,l_0),\,t\in [0,T], 
\end{equation}
where $M$ is a positive constant depending only on 
$$T,\quad  \sup_{t\in [0,T],\, u\in[0,\bar{p}(t)] } \big| \partial_{u} f(t,u)  \big| \quad\hbox{and}\quad\sup_{t\in [0,T],\, u\in[0,\bar{p}(t)]  } \big| \partial_{uu} f(t,u)  \big|.$$
Note that $\inf_{t\in [0,T]} v(x,t)>0$ for every $x\in (-\infty,l_0]$. We can choose a nonnegative and nonzero function $w_0\in C((-\infty, l_0])$ satisfying 
\begin{equation}\label{initial-cond-w0}
w_0(x) \leq  v(0,x) \,\hbox{ for } x\leq l_0, \qquad  w_0(x)=0 \,\hbox{ for sufficiently large negative } x.  
\end{equation}
Let $w(x,t)$ be the solution of \eqref{subsolu-M} with initial function $w_0$, $M$ given by \eqref{esitmate-haa/ha}, and 
$$\delta_0:=\frac{1}{2} \inf_{t\in [0,T]} v(l_0,t)>0.$$
Applying the comparison principle to the equations \eqref{equation-variable-v} and \eqref{subsolu-M}, we obtain
\begin{equation*}
v(x,t) \geq w(x,t)  \,\, \hbox{ for } \, x\in (-\infty,l_0],\,t\in [0,T].
\end{equation*}
In particular, we have $v(x,T) \geq w(x,T)$ for $x\in (-\infty,l_0]$. It then follows from the conclusion of Step 1 and \eqref{recurrence} that 
\begin{equation}\label{using-monote-nondec}
v(x,0)=h(T,v(x,T))\geq  h(T,w(x,T)) \geq w(x,T) \,\, \hbox{ for } \, x\in (-\infty,l_0]. 
\end{equation}
By the comparison principle again, we have 
\begin{equation*}
v(x,t) \geq w(x,T+t) \,\, \hbox{ for } \, x\in (-\infty,l_0],\,t\in [0,T].
\end{equation*}
Then, a simple induction argument immediately gives
\begin{equation}\label{lower-extimate-v}
v(x,t) \geq w(x,nT+t)  \,\, \hbox{ for } \, x\in (-\infty,l_0],\,t\in [0,T],\,n\in\N.
\end{equation}

{\bf Step 3:} We show the estimate \eqref{lower-extimate-v} in the case where $f$ satisfies \eqref{assume-lipschitz}

In this case, we can choose a sequence of locally H{\"o}lder continuous functions $(f_k)_{k\in\N}$ in $\R\times[0,\infty)$ such that for each $k\in\N$, $f_k(\cdot,u)$ is a $C^2$ function in $u\geq 0$, that
$$f_k(t,u)  \nearrow f(t,u)  \,\,\hbox{ as } k\to\infty \,\hbox{ locally uniformly in } u\geq 0,\, t\in \R, $$  
and that 
\begin{equation}\label{approximate}
\partial_u f_k(t,u),\,\, \partial_{uu} f_k(t,u)  \hbox{ are uniformly bounded in }  u\in [0,\bar{p}(t)],\,t\in [0,T],\,k\in\N,  
\end{equation}
where $\bar{p}(t)$ is given in \eqref{bound-of-v}. For each $k\in\N$, let $v^k(x,t)$ be the function determined by the relation 
$$ U(x,t)= h^k(t;v^k(x,t)) \,\,\hbox{ for }  \, x\in (-\infty,l_0],\,t\in [0,T],$$
where for any $a>0$, $ h^k(t;a)$ is the solution of the ODE 
$$h^k_t=f_k(t,h^k) \,\,\hbox{ for } t>0;\quad h^k(0)=a. $$
One then easily checks that $v^k(x,t)$ is nonincreaasing in $k\in\N$, 
and that for each $k\in\N$, $v^k\in C^{2,1}((-\infty,l_0)\times [0,T])$ satisfies
$$v^k_t=v^k_{xx}+ \frac{h^k_{aa}(t;v^k)}{h^k_a(t;v^k)}(v^k_x)^2+ \frac{f(t,h^k)-f_k(t,h^k)}{h^k_a(t;v^k)}
\,\,\hbox{ for }\, x\in (-\infty,l_0],\,t\in [0,T].$$
It is also clear that
\begin{equation*}
v^k(x,t) \to v(x,t)\,\, \hbox{ as } k\to\infty \,\hbox{ pointwisely in } \, x\in (-\infty,l_0],\,t\in [0,T], 
\end{equation*}
where $v(x,t)$ is the function determined by \eqref{change-variable-psi}.

Moreover, because of \eqref{approximate}, by the proof of Lemma \ref{derivative-estimates}, we see that $h^k_a(t;v^k)$ is uniformly positive and $h^k_{aa}(t;v^k)/h^k_a(t;v^k)$ is uniformly bounded in $x\in (-\infty,l_0],\,t\in [0,T]$ with the bounds independent of $k\in\N$. By a slight abuse of notation, we still use $-M$ to denote the lower bound of $h^k_{aa}(t;v^k)/h^k_a(t;v^k)$. It then follows that, for each $k\in\N$, $v^k(x,t)$ satisfies 
$$v^k_t\geq v^k_{xx}-M(v^k_x)^2 \,\,\hbox{ for }\, x\in (-\infty,l_0],\,t\in [0,T].$$
Let $w(x,t)$ be the solution of \eqref{subsolu-M} with such $M$ and $w_0$, $\delta_0$ given as in Step 2.
By the comparison principle, we have
\begin{equation*}
v^k(x,t) \geq w(x,t)  \,\, \hbox{ for } \, x\in (-\infty,l_0],\,t\in [0,T],\,k\in\N.
\end{equation*}
Sending the limit to $k\to\infty$, we obtain  
\begin{equation*}
v(x,t) \geq w(x,t)  \,\, \hbox{ for } \, x\in (-\infty,l_0],\,t\in [0,T].
\end{equation*}
Then the same reasoning as in showing \eqref{using-monote-nondec} gives $v(x,0) \geq w(x,T) $ for $x\in (-\infty,l_0]$, whence $v^k(x,0) \geq w(x,T)$ for each $k\in\N$. By the comparison principle again, we have
\begin{equation*}
v^k(x,t) \geq w(x,t+T)  \,\, \hbox{ for } \, x\in (-\infty,l_0],\,t\in [0,T],\,k\in\N.
\end{equation*}
An induction argument implies that, for each $k\in\N$, 
\begin{equation*}
v^k(x,t) \geq w(x,nT+t)  \,\, \hbox{ for } \, x\in (-\infty,l_0],\,t\in [0,T],\,n\in\N.
\end{equation*}
Therefore, passing the limit to $k\to\infty$ in the above inequality immediately gives \eqref{lower-extimate-v}.

{\bf Step 4:} Completion of the proof. 

By Lemma \ref{estimate-trans-heat}, we have
\begin{equation*}
\lim_{t\to\infty} w(x,t) =\delta_0 \,\,\hbox{ locally uniformly in } x\in (-\infty,l_0].
\end{equation*}
This together with \eqref{lower-extimate-v} implies 
$$ v(x,t) \geq \delta_0  \,\,\hbox{ for } x\in (-\infty, l_0), \,t\in [0,T],  $$
which appears to be a contradiction with \eqref{claim-xtoinfty-vto0}. Therefore, $0\notin \mathcal{Y}_{per}$, and  the proof of Proposition \ref{nonexistence-result} is complete. 
\end{proof}

\vskip 5pt

As an easy application of Proposition \ref{nonexistence-result}, we have the following corollary.

\begin{corollary}\label{coro-intermediate}
Suppose that problem \eqref{ground} admits a symmetrically decreasing $T$-periodic solution $U(x,t)$ based at some $p\in \mathcal{X}_{per}$.  Then $p$ is stable from above with respect to \eqref{odeinitial}, and
there exists another $q\in \mathcal{X}_{per}$ such that
\begin{equation}\label{pleqqlequ}
p(t)<q(t)< \max_{x\in\R}U(x,t)\,\,\hbox{ for } t\in\R. 
\end{equation}
\end{corollary}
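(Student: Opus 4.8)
The plan is to deduce both conclusions from Proposition \ref{nonexistence-result}, which already yields $p\notin\mathcal{Y}_{per}$. First, as in the proof of Proposition \ref{nonexistence-result}, I would modify $f$ for values of $u$ far above $\max_{x}U$ so that there is $\bar p\in\mathcal{X}_{per}$ with $U(x,t)<\bar p(t)$; this makes the solution $h(\cdot;a)$ of \eqref{odeinitial} globally defined (trapped between $p$ and $\bar p$) for every $a\in[p(0),\bar p(0)]$, and it changes neither the stability of $p$ from above nor any periodic orbit below $\bar p$. Stability of $p$ from above is then immediate: if $p$ were unstable from above, then by the characterization recalled in Section 1 there would be $\epsilon>0$ with $h(\cdot;a)$ $T$-monotone increasing — a fortiori $T$-monotone nondecreasing — for every $a\in(p(0),p(0)+\epsilon)$, i.e.\ $p\in\mathcal{Y}_{per}$, contradicting Proposition \ref{nonexistence-result}.

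To construct $q$, I would normalize so that $x=0$ is the symmetry axis of $U$, set $a_{*}:=\max_{x}U(x,0)=U(0,0)$ (note $a_{*}>p(0)$ since $U$ is symmetrically decreasing), and first establish that $U_{xx}(0,t)<0$ for all $t$. For this, $w:=U_{x}$ solves a linear equation of the form \eqref{zeroeq} on $\R\times\R$ with bounded coefficient, and symmetric decrease forces $\mathcal{Z}(w(\cdot,t))\equiv1$, the only zero being $x=0$; if $U_{xx}(0,t^{*})=0$ at some $t^{*}$ then $w(0,t^{*})=w_{x}(0,t^{*})=0$, so Lemma \ref{zero1}(iii) would give $\mathcal{Z}(w(\cdot,t))>1$ for $t<t^{*}$, a contradiction. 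Hence $t\mapsto U(0,t)$ is a strict subsolution of \eqref{odeinitial}, and by $T$-periodicity $a_{*}=U(0,T)<h(T;a_{*})$. Since $h(T;p(0))=p(0)$ while $p\notin\mathcal{Y}_{per}$ provides $a_{1}\in(p(0),a_{*})$ with $h(T;a_{1})<a_{1}$, the intermediate value theorem applied to the continuous map $a\mapsto h(T;a)-a$ on $[a_{1},a_{*}]$ yields $q_{0}\in(a_{1},a_{*})\subset(p(0),a_{*})$ with $h(T;q_{0})=q_{0}$. Then $q:=h(\cdot;q_{0})\in\mathcal{X}_{per}$ is positive, satisfies $q(t)>p(t)$ for all $t$ by uniqueness for \eqref{ODE}, and $q\ne p$.

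The last thing to check is $q(t)<U(0,t)=\max_{x}U(x,t)$ for all $t$. I would set $g:=U(0,\cdot)-q$, a $T$-periodic $C^{1}$ function with $g(0)=a_{*}-q_{0}>0$, and write $f(t,U(0,t))-f(t,q(t))=c(t)g(t)$ with $c(t):=\int_{0}^{1}\partial_{u}f(t,q(t)+\theta g(t))\,d\theta$ bounded and $T$-periodic, so that $g'(t)=U_{xx}(0,t)+c(t)g(t)$. Then the integrating factor $G(t):=g(t)\exp\!\big(-\!\int_{0}^{t}c(s)\,ds\big)$ satisfies $G'(t)=U_{xx}(0,t)\exp\!\big(-\!\int_{0}^{t}c(s)\,ds\big)<0$ on all of $\R$, hence $G$ is strictly decreasing; but $G(mT)=g(0)\exp\!\big(-m\!\int_{0}^{T}c(s)\,ds\big)>0$ for all $m\in\Z$, so $G$ can never take a nonpositive value. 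Therefore $G>0$, hence $g>0$, on $\R$, which is \eqref{pleqqlequ}.

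The step I expect to be the main obstacle is this last one: one cannot propagate $q(0)<U(0,0)$ to all times by a plain comparison argument, because $U(0,\cdot)$ sits \emph{above} $q$ while being a sub- (not super-) solution of the ODE; the argument must go through the strict inequality $U_{xx}(0,\cdot)<0$ — obtained from the zero-number lemmas — via the integrating-factor identity. Once Proposition \ref{nonexistence-result} and the inequality $h(T;a_{*})>a_{*}$ are in hand, the remaining points (stability of $p$ from above, and the intermediate value theorem for $q_{0}$) are routine.
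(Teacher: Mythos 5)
Your proof is correct; the first conclusion (stability of $p$ from above) is obtained exactly as in the paper, as an immediate consequence of Proposition \ref{nonexistence-result}, but for the existence of $q$ you follow a genuinely different route. The paper argues by contradiction: assuming no intermediate $q$ exists, it takes the smallest $\bar p\in\mathcal{X}_{per}$ lying above $U$, observes (using $p\notin\mathcal{Y}_{per}$ and the absence of periodic orbits in between) that every solution of \eqref{odeinitial} started in $(p(0),\bar p(0))$ decays to $p$ as $m\to\infty$, and then contradicts the $T$-periodicity of $U$ by comparing $U$ with such an orbit. You instead construct $q$ directly: the zero-number argument via Lemma \ref{zero1}(iii) gives $U_{xx}(x_0,t)<0$, so $t\mapsto\max_{x}U(x,t)$ is a strict subsolution of \eqref{odeinitial} and the period map satisfies $h(T;a_*)>a_*$ at $a_*=\max_xU(x,0)$, while $p\notin\mathcal{Y}_{per}$ supplies a point $a_1$ where the period map moves down; the intermediate value theorem then yields the fixed point $q_0\in(a_1,a_*)$. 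Your final integrating-factor step, showing that the $T$-periodic function $g=U(x_0,\cdot)-q$ with $g'=U_{xx}(x_0,\cdot)+cg$ cannot vanish, is also sound (equivalently: at any zero of $g$ one has $g'=U_{xx}(x_0,\cdot)<0$, so every zero is a strict downward crossing, which is incompatible with $T$-periodicity), and it is needed for exactly the reason you identify, since plain comparison does not propagate $q(0)<U(x_0,0)$ forward in time. The trade-off is that the paper's proof is shorter, while yours is constructive, localizes the sign change of $a\mapsto h(T;a)-a$ inside $(p(0),a_*)$, and isolates the useful standalone fact that any $q\in\mathcal{X}_{per}$ with $q(0)<\max_xU(x,0)$ remains below $\max_xU(x,\cdot)$ for all time. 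Both arguments rest on Proposition \ref{nonexistence-result} and on the same preliminary modification of $f$ above the range of $U$ to guarantee global existence of the relevant ODE orbits on $[0,T]$.
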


\begin{proof}
The stability of $p$ with respect to \eqref{odeinitial} follows directly from Proposition \ref{nonexistence-result}. 
To prove \eqref{pleqqlequ}, we assume by contradiction that there is no such an element $q\in \mathcal{X}_{per}$. Let $\bar{p}$ be the smallest element in $\mathcal{X}_{per}$ such that $U(x,t)<\bar{p}(t)$ for $x\in\R$, $t\in\R$ (see the beginning of the proof of Proposition \ref{nonexistence-result} for the existence of such $\bar{p}$). It is clear that for each $a\in \big(p(0), \bar{p}(0)\big)$,
\begin{equation*}
h(t+mT;a)  \to  p(t) \,\hbox{ as } m\to\infty \hbox{ locally uniformly in }  t\in\R,
\end{equation*}
which is a contradiction with the existence of $U(x,t)$. 
\end{proof}

\vskip 15pt

\subsection{A Dirichlet boundary problem}

Let  $p_{+}>p_{-}$ be two elements of $\mathcal{X}_{per}$ satisfying that for each $a\in \big(p_-(0), p_+(0)\big)$, 
\begin{equation}\label{unstablec}
h(t+mT;a) \nearrow  p_{+}(t) \,\hbox{ as } m\to\infty \hbox{ locally uniformly in }  t\in\R,
\end{equation}
where $h(t;a)$ is the solution of \eqref{odeinitial}. In other words, $p_+$ is stable from below and $p_-$ is unstable from above with respect to \eqref{odeinitial}. This subsection is concerned with the existence of  solutions of the following periodic-parabolic problem with Dirichlet boundary condition 
\begin{equation}\label{steady-state-equation}
\left\{\baa{ll}
\smallskip  \varphi_t=\varphi_{xx}+f(t,\varphi), & \hbox{ for } \,-R<x<R,\,t\in\R, \vspace{3pt}\\
\smallskip \varphi(x, t+T)=\varphi(x,T), & \hbox{ for }\,-R\leq x\leq R,\,t\in\R, \vspace{3pt}\\
\smallskip p_-(t) <\varphi(x,t)<p_{+}(t), & \hbox{ for }\,-R< x< R,\,t\in\R, \vspace{3pt}\\
\varphi(\pm R, t)=p_-(t), & \hbox{ for }\, t\in\R ,\eaa\right.
\end{equation}
where $R$ is a positive constant. 
\vskip 5pt

The main result of this subsection is stated as follows.

\begin{prop}\label{existence-bound-interval}
Let $f$ satisfy \eqref{tperiod} and \eqref{assume-lipschitz} and assume \eqref{unstablec} holds. Then for all sufficiently large $R$, \eqref{steady-state-equation} admits a symmetrically decreasing solution.
\end{prop}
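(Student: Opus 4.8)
The plan is to realize a solution of \eqref{steady-state-equation} as a fixed point of the Poincar\'e (time-$T$) map of the Cauchy--Dirichlet problem on $(-R,R)$, and then to extract symmetry and strict monotonicity by a reflection argument.

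First I would set up the framework. For $\varphi_0\in C([-R,R])$ with $p_-(0)\le\varphi_0\le p_+(0)$, let $\Phi(\cdot,t;\varphi_0)$ solve $\Phi_t=\Phi_{xx}+f(t,\Phi)$ on $(-R,R)$ with $\Phi(\pm R,t)=p_-(t)$ and $\Phi(\cdot,0)=\varphi_0$, and put $P(\varphi_0):=\Phi(\cdot,T;\varphi_0)$. Since the spatially constant functions $p_-(t)$ and $p_+(t)$ solve the equation and respectively equal and dominate the boundary datum, the comparison principle gives $p_-(t)\le\Phi(\cdot,t)\le p_+(t)$; thus $P$ maps the order interval $\{p_-(0)\le\varphi_0\le p_+(0)\}$ into itself, is order-preserving, is compact by parabolic smoothing, and commutes with $x\mapsto-x$. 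The constant $p_+(0)$ satisfies $P(p_+(0))\le p_+(T)=p_+(0)$, so $\psi_n:=P^n(p_+(0))$ decreases to a fixed point $\varphi^*$, that is, an even $T$-periodic solution of the equation on $(-R,R)$ with $\varphi^*(\pm R,\cdot)=p_-$ and $p_-\le\varphi^*\le p_+$. If $\varphi^*\not\equiv p_-$, the strong maximum principle and the Hopf lemma give $p_-<\varphi^*<p_+$ in the interior, so $\varphi^*$ solves \eqref{steady-state-equation} except possibly for the strict monotonicity.

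The heart of the proof is to show that $\varphi^*\not\equiv p_-$ once $R$ is large, and here \eqref{unstablec} enters. One way is to build, for large $R$, an invading lower barrier: using that $h(t+mT;a)\nearrow p_+(t)$ for every $a\in(p_-(0),p_+(0))$, one constructs a subsolution of the Cauchy--Dirichlet problem started from a wide plateau near some level $a_0$, whose core is kept above $a_0$ by the ODE dynamics while its edges are matched from below to the diffusive spreading of the plateau, so that the core does not erode under $P$; comparison then forces $\varphi^*(0,0)\ge a_0>p_-(0)$. (When $p_-$ is linearly unstable, i.e. $\int_0^T\partial_u f(t,p_-(t))\,dt>0$, this is just the classical monostable spreading estimate.) Alternatively, and more in the spirit of this section, one argues by contradiction: if $\varphi^{*,R}\equiv p_-$ along a sequence $R_k\to\infty$, then, using that each $\psi_m^{(R_k)}$ ($m\ge1$) is symmetrically decreasing (which itself follows from the reflection argument below, since $\psi_m^{(R_k)}=\Phi^{(R_k)}(\cdot,mT;p_+(0))$ is symmetrically decreasing for small times by the boundary layer and this property is preserved), a parabolic-compactness limit of suitable time-translates of the $\psi_m^{(R_k)}$ yields a nontrivial symmetrically decreasing entire solution of \eqref{ground} on $\R\times\R$ based at $p_-$; passing to its $\omega$-limit produces a $T$-periodic such solution, contradicting Proposition \ref{nonexistence-result} (equivalently Corollary \ref{coro-intermediate}), since $p_-\in\mathcal{Y}_{per}$ by \eqref{unstablec}.

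Finally I would treat symmetry and monotonicity. Evenness of $\varphi^*$ is automatic. For strict spatial monotonicity I would run the moving-plane/reflection method on the $T$-periodic solution $\varphi^*$: for $0<\lambda<R$ the function $w_\lambda(x,t):=\varphi^*(x,t)-\varphi^*(2\lambda-x,t)$ on $(\lambda,R)\times\R$ solves a linear parabolic equation with bounded coefficient, vanishes at $x=\lambda$, and is $\le0$ at $x=R$ because $\varphi^*(R,\cdot)=p_-\le\varphi^*(2\lambda-R,\cdot)$; starting from $\lambda$ close to $R$, where the thinness of the strip gives the maximum principle, and using the strong maximum principle and Hopf lemma (together with the fact that $\varphi^*$ cannot be symmetric about two distinct points without becoming periodic in $x$ and hence attaining its minimum $p_-$ at interior points) to slide $\lambda$ down to $0$, one gets $w_\lambda<0$ in the interior for every $\lambda\in(0,R)$, hence $\partial_x\varphi^*(\lambda,t)<0$; the same computation shows $P$ preserves symmetric monotonicity, as used above. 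The main obstacle is clearly the nontriviality step: producing the invading barrier (or running the compactness argument) in full generality, in particular when $p_-$ is only degenerately unstable from above, so that linearization at $p_-$ carries no information; the delicate point in the barrier construction is the transition layer near $x=\pm R$, where the spreading of the core must be matched from below precisely enough that the period map does not depress the barrier there.
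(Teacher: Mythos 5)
Your framework (Poincar\'e map on the order interval $[p_-(0),p_+(0)]$, monotone iteration, reflection for symmetry) is sound, but the proposition is not proved: everything hinges on the nontriviality step, and neither of your two routes actually carries it out. Route (a) is precisely the content of the paper's proof, and you leave it as ``the main obstacle'': when $p_-$ is only degenerately unstable from above, the linearization at $p_-$ gives nothing, and ``a plateau whose core is kept above $a_0$ by the ODE dynamics while its edges are matched from below to the diffusive spreading'' is exactly the object whose existence is in question. The paper constructs it explicitly: Lemma \ref{perturb} produces a smooth perturbation $g$ on $[p_-(0),p_+(0)]$, vanishing at the endpoints and positive inside, such that the flow $H(t;a)$ of $H_t=f(t,H)-g(a)$ still satisfies $H(T;a)>a$; Lemma \ref{derivative-estimates-pertur} bounds $H_a$ and $H_{aa}/H_a$; Lemma \ref{phase-plane} solves $\phi''-c_1(\phi')^2=-c_2\,g(\phi)$ on $[-R,R]$ with $\phi(\pm R)=p_-(0)$ for large $R$ by reducing, via an exponential change of unknown, to a \emph{monostable} elliptic problem amenable to phase-plane analysis; and the composition $\Phi(x,t)=H(t;\phi(x))$ is then checked to be a subsolution of the Dirichlet problem on one period with $\Phi(\cdot,T)\ge\phi$, so the upward iteration from $\phi$ converges to the desired symmetrically decreasing periodic solution. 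None of this quantitative construction appears in your proposal.

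Route (b) also has a genuine gap. If $P_{R_k}^m(p_+(0))\to p_-(0)$ for a sequence $R_k\to\infty$, the natural time-translates (normalized at the last time the value at $x=0$ exceeds a fixed level $a_0$) converge to an entire solution $V$ of \eqref{ground} with $p_-\le V\le p_+$, nonincreasing under the $T$-shift. But its $\alpha$- and $\omega$-limits under the $T$-shift need not be a symmetrically decreasing $T$-periodic solution based at $p_-$: the $\alpha$-limit can be the constant $p_+$ and the $\omega$-limit the constant $p_-$, in which case $V$ is a time-decreasing heteroclinic from $p_+$ to $p_-$ and Proposition \ref{nonexistence-result}, which is stated only for $T$-periodic symmetrically decreasing solutions, does not apply. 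One can in fact rule this residual case out by rerunning the change-of-variables argument of Proposition \ref{nonexistence-result} on the time-monotone solution $V$ (the identity \eqref{recurrence} degrades to the inequality $v(x,0)\ge h(T;v(x,T))$, which is all that proof uses), but that must be checked, and you neither identify this case nor address it. As written, the proposal identifies where the difficulty lies but does not resolve it; the resolution is the entire technical content of Section 3.2 of the paper.
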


\vskip 5pt

The existence of solution of \eqref{steady-state-equation} (when $R$ is large) as well as the uniqueness is well known if $p_-$ is linearly unstable (see e.g., \cite{He}). Proposition \ref{existence-bound-interval} gives the existence in the general case that  $p_-$ is unstable from above. This general existence result will be a key ingredient in proving Theorem \ref{precise-omega-limit} in Section 5.

The proof of Proposition \ref{existence-bound-interval} relies on a perturbation argument on problem \eqref{odeinitial}. 
More precisely, for each $a\in [p_-(0),p_+(0)]$ and $\epsilon\geq 0$, let $h(t;a;\epsilon)$ denote the solution of 
\begin{equation}\label{odeperturb}
h_t=f(t,h)-\epsilon \,\hbox{ for } \,t>0;  \quad  h(0)=a. 
\end{equation}
Then the following lemma holds. 

\vskip 5pt

\begin{lemma}\label{perturb}
Let $f$ and $p_{\pm}$ be given as in Proposition \ref{existence-bound-interval}.  
Then there exists a smooth function 
$$g: \big[p_-(0), p_+(0)\big] \mapsto [0,\infty)$$ 
satisfying
$$g\big( p_-(0) \big)=g\big( p_+(0) \big)=0,$$
and 
$$g(a)>0,\quad  h\big(T;a;g(a)\big)>a \quad \hbox{ for each } \,a \in  \big(p_-(0), p_+(0)\big).$$
\end{lemma}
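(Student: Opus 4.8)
The plan is to analyze how the ``time-$T$ displacement'' map $a\mapsto h(T;a;\epsilon)-a$ depends jointly on $a$ and the perturbation parameter $\epsilon\ge 0$, and then to construct $g$ by a pointwise selection followed by a smoothing. Set $F(a,\epsilon):=h(T;a;\epsilon)-a$, which is well defined and smooth on $[p_-(0),p_+(0)]\times[0,\infty)$ (by smoothness of $f$ in $u$ and standard ODE regularity, using \eqref{assume-lipschitz}). By hypothesis \eqref{unstablec}, $h(t;a)$ is $T$-monotone increasing for every $a\in(p_-(0),p_+(0))$, i.e. $F(a,0)>0$ on the open interval, while $F(p_-(0),0)=F(p_+(0),0)=0$ since $p_\pm$ are $T$-periodic. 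On the other hand, subtracting $\epsilon$ from $f$ only decreases the solution: by the comparison principle $\epsilon\mapsto h(t;a;\epsilon)$ is strictly decreasing, so $F(a,\epsilon)$ is strictly decreasing in $\epsilon$, and moreover $F(a,\epsilon)\to-\infty$ as $\epsilon\to\infty$ uniformly on the compact $a$-interval (the subtracted drift eventually dominates, since $f$ is bounded on the relevant bounded range of $h$). Hence for each $a\in(p_-(0),p_+(0))$ there is a unique $\epsilon=\epsilon^*(a)>0$ with $F(a,\epsilon^*(a))=0$, i.e. $h(T;a;\epsilon^*(a))=a$; and one checks $\partial_\epsilon F<0$ at that point (again by comparison, differentiating: $\partial_\epsilon h$ solves a linear equation with a strictly negative forcing term), so the implicit function theorem makes $\epsilon^*$ smooth on the open interval, with $\epsilon^*(a)\to 0$ as $a\to p_\pm(0)$.

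The one subtlety is that $\epsilon^*$ need not extend smoothly (even continuously with a controlled rate) up to the endpoints, so I would not take $g=\epsilon^*$ directly. Instead I would choose $g$ to be any smooth function on $[p_-(0),p_+(0)]$ vanishing exactly at the two endpoints and satisfying $0<g(a)<\epsilon^*(a)$ for all $a$ in the open interval — for instance, multiply a fixed smooth bump supported on the open interval by a sufficiently small constant on each compact subinterval, patched together; concretely, since $\epsilon^*$ is continuous and positive on the open interval and bounded below on each compact subinterval, and since near the endpoints we additionally know $F(a,0)\to 0$, a routine construction produces such a $g$. Because $F(a,\cdot)$ is strictly decreasing and $F(a,\epsilon^*(a))=0$, the inequality $g(a)<\epsilon^*(a)$ forces $F(a,g(a))>0$, i.e. $h(T;a;g(a))>a$, which is exactly the required conclusion, and $g(a)>0$ on the open interval and $g(p_\pm(0))=0$ hold by construction.

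The main obstacle I anticipate is precisely the endpoint behavior: ensuring that one can keep $g$ strictly positive on the open interval yet force it to zero smoothly at $p_\pm(0)$ while remaining below the threshold $\epsilon^*(a)$, which itself may tend to zero at the endpoints at an a priori unknown rate. The remedy is that we never need a sharp rate — it suffices to know $\epsilon^*(a)>0$ on the open interval together with lower bounds on compact subintervals, and then to build $g$ by hand (e.g. as an infimum against a fixed smooth envelope, or via a partition of unity argument on a countable exhaustion of the open interval by compact sets), so the construction is elementary once the monotonicity and the existence/positivity of $\epsilon^*$ are in place. The rest — smoothness of $F$, monotonicity in $\epsilon$, the limit $F\to-\infty$, and the implicit function theorem step — are all routine consequences of the comparison principle and classical ODE theory.
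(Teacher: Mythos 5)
Your proposal is correct and follows essentially the same route as the paper: both identify, for each $a$ in the open interval, the critical perturbation size $\epsilon^*(a)>0$ at which the time-$T$ displacement $h(T;a;\epsilon)-a$ changes sign, show it is positive on $\big(p_-(0),p_+(0)\big)$ (with vanishing behavior at the endpoints forced by the $T$-periodicity of $p_\pm$), and then take $g$ smooth, vanishing at the endpoints, and below $\epsilon^*$ on the open interval. Your insistence on the strict inequality $g(a)<\epsilon^*(a)$ is in fact slightly more careful than the paper's condition $0<g(a)\le\epsilon_a^*$ (equality would only give $h(T;a;g(a))=a$); otherwise the arguments coincide, the paper establishing continuity of $a\mapsto\epsilon_a^*$ on the closed interval directly from the supremum definition rather than via the implicit function theorem, and neither the smoothness of $\epsilon^*$ nor the limit $F(a,\epsilon)\to-\infty$ being essential to the construction.
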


\begin{proof}
It is clear that $h(t;a)$ is $T$-monotone increasing for each $a\in \big(p_-(0), p_+(0)\big)$, that is,
$h(T;a)>a$. Due to the continuous dependence of the solution of \eqref{odeperturb} with respect to the perturbation $\epsilon$, for each $a\in \big(p_-(0), p_+(0)\big)$, there exists 
some $\epsilon_a>0$ such that 
$$h(T;a;\epsilon)>a\,\hbox{ for each }  \epsilon \in [0, \epsilon_a).$$
Denote the supremum of such $\epsilon_a$ by $\epsilon_a^*$. Clearly, $\epsilon_a^*>0$, and 
\begin{equation}\label{supremum}
h(T;a;\epsilon_a^*)=a \,\hbox{ for each } a\in \big(p_-(0), p_+(0)\big).
\end{equation}

Set $\epsilon_a^*=0$ for $a=p_{\pm}(0)$. We now prove that $\epsilon_a^*$ is continuous in $a\in [p_-(0), p_+(0)]$. It suffices to show that for any sequence $\{a_i\}_{i\in\N} \subset [p_-(0), p_+(0)]$ satisfying $a_i\to a_0$ as $i\to\infty$ for some $a_0\in [p_-(0), p_+(0)]$, there holds $\epsilon_{a_i}^* \to \epsilon_{a_0}^*$ as $i\to\infty$. The case where $a_0\in (p_-(0), p_+(0))$ follows directly from \eqref{supremum}, the definition of $\epsilon_a^*$ as well as the continuous dependence of $h(t;a;\epsilon)$ with respect to $\epsilon$ and $a$. Thus, we only need to vertify the case where $a_0=p_{\pm}(0)$. Suppose the contrary that $ \epsilon_{a_i}^*$ converges to some constant $\epsilon_0>0$ as $a_i \to p_{-}(0)$ (the case where $a_i \to p_{+}(0)$ can be treated similarly). It then follows from \eqref{supremum} that $h(T;p_{-}(0);\epsilon_0)=p_{-}(0)$, and hence 
$$h(T;p_{-}(0);\epsilon)>p_{-}(0)  \,\hbox{ for each } \epsilon \in [0,\epsilon_0), $$
which is a apparent contradiction with the fact that $p_{-}(t)$ is $T$-periodic. Therefore, $a\mapsto\epsilon_a^*$ is continuous in $a\in [p_-(0), p_+(0)]$.

It then easily seen that any smooth function $g: \big[p_-(0), p_+(0)\big] \mapsto [0,\infty)$ satisfying
\begin{equation}\label{con-gamma}
g(a)=0 \,\hbox{ for }  a=p_{\pm}(0)\quad\hbox{and}\quad   0<g(a)\leq \epsilon_a^* \,\hbox{ for }  a \in \big(p_{-}(0), p_{+}(0)\big) 
\end{equation}
is the desired function. The proof of this lemma is complete. 
\end{proof}

\vskip 5pt

For each $a\in [p_{-}(0), p_{+}(0)]$, let $H(t;a)$  denote the solution of 
\begin{equation*}
\frac{d H}{dt}=f(t,H)-g(a) \,\hbox{ for } \,0<t\leq T,  \quad  H(0)=a,
\end{equation*}
where $g$ is the perturbation function obtained in Lemma \ref{perturb}. Clearly, if $f(\cdot,u)$ is of class $C^2$ in $u$, then $H(t;a)$ is a $C^2$ function in $a$. Let $C_1$ and $C_2$ be the constants given in  \eqref{defi-C1-C2}. Then we have the following parallel result to Lemma \ref{derivative-estimates}.

\begin{lemma}\label{derivative-estimates-pertur}
Let $g$ be the function obtained in Lemma \ref{perturb}. Suppose that $f(\cdot,u)$ is a $C^2$ function in $u$. Then there exist positive constants $\tilde{M}_1$, $\tilde{M}_2$ and $\tilde{M}_3$ depending only on 
$$T,\,\, C_1,\,\, C_2,\,\,  \big\| g'\big\|_{L^{\infty}([p_{-}(0), p_{+}(0)])} \quad\hbox{and}\quad \big\| g''\big\|_{L^{\infty}([p_{-}(0), p_{+}(0)])}   $$
such that
\begin{equation*}
\tilde{M}_1\leq H_{a}(t;a)\leq \tilde{M}_2\,\,\hbox{ for } t\in [0,T], \, a\in [p_{-}(0),p_{+}(0)],
\end{equation*}
and 
\begin{equation*}
-\tilde{M}_3\leq \frac{H_{aa}(t;a)}{H_a(t;a)} \leq \tilde{M}_3 \,\,\hbox{ for } t\in [0,T], \, a\in [p_{-}(0),p_{+}(0)], 
\end{equation*}
where $H_a(t;a)$ and $H_{aa}(t;a)$ stands for $\partial H / \partial a (t;a)$ and $\partial^2 H / \partial a^2 (t;a)$, respectively. 
\end{lemma}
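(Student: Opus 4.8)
The plan is to mimic the proof of Lemma~\ref{derivative-estimates}, since $H(t;a)$ solves exactly the same type of ODE as $h(t;a)$ except that the vector field is $f(t,\cdot)-g(a)$, with the perturbation depending on the \emph{parameter} $a$ rather than on the state. First I would write down the variational equations obtained by differentiating the defining ODE for $H$ with respect to $a$. Since $H_a(0)=1$, $H_a$ solves $\frac{d H_a}{dt}=\partial_u f(t,H(t;a))\,H_a - g'(a)$, and since $H_{aa}(0)=0$, $H_{aa}$ solves $\frac{d H_{aa}}{dt}=\partial_{uu} f(t,H)\,H_a^2 + \partial_u f(t,H)\,H_{aa} - g''(a)$. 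Note the extra inhomogeneous terms $-g'(a)$ and $-g''(a)$ that were absent in \eqref{differential-1}--\eqref{differential-2}; these are the only new feature, and they are bounded by $\|g'\|_{L^\infty}$ and $\|g''\|_{L^\infty}$ respectively, which is exactly why those two norms enter the statement.

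Next I would establish the two-sided bound on $H_a$. A preliminary comparison applied to the ODE for $H$ (using that $g(a)\geq 0$ is bounded, and that, by Lemma~\ref{perturb}, the perturbed trajectories stay trapped between $p_-$ and $p_+$ for $a$ in the relevant range, so $\partial_u f(t,H(t;a))$ is controlled by $C_1$) keeps $H(t;a)$ inside $[p_-(t),p_+(t)]$, whence $|\partial_u f(t,H(t;a))|\le C_1$ on $[0,T]$. Then the linear ODE for $H_a$ is squeezed between the solutions of $\frac{dz_\pm}{dt}=\pm C_1 z_\pm \mp \|g'\|_{L^\infty}$ (with the sign of the $g'$ term chosen to be worst-case) with $z_\pm(0)=1$. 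Solving these linear scalar ODEs explicitly on $[0,T]$ gives $\tilde M_1\le H_a(t;a)\le \tilde M_2$ with $\tilde M_1,\tilde M_2$ depending only on $T$, $C_1$ and $\|g'\|_{L^\infty}$; in particular $\tilde M_1>0$, so $H_a$ never vanishes and the quotient $H_{aa}/H_a$ is well defined. The analogous comparison for the $H_{aa}$-equation, using the now-available bounds $|H_a|\le \tilde M_2$, $|\partial_{uu}f|\le C_2$ and $|g''|\le \|g''\|_{L^\infty}$, bounds $H_{aa}$ between solutions of $\frac{d\tilde z_\pm}{dt}=\pm C_1\tilde z_\pm \pm (C_2\tilde M_2^2+\|g''\|_{L^\infty})$ with $\tilde z_\pm(0)=0$; solving these gives $|H_{aa}(t;a)|\le$ some constant depending only on $T,C_1,C_2,\|g'\|_{L^\infty},\|g''\|_{L^\infty}$. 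Dividing by the lower bound $\tilde M_1$ for $H_a$ then yields the bound $|H_{aa}/H_a|\le \tilde M_3$ with the claimed dependence.

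I do not expect any serious obstacle here: the argument is essentially identical to that of Lemma~\ref{derivative-estimates}, the only point requiring a little care being the verification that the perturbed trajectories $H(t;a)$ remain in $[p_-(t),p_+(t)]$ for $a\in[p_-(0),p_+(0)]$ (so that $C_1,C_2$ genuinely control the coefficients), which follows from $0\le g(a)\le \epsilon_a^*$ together with the definition of $\epsilon_a^*$ and the monotonicity of the flow in the initial datum. If one prefers to avoid discussing the trapping region, one can instead invoke the standing reduction (used already in Proposition~\ref{nonexistence-result}) that $f$ may be modified outside a bounded range of $u$ without affecting the trajectories under consideration, so that $\partial_u f$ and $\partial_{uu} f$ are globally bounded by $C_1$ and $C_2$ outright; then the Gronwall-type comparisons above go through verbatim.
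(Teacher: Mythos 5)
Your variational equations for $H_a$ and $H_{aa}$ are correct and match the paper's, and the upper bounds are indeed routine Gronwall-type comparisons as you describe. But there is a genuine gap at the one point where this lemma differs substantively from Lemma \ref{derivative-estimates}: the claim that the lower comparison function yields $\tilde M_1>0$. Your lower barrier solves $\frac{dz_-}{dt}=-C_1 z_- - \|g'\|_{L^\infty}$ with $z_-(0)=1$, whose explicit solution is
\begin{equation*}
z_-(t)=\me^{-C_1 t}-\frac{\|g'\|_{L^\infty}}{C_1}\bigl(1-\me^{-C_1 t}\bigr).
\end{equation*}
This is \emph{not} positive on $[0,T]$ in general: it crosses zero before time $T$ as soon as $\|g'\|_{L^\infty}\geq C_1/(\me^{C_1 T}-1)$. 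Unlike \eqref{differential-1}, where the initial value $1$ is simply propagated by a linear homogeneous flow and positivity is automatic, here the inhomogeneous term $-g'(a)$ can drive $H_a$ through zero, and indeed nothing in the setup prevents two perturbed trajectories $H(\cdot;a_1)$, $H(\cdot;a_2)$ from crossing, since they solve ODEs with \emph{different} forcings $g(a_1)\neq g(a_2)$. Without a positive lower bound on $H_a$ the quotient $H_{aa}/H_a$ is not even well defined, so the second estimate collapses as well.

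The paper closes this gap by imposing an additional smallness requirement on $g$ at the point where $g$ is constructed: one demands $\|g'\|_{L^\infty([p_-(0),p_+(0)])}\leq \frac{C_1}{2(\exp(C_1T)-1)}$, which is legitimate because \emph{any} smooth function satisfying \eqref{con-gamma} serves in Lemma \ref{perturb} (e.g.\ replace an admissible $g$ by $\lambda g$ with $\lambda>0$ small). With that constraint the explicit formula above gives $z_-(t)\geq \frac12\me^{-C_1T}$ on $[0,T]$, which is the paper's $\tilde M_1$. You should either build this smallness of $g'$ into your choice of $g$ or otherwise justify why $H_a$ stays uniformly positive; as written, your argument does not. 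Your closing remark about trapping $H(t;a)$ in $[p_-(t),p_+(t)]$ (or modifying $f$ outside a bounded range) addresses only the boundedness of the coefficients $\partial_u f$, $\partial_{uu} f$ and does not touch this issue.
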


\begin{proof}
We only show the uniformly positivity of $H_{a}(t;a)$, as the proof of the other estimates is almost identical to that of Lemma \ref{derivative-estimates}. It is easily checked that $H_a(t;a)$ is the solution of 
\begin{equation*}
  \displaystyle\frac{d H_a}{dt}=\partial_uf(t,H)H_a -g'(a) \hbox{ for } \,0<t\leq T;\qquad H_a(0)=1.
\end{equation*}
By a simple comparison argument, we obtain 
$$H_-(t) \leq H_a(t;a) \,\hbox{ for } t\in [0,T],  $$
where $H_{-}(t)$ is the solution of 
$$ \displaystyle\frac{d H_{-}}{dt}=- C_1 H_{-}- C_3\,  \hbox{ for } 0<t\leq T; \quad H_{-}(0)=1. $$
Here $C_3:= \big\| g'\big\|_{L^{\infty}([p_{-}(0), p_{+}(0)])}$. It is clear that
$$H_{-}(t)= -\frac{C_3}{C_1}\big(1-\exp (- C_1t)\big)+\exp (- C_1t)\,\hbox{ for } \,0\leq t\leq T. $$
To show $H_{-}(t)$ is positive in $[0,T]$, we may require the function $g$ obtained in Lemma \ref{perturb} to satisfy
\begin{equation*}
C_3 \leq \frac{C_1}{2(\exp (C_1T)-1)}.
\end{equation*}
Indeed, the existence of such a $g$ follows directly from the fact that any smooth function satisfying \eqref{con-gamma} is the desired function in Lemma \ref{perturb}. 
It then follows that 
$$H_{-}(t)\geq \frac{\exp(- C_1t)-1}{2\big(\exp (C_1T)-1\big)}+\exp(- C_1t)\geq  \frac{1}{2}\exp(- C_1T)\,\hbox{ for } 0\leq t\leq T.$$
And hence, we have 
$$ H_a(t;a)\geq \frac{1}{2}\exp(- C_1T)  \,\hbox{ for } t\in [0,T]. $$
The proof of Lemma \ref{derivative-estimates-pertur} is complete.
\end{proof}

\vskip 5pt
The proof of Proposition \ref{existence-bound-interval} also relies on the following observation.

\vskip 5pt
\begin{lemma}\label{phase-plane}
Let $g$ be the function obtained in Lemma \ref{perturb}, and $c_1$, $c_2$ be two given positive constants.  
Then for all large $R>0$,  there  exists a symmetrically decreasing function $\phi \in C^2([-R,R])$ satisfying 
\begin{equation}\label{variable-change-1}
\left\{\baa{l}
\smallskip  \phi''-c_1(\phi')^2=-c_2g(\phi) \,\, \hbox{ for }-R<x<R,\quad  \phi(\pm R)= p_-(0),   \vspace{3pt}\\
 p_{-}(0)<\phi(x)<  p_{+}(0)\,\,\hbox{ for }-R<x<R.\eaa\right.
\end{equation}
\end{lemma}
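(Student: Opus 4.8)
The plan is to remove the quadratic gradient term by a substitution and then solve the resulting autonomous second–order equation by a time–map (shooting) argument.

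I would first set $\psi:=-\tfrac1{c_1}e^{-c_1\phi}$, i.e. $\phi=\Psi^{-1}(\psi)$ with $\Psi(s):=-\tfrac1{c_1}e^{-c_1s}$ a smooth increasing bijection of $\R$ onto $(-\infty,0)$. A direct computation shows that $\phi\in C^2([-R,R])$ solves $\phi''-c_1(\phi')^2=-c_2g(\phi)$ on $(-R,R)$ if and only if $\psi\in C^2([-R,R])$ solves $\psi''=-G(\psi)$, where $G(\psi):=c_2\,e^{-c_1\Psi^{-1}(\psi)}g(\Psi^{-1}(\psi))$. Writing $\alpha:=\Psi(p_-(0))<\beta:=\Psi(p_+(0))$, the constraint $p_-(0)<\phi<p_+(0)$ becomes $\alpha<\psi<\beta$, the condition $\phi(\pm R)=p_-(0)$ becomes $\psi(\pm R)=\alpha$, and $\phi$ is symmetrically decreasing precisely when $\psi$ is, since $\Psi^{-1}$ is increasing. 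Since $g$ is smooth with $g(p_\pm(0))=0$ and $g>0$ in between, $G$ is $C^1$ on $[\alpha,\beta]$ with $G\ge0$, $G(\alpha)=G(\beta)=0$ and $G>0$ on $(\alpha,\beta)$. It thus suffices to find, for all large $R$, a symmetrically decreasing $\psi\in C^2([-R,R])$ with $\psi(\pm R)=\alpha$ and $\alpha<\psi<\beta$ on $(-R,R)$ solving $\psi''=-G(\psi)$.

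For $\rho\in(\alpha,\beta)$ let $\psi_\rho$ be the (unique, as $G$ is Lipschitz) solution of $\psi''=-G(\psi)$, $\psi(0)=\rho$, $\psi'(0)=0$. It is even in $x$, and multiplying by $\psi'$ yields the first integral $\tfrac12(\psi_\rho')^2+\mathcal G(\psi_\rho)\equiv\mathcal G(\rho)$ with $\mathcal G(s):=\int_\alpha^s G$ strictly increasing on $[\alpha,\beta]$. Hence for $x>0$ the function $\psi_\rho$ decreases and reaches $\alpha$ transversally (with speed $-\sqrt{2\mathcal G(\rho)}<0$) at the finite value
\begin{equation*}
R(\rho)=\int_\alpha^\rho\frac{ds}{\sqrt{2(\mathcal G(\rho)-\mathcal G(s))}}<\infty ,
\end{equation*}
the integrand being integrable at $s=\rho$ because $\mathcal G(\rho)-\mathcal G(s)\sim G(\rho)(\rho-s)$ there with $G(\rho)>0$. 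Restricted to $[-R(\rho),R(\rho)]$, $\psi_\rho$ is $C^2$, equals $\alpha$ at the endpoints, lies in $(\alpha,\beta)$ in the interior, and is strictly concave there (since $\psi_\rho''=-G(\psi_\rho)<0$), hence symmetrically decreasing. Standard continuous dependence on initial data, together with the transversality of the crossing, shows $\rho\mapsto R(\rho)$ is continuous on $(\alpha,\beta)$.

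The remaining — and only quantitative — point is that $R(\rho)\to\infty$ as $\rho\to\beta^-$, a slow–passage–near–the–equilibrium effect at $\psi=\beta$; this is where the regularity of $G$ (inherited from the smoothness of $g$) matters, since the claim can fail for a merely continuous $G$. Fixing $\beta'\in(\alpha,\beta)$ and $L:=\max_{[\beta',\beta]}|G'|$, from $G(\beta)=0$ we get $0\le G(\sigma)\le L(\beta-\sigma)$ on $[\beta',\beta]$, hence $\mathcal G(\rho)-\mathcal G(s)=\int_s^\rho G\le\int_s^\beta G\le\tfrac L2(\beta-s)^2$ for $\beta'\le s\le\rho<\beta$, and therefore
\begin{equation*}
R(\rho)\ \ge\ \int_{\beta'}^\rho\frac{ds}{\sqrt L\,(\beta-s)}\ =\ \frac1{\sqrt L}\,\ln\frac{\beta-\beta'}{\beta-\rho}\ \longrightarrow\ \infty\qquad(\rho\to\beta^-).
\end{equation*}
Fixing any $\rho_1\in(\alpha,\beta)$, for every $R\ge R(\rho_1)$ the intermediate value theorem applied to the continuous map $R(\cdot)$ on $[\rho_1,\beta)$ gives $\rho\in[\rho_1,\beta)$ with $R(\rho)=R$; then $\psi_\rho$ on $[-R,R]$ is the required solution and $\phi:=\Psi^{-1}\circ\psi_\rho\in C^2([-R,R])$ satisfies all the conditions in \eqref{variable-change-1}. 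I expect the blow–up of the time map as $\rho\to\beta$ to be the crux; the change of variables and the continuity of $R(\cdot)$ are routine.
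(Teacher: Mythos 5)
Your proof is correct and follows essentially the same route as the paper: an exponential change of variable (yours, $\psi=-c_1^{-1}e^{-c_1\phi}$, is an affine shift of the paper's $\tilde\phi=e^{-c_1p_-(0)}-e^{-c_1\phi}$) eliminates the quadratic gradient term and reduces the problem to a monostable equation $\psi''+G(\psi)=0$, which is then solved by phase-plane analysis. The only difference is that the paper delegates the phase-plane step to a cited lemma, whereas you carry out the time-map argument in full, including the key estimate $R(\rho)\to\infty$ as $\rho\to\beta^-$, which is done correctly.
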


\begin{proof}
Suppose that there exist a constant $R>0$ and a symmetrically decreasing function $\phi\in C^2([-R,R])$ satisfying \eqref{variable-change-1}. 
Set
$$\tilde{\phi}(x):= \exp \big(-c_1p_-(0)\big)- \exp \big(-c_1\phi(x)\big) \,\hbox{ for } x\in [-R,R],$$
and define
$$\tilde{g}(q): =c_1c_2 \big( \exp\big(-c_1p_-(0)\big)-q \big) g \big(-c_1^{-1} \ln \big( \exp (-c_1p_-(0)\big)- q) \big) $$
for  $q\in [0,\bar{q}]$, where 
$$\bar{q}:=\exp \big(-c_1p_-(0)\big)- \exp \big(-c_1p_+(0)\big). $$
Clearly, we have 
\begin{equation}\label{monostable}
\tilde{g}(0)=\tilde{g}(\bar{q})=0\quad\hbox{and}\quad \tilde{g}(q)>0 \,\hbox{ for } q\in (0,\bar{q}). 
\end{equation}
It is also easily calculated that 
$$\tilde{\phi}'=c_1\exp (-c_1\phi) \phi'  \quad \hbox{and}\quad \tilde{\phi}''=c_1\exp (-c_1\phi)\big(\phi''-c_1(\phi')^2\big). $$
Thus, $\tilde{\phi}\in C^2([-R,R])$ satisfies 
\begin{equation}\label{variable-change-2}
\left\{\baa{l}
\smallskip   \tilde{\phi}''+\tilde{g}(\tilde{\phi})=0 \,\, \hbox{ for }-R<x<R,\quad \tilde{\phi}(\pm R)=0,   \vspace{3pt}\\
0<\tilde{\phi}(x)< \bar{q}\,\,\hbox{ for }-R<x<R.\eaa\right.
\end{equation}
Therefore, to show this lemma, it amounts to prove that for all large $R>0$, there exists a symmetrically decreasing function $\tilde{\phi}\in C^2([-R,R])$ satisfying \eqref{variable-change-2}. In view of  \eqref{monostable}, $\tilde{g}$ is a monostable nonlinearity.  The existence of the solution $\tilde{\phi}$ can be obtained by elementary phase plane analysis (see e.g., \cite[Lemma 4.1]{dlou}). The proof of Lemma \ref{phase-plane} is thus complete.
\end{proof}

\vskip 5pt

We are able to complete the proof of Proposition \ref{existence-bound-interval}.

\begin{proof}[Proof of Proposition \ref{existence-bound-interval}]
We only give the proof in the case where $f(\cdot,u)$ is a $C^2$ function in $u$, 
as for the general case where $f$ satisfies \eqref{assume-lipschitz}, the following analysis is still valid by a standard approximation argument. 

Let $$ c_1:= \tilde{M}_3 \quad\hbox{and}\quad c_2:= \tilde{M}_2^{-1},$$
where $\tilde{M}_2$ and $\tilde{M}_3$ are the positive constants obtained in Lemma \ref{derivative-estimates-pertur}. It follows from Lemma \ref{phase-plane} that for all large $R>0$, there exists a symmetrically decreasing function $\phi\in C^2([-R,R])$ satisfying \eqref{variable-change-1} with such $c_1$ and $c_2$.
This immediately implies
\begin{equation}\label{phi-inequality}
\phi''+ \frac{H_{aa}(t;a)}{H_a(t;a)} (\phi')^2\geq  -\frac{1}{H_a(t;a)} g(\phi) 
\end{equation}
for all $x\in (-R,R)$, $t\in [0,T]$ and $a\in [p_{-}(0), p_{+}(0)]$. 

Next, for each $x\in [-R,R]$, let $\Phi(x,t)$ denote the unique solution of the following ODE
\begin{equation*}
\Phi_t=f(t,\Phi)-g\big(\phi(x)\big) \,\hbox{ for } \,0<t\leq T;  \quad  \Phi(x,0)=\phi(x).
\end{equation*}
It is clear that
$$\Phi(x,t)= H(t; \phi(x)) \,\,\hbox{ for }  0\leq t\leq T,\, -R\leq x\leq R. $$
Since $f(\cdot,u)$ and $g(u)$ are $C^2$ functions in $u$, by the chain rule, it is straightforward to check that $\Phi(x,t)$ is a $C^2$ function in $x$, and there holds
\begin{equation*}
\partial_{xx}\Phi(x,t) = H_a (t;\phi(x)) \phi''(x)+  H_{aa}(t;\phi(x)) (\phi')^2(x) \,\,\hbox{ for } x\in (-R,R),\,t\in[0,T].  
\end{equation*}
Moreover, since $p_{-}(0)<\phi(x)<  p_{+}(0)$  for $-R<x<R$, it follows from \eqref{phi-inequality} that 
\begin{equation*}
\partial_{xx}\Phi(x,t)\geq - g(\phi(x)) \,\hbox{ for }  x\in (-R,R),\,t\in [0,T].
\end{equation*}
Thus, $\Phi(x,t)$ satisfies
\begin{equation*}
\left\{\baa{ll}
\smallskip \Phi_t \leq \partial_{xx}\Phi+f(t,\Phi), & \hbox{ for } \,-R<x<R,\,0<t\leq T, \vspace{3pt}\\
\smallskip \Phi(x,0)=\phi(x), & \hbox{ for }\, \,-R\leq x\leq R, \vspace{3pt}\\
\smallskip \Phi(\pm R, t)=p_-(t), & \hbox{ for }\,0\leq t\leq T.\eaa\right.
\end{equation*}
It then follows from the comparison principle that 
$$w(x,t;\phi) \geq \Phi(x,t) \,\hbox{ for } x\in [-R,R],\,t\in[0,T],$$
where $w(x,t;\phi)$ is the solution of the following problem
\begin{equation}\label{dirichlet}
\left\{\baa{ll}
\smallskip  w_t=w_{xx}+f(t,w), & \hbox{ for } \,-R<x<R,\,t>0, \vspace{3pt}\\
\smallskip w(x,0)=\phi(x), & \hbox{ for }\, \,-R\leq x\leq R, \vspace{3pt}\\ 
w(\pm R, t)=p_-(t), & \hbox{ for }\, t\geq 0.\eaa\right.
\end{equation}
Furthermore, since $\Phi(x;T)\geq \phi(x)$ for each $x\in [-R,R]$ by Lemma \ref{perturb}, it follows that 
$$w(x,T;\phi) \geq  \phi(x) \,\hbox{ for } x\in [-R,R].$$
Applying the comparison principle to \eqref{dirichlet}, we obtain that $w(x,t+mT;\phi)$ is nondecreasing in $m\in\N$. By standard parabolic estimates, 
\begin{equation*}
w(x,t+mT;\phi)\to \varphi(x,t)\,\,\hbox{ as } m\to\infty \hbox{ in } C_{loc}^{2,1}(\R\times\R),   
\end{equation*}
where $\varphi(x,t)$ is a solution of \eqref{steady-state-equation}. Clearly, $\varphi(x,t)$ is symmetrically decreasing. The proof of Proposition \ref{existence-bound-interval} is thus complete.  
\end{proof}

\begin{remark}\label{re-steady-state}
{\rm  It is easily seen from the proof of Proposition \ref{existence-bound-interval} that similar existence result holds if $p_{-}$ is unstable from below. More precisely, if $p_{-}>p_{+}$ and the condition \eqref{unstablec} is replaced by that for each $a\in (p_+(0),p_-(0))$, 
\begin{equation*}
h(t+mT;a) \searrow  p_{+}(t) \,\hbox{ as } m\to\infty \hbox{ locally uniformly in }  t\in\R,
\end{equation*}
then for all sufficiently large $R$, the following problem 
\begin{equation*}
\left\{\baa{ll}
\smallskip  \varphi_t=\varphi_{xx}+f(t,\varphi), & \hbox{ for } \,-R<x<R,\,t\in\R, \vspace{3pt}\\
\smallskip \varphi(x, t+T)=\varphi(x,T), & \hbox{ for }\,-R\leq x\leq R,\,t\in\R, \vspace{3pt}\\
\smallskip p_+(t) <\varphi(x,t)<p_{-}(t), & \hbox{ for }\,-R< x< R,\,t\in\R, \vspace{3pt}\\
\varphi(\pm R, t)=p_-(t), & \hbox{ for }\, t\in\R ,\eaa\right.
\end{equation*}
admits a symmetrically increasing solution. }
\end{remark}

\vskip 15pt
\SE{Proof of Theorems \ref{mainconv} and \ref{theorem-non-degenerate}} 
In this section, we give the proof of our main results Theorems \ref{mainconv} and \ref{theorem-non-degenerate}. We already know from Lemma \ref{symmprop} that there exists $x_0\in\R$ such that every element in $\omega(u)$ is symmetric with respect to $x=x_0$. In this section and the following section,  let $x_0$ denote this point of symmetry.

\subsection{General nonlinearity}
This section is devoted to the proof of Theorems \ref{mainconv} for problem \eqref{E} with general nonlinearity, that is, $f$ is only assumed to satisfy \eqref{0state}, \eqref{tperiod} and \eqref{assume-lipschitz}. The proof is based on the following key observation. 

\vskip 5pt

\begin{lemma}\label{lem:omega-max}
Let  $\varphi$  be an element of $\omega(u)$ such that
\[
   \varphi(x_0) = \max \big\{ \phi(x_0) \mid \phi\in \omega(u)\big\} 
\]
and let $v_\varphi$ be the $\omega$-limit solution through 
$\varphi$.  Then 
\[
   v_\varphi(x_0,t)  > v(x_0,t) \, \hbox{ for } t\in \R
\]
for any $\omega$-limit solution $v\ne v_\varphi$. Moreover, either of the following holds:
\begin{itemize}
 \item[{\rm (I)}] 
  $\varphi$ is constant, and $v_\varphi=v_\varphi(t)\in \mathcal{X}_{per}$; 
 
\item[{\rm (II)}]   
$v_\varphi(x,t)$ is a 
symmetrically decreasing $T$-periodic solution of \eqref{ground} that 
converges to some $p\in\mathcal{X}_{per}$ as  $x \to \pm\infty$.
\end{itemize}
\end{lemma}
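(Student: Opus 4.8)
The plan is to pick $\varphi \in \omega(u)$ attaining the maximum value of $\phi(x_0)$ over $\omega(u)$ — such $\varphi$ exists because $\omega(u)$ is compact in $L^\infty_{loc}$ and the evaluation $\phi \mapsto \phi(x_0)$ is continuous — and then establish the strict inequality $v_\varphi(x_0,t) > v(x_0,t)$ first. Fix any $\omega$-limit solution $v \ne v_\varphi$. By maximality, $v(x_0,0) \le v_\varphi(x_0,0)$; if equality held at $t=0$, then since both $v$ and $v_\varphi$ are symmetric with respect to $x_0$ (Lemma \ref{symmprop}) we would have $v_x(x_0,0) = (v_\varphi)_x(x_0,0) = 0$, so $x=x_0$ is a degenerate zero of $w := v_\varphi - v$, which solves a linear equation of the form \eqref{zeroeq}; by Lemma \ref{zero2} this forces $w \equiv 0$, i.e. $v = v_\varphi$, a contradiction. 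Hence $v(x_0,0) < v_\varphi(x_0,0)$. The same reasoning applied to the shifted $\omega$-limit solutions $v(\cdot,\cdot+mT)$ and to arbitrary real times $t$ (every $v(\cdot,t)$, $v_\varphi(\cdot,t)$ is again an $\omega$-limit-type profile symmetric about $x_0$) gives $v(x_0,t) < v_\varphi(x_0,t)$ for all $t \in \R$; one must note here that $\omega(u)$ being invariant under the period map means $v_\varphi(\cdot,t)$ with $t = mT$ lies in $\omega(u)$ and still attains the max, and for intermediate $t$ one compares $v$ with $v_\varphi$ directly via the degenerate-zero argument without invoking the maximality again.

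Next I would apply the dichotomy of Lemma \ref{conorsym} to $v_\varphi$: either $v_\varphi$ is spatially homogeneous, or it is symmetrically decreasing with respect to $x_0$. In the first case $v_\varphi = v_\varphi(t)$ is a solution of \eqref{ODE}; to land in case (I) I must show it is $T$-periodic, i.e. $v_\varphi \in \mathcal X_{per}$. This is where the strict maximality is used again: if $v_\varphi$ were not $T$-periodic, then by Lemma \ref{omega-sol}(b) the sequence $v_\varphi(x_0,t+mT)$ would be strictly monotone in $m$ and converge as $m \to +\infty$ to some $T$-periodic $\omega$-limit solution $V_+$ with $V_+(x_0,t)$ either strictly larger than $v_\varphi(x_0,t)$ (if increasing) — contradicting maximality of $v_\varphi(x_0,\cdot)$ among $\omega$-limit solutions — or strictly smaller (if decreasing), in which case going to $m \to -\infty$ produces $V_-$ with $V_-(x_0,t) > v_\varphi(x_0,t)$, again contradicting maximality. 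So $v_\varphi$ is $T$-periodic, giving (I).

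In the remaining case $v_\varphi$ is symmetrically decreasing about $x_0$; then its base $p(t) := \lim_{|x|\to\infty} v_\varphi(x,t)$ exists and solves \eqref{ODE}. To obtain (II) I must show $v_\varphi$ is $T$-periodic (whence $p \in \mathcal X_{per}$). The argument mirrors the homogeneous case: if $v_\varphi$ were not $T$-periodic, Lemma \ref{omega-sol}(b) gives strict monotonicity of $m \mapsto v_\varphi(x_0,t+mT)$ and convergence to $T$-periodic $\omega$-limit solutions $V_\pm$ as $m \to \pm\infty$; whichever direction of monotonicity occurs, one of $V_\pm$ satisfies $V_\pm(x_0,t) > v_\varphi(x_0,t)$, contradicting the maximality of $v_\varphi(x_0,\cdot)$ that we established in the first paragraph. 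Hence $v_\varphi$ is $T$-periodic and symmetrically decreasing, and its base $p$ is a $T$-periodic solution of \eqref{ODE}, i.e. $p \in \mathcal X_{per}$; this is exactly case (II).

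The main obstacle I anticipate is the bookkeeping around time shifts: one must be careful that "$\varphi$ maximizes $\phi(x_0)$ over $\omega(u)$" and "$v_\varphi(x_0,\cdot)$ maximizes over all $\omega$-limit solutions" are genuinely equivalent, using that $\omega(u)$ is exactly $\{U(\cdot,mT): m\in\Z,\ U\in\tilde\omega(u)\}$ and is invariant under the period map, together with $T$-periodicity of the limiting objects $V_\pm$ produced by Lemma \ref{omega-sol}. Once that equivalence is pinned down, every step reduces to the degenerate-zero lemmas (Lemmas \ref{zero2}, \ref{omega-sol}, \ref{conorsym}) already available.
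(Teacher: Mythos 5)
Your overall skeleton matches the paper's intended argument (the paper disposes of this lemma in one line, citing Lemmas \ref{lem:periodic-tangential}--\ref{omega-sol}), and your paragraphs establishing (I) and (II) are sound: there you only need the non-strict bound $V_\pm(x_0,0)\le\varphi(x_0)$, which follows from the definition of $\varphi$ together with the closedness of $\tilde\omega(u)$, so Lemma \ref{omega-sol}(b) indeed yields a contradiction unless $v_\varphi$ is $T$-periodic. However, your first paragraph contains a genuine gap: you invoke Lemma \ref{zero2} to conclude $w:=v_\varphi-v\equiv 0$ from the fact that $x_0$ is a degenerate zero of $w(\cdot,0)$, but Lemma \ref{zero2} requires $w(x_0,t)=w_x(x_0,t)=0$ for \emph{every} $t$ in a time interval, not at a single time. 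A degenerate zero at one instant is perfectly compatible with $w\not\equiv 0$ (by Lemma \ref{zero1}(iii) it merely forces one drop of the zero number), so the step ``equality at $t=0$ implies $v=v_\varphi$'' does not follow from the tool you cite.

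The correct route, which is the one the paper uses elsewhere (Lemma \ref{lem:periodic-tangential} and Step 1 of Proposition \ref{hereconnect}), requires knowing that one of the two solutions is $T$-periodic: then $\mathcal{Z}(u(\cdot,t)-v_\varphi(\cdot,t))$ is finite and nonincreasing by Lemma \ref{zeorperiod}, hence the zeros of $u(\cdot,t+k_jT)-v_\varphi(\cdot,t)$ are eventually all simple, and Lemma \ref{zero3} applied to the $C^1_{loc}$ limit $v-v_\varphi$ forces either $v(\cdot,t_1)\equiv v_\varphi(\cdot,t_1)$ (whence $v\equiv v_\varphi$ by Lemma \ref{zeroequiv}) or only simple zeros, contradicting the degenerate zero at $x_0$. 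So you should reverse the order of your argument: first prove (I)/(II) (which, as noted, needs only the definition of $\varphi$), and only then derive the strict inequality $v_\varphi(x_0,t)>v(x_0,t)$ for all $t\in\R$, using the $T$-periodicity of $v_\varphi$ just established, the symmetry of both profiles from Lemma \ref{symmprop}, and the intermediate value theorem to produce a tangency time if the inequality ever failed. With that reordering and the correct tangency lemma the proof goes through.
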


\begin{proof}
The proof of this lemma follows immediately from Lemmas \ref{lem:periodic-tangential}-\ref{omega-sol}.
\end{proof}

\vskip 5pt
In what follows, we divide the proof of Theorem \ref{mainconv} into two cases according to the behavior of $v_{\varphi}$ stated in the above lemma. We first consider the case where $v_\varphi=v_\varphi(t)$ is a spatially homogeneous solution.

\vskip 5pt

\begin{prop}\label{prop:flat}
Suppose that the case {\rm (I)} in Lemma \ref{lem:omega-max} holds. 
Then we have $\omega(u)=\{\varphi\}$. 
Consequently, for each $t>0$,
\begin{equation}\label{prop:flat-con}
u(x,t+mT) \to v_\varphi(t)\, \hbox{ as } m \to\infty \hbox{ in } L^{\infty}_{loc}(\R).
\end{equation}
\end{prop}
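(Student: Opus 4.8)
The plan is to prove that every $\omega$-limit solution of \eqref{E} coincides with $v_\varphi$, which for brevity I write $p(t)$; note $p$ is spatially homogeneous, hence also a solution of \eqref{equation}, so $u$ may be compared with it directly. First dispose of trivial cases: if $u_0\equiv 0$ or $p\equiv 0$, then since every $\phi\in\omega(u)$ is nonnegative and, by Lemmas~\ref{symmprop} and \ref{lem:omega-max}, symmetrically nonincreasing about $x_0$ with $\phi(x_0)\le p(0)=0$, we get $\omega(u)=\{0\}=\{\varphi\}$; so assume $p(t)>0$ for all $t$. Next I would establish the upper barrier $\limsup_{t\to\infty}\sup_{x\in\R}\big(u(x,t)-p(t)\big)\le 0$, which in particular forces $V\le p$ on $\R\times\R$ for every $\omega$-limit solution $V$. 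If this failed, pick $t_n\to\infty$ and, using Lemma~\ref{monotone} to confine the spatial maximum of $u(\cdot,t_n)$ to $[L_1,L_2]$, points $x_n\in[L_1,L_2]$ with $u(x_n,t_n)\ge p(t_n)+\varepsilon_0$; writing $t_n=s_n+m_nT$ and extracting a subsequence with $s_n\to s$, $x_n\to x^\ast$, $u(\cdot,\cdot+m_nT)\to V$ in $C^{2,1}_{loc}$, one gets an $\omega$-limit solution with $V(x^\ast,s)\ge p(s)+\varepsilon_0$, contradicting $V(x^\ast,s)\le V(x_0,s)\le p(s)$ (symmetry and the strict inequality in Lemma~\ref{lem:omega-max}).

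Now set $m_0:=\min\{\phi(x_0):\phi\in\omega(u)\}$, attained since $\omega(u)$ is compact in $L^\infty_{loc}(\R)$. If $m_0=p(0)$, every $\phi\in\omega(u)$ has $\phi(x_0)=p(0)=v_\varphi(x_0,0)$, so by Lemma~\ref{lem:omega-max} its $\omega$-limit solution is $v_\varphi=p$, whence $\phi\equiv p(0)$ and we are done. Otherwise $m_0<p(0)$; choose $v_\ast\in\tilde\omega(u)$ with $v_\ast(x_0,0)=m_0$. By Lemma~\ref{omega-sol} together with the minimality of $m_0$ — if $v_\ast(x_0,mT)$ were strictly monotone in $m$, some translate $v_\ast(\cdot,mT)\in\omega(u)$ would take a value below $m_0$ at $x_0$ — the solution $v_\ast$ is $T$-periodic; since $v_\ast\neq p$, Lemma~\ref{lem:omega-max} gives $\max_x v_\ast(x,t)=v_\ast(x_0,t)<p(t)$ for all $t$, so by periodicity there is $\delta>0$ with $v_\ast\le p-\delta$ on $\R\times\R$, and (evaluating at a minimum point of $p$) also $m_0<p_{\min}:=\min_t p(t)$.

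Choose $m_n\to\infty$ with $u(\cdot,\cdot+m_nT)\to v_\ast$ and $l_j\to\infty$ with $u(\cdot,\cdot+l_jT)\to p$, both in $C^{2,1}_{loc}$. Given small $\eta>0$: since $u(\cdot,m_nT)\to m_0$ uniformly on $[L_1,L_2]\ni x_0$ while $u(\cdot,m_nT)$ is monotone on $(-\infty,L_1)$ and on $(L_2,\infty)$ with $u(L_i,m_nT)\to m_0$ (Lemma~\ref{monotone}), one gets $u(\cdot,m_nT)<m_0+\eta$ on \emph{all} of $\R$ for $n$ large, whereas $u(x_0,l_jT)\to p(0)$. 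Comparison with \eqref{ODE} then yields $u(x_0,l_jT)\le h\big((l_j-m_n)T;\,m_0+\eta\big)$ for $l_j>m_n$ and $n$ large. I would then analyse $h(kT;m_0+\eta)$ as $k\to\infty$ by the position of $v_\ast$ among the periodic solutions of \eqref{ODE}: if $v_\ast$ is stable from above then $\limsup_k h(kT;m_0+\eta)<p(0)$ for $\eta$ small; if $v_\ast$ is unstable from above, then $h(kT;m_0+\eta)\nearrow\bar p(0)$ for some $\bar p\in\mathcal{X}_{per}$ with $v_\ast<\bar p\le p$, and if $\bar p<p$ the same conclusion holds. In each of these cases $u(x_0,l_jT)\le p(0)-c$ for large $j$, contradicting $u(x_0,l_jT)\to p(0)$; this forces $m_0=p(0)$, hence $\omega(u)=\{\varphi\}$, and \eqref{prop:flat-con} follows from the definition of $\omega(u)$ and parabolic estimates.

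The hard part is the one remaining possibility, $\bar p=p$: here $p$ is stable from below, $v_\ast$ is unstable from above, and \eqref{ODE} has no periodic solution strictly between them, so \eqref{unstablec} holds with $p_+=p$, $p_-=v_\ast$. The ODE estimate now only gives $u(x_0,l_jT)\le h((l_j-m_n)T;m_0+\eta)\nearrow p(0)$, which is consistent, and one must instead rule out that a solution coming uniformly close to $p$ on large intervals (as $u(\cdot,l_jT)$ does) can later fall back near $v_\ast$. I would do this by inserting below $u(\cdot,l_jT)$ a wide plateau of height just under $p(0)$, estimating the finite speed at which its edge can recede, and comparing against the symmetrically decreasing Dirichlet solution $\varphi_R$ of \eqref{steady-state-equation} provided for large $R$ by Proposition~\ref{existence-bound-interval} (with $\varphi_R(x_0,\cdot)\to p$ as $R\to\infty$), together with a continuity/bootstrap argument for the interface, to conclude that the set where $u$ exceeds $v_\ast$ can only expand, so that $\liminf_{t\to\infty}u(x_0,t)\ge p_{\min}>m_0$ — contradicting $u(x_0,m_nT)\to m_0$. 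Equivalently, one may argue via the zero-number of $u-v_\ast$: by Lemmas~\ref{zeorperiod} and \ref{monotone} it is eventually the constant $2$, so $\{x:u(x,t)>v_\ast(t)\}$ is a single interval whose simple endpoints vary continuously, while the previous step forces its right endpoint to oscillate with unbounded amplitude, which is impossible under this monotone-invasion dynamics. Making this final step rigorous is the principal technical obstacle of the proof.
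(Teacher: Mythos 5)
Your argument is incomplete precisely where the real difficulty lies, and you say so yourself. The reduction is fine up to a point: the upper barrier $\limsup_{t\to\infty}\sup_x(u-p)\le 0$ is correct, and the identification of a minimal $T$-periodic $\omega$-limit solution $v_\ast$ with $v_\ast(x_0,\cdot)<p$ is legitimate (via the invariance of $\omega(u)$ under time-$T$ shifts and Lemma \ref{omega-sol}). But the ODE comparison $u(x_0,l_jT)\le h((l_j-m_n)T;m_0+\eta)$ only yields a contradiction when the orbit $h(kT;m_0+\eta)$ stays bounded away from $p(0)$; in the case $h(kT;m_0+\eta)\nearrow p(0)$ --- which is not a degenerate corner but the generic substantive situation (e.g.\ bistable $f$ with $v_\ast=q_1$ and $p=p_1$) --- the bound is vacuous, and everything you offer there (plateau subsolutions, the Dirichlet solutions of Proposition \ref{existence-bound-interval}, a ``monotone invasion'' of the set $\{u>v_\ast\}$) is a sketch whose key claims are themselves unproved. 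In particular, the zero-number variant does not close: knowing $\mathcal{Z}(u(\cdot,t)-v_\ast(t))\equiv 2$ with simple zeros makes $\{u(\cdot,t)>v_\ast(t)\}$ a single interval, but at the times $m_nT$ one only has $u(x_0,m_nT)\to v_\ast(0)$ from an unspecified side, so the right endpoint is not forced to ``oscillate with unbounded amplitude.'' A secondary slip: you phrase the dichotomy in terms of stability of $v_\ast$ from above, but $v_\ast$ need not be spatially constant, and $v_\ast(x_0,\cdot)$ is not an ODE solution; the case split must be made on the position of $m_0$ relative to $\mathcal{X}_{per}$.

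For comparison, the paper's proof avoids this case analysis entirely by working from the \emph{maximal} element $\varphi$, which by hypothesis (I) is spatially constant. If $\omega(u)$ contained a second element, then at some time $m_1T$ one has $u(\cdot,m_1T)\le M-\delta$ on all of $\R$ (using Lemma \ref{monotone}), while at a later time $m_2T$ the solution is within $\delta/2$ of the constant limit $V_M$ on a compact set containing $[{\rm spt}(u_0)]$ and below $V_M-\delta/2$ outside it; a comparison on the exterior region then gives $u(\cdot,(m_1+m_2)T)>u(\cdot,m_1T)$ \emph{everywhere}, hence $u(\cdot,t+m_2T)>u(\cdot,t)$ for all $t\ge m_1T$, and this eventual monotonicity together with the subsequence converging to $M$ forces $\omega(u)=\{M\}$. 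That global ordering is exactly what your ``push up from the minimal element'' strategy cannot produce, which is why you are left with the unresolved hard case.
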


\begin{proof}
Suppose that $\omega(u)$ contains another element 
$\tilde{\varphi}$.  Then, by Lemma \ref{lem:omega-max}, 
$\tilde{\varphi}(x_0)<\varphi(x_0)$. Since $\tilde{\varphi}(x)$ is either a 
constant or symmetrically decreasing by Lemma \ref{conorsym}, and since 
$M:=\varphi$ is constant, we have 
$$ \tilde{\varphi}(x) < M \,\,  \hbox{ for } x\in\R. $$
Since $\tilde{\varphi}$ is an $\omega$-limit point, in view of Lemma \ref{monotone}, 
we have 
\begin{equation}\label{ineq1}
u(x,m_1T)\leq M-\delta\,\,  \hbox{ for } x\in\R
\end{equation}
for some integer $m_1>0$ and some constant $\delta>0$. 
Now, choose $L>0$ large enough so that 
\begin{equation}\label{prop:flat-support}
[{\rm spt}(u_0)]\subset [x_0-L,x_0+L]
\end{equation}
and that 
\begin{equation}\label{ineq2}
u(x,t)\leq V_M(t)-\delta \quad\ \hbox{for} 
\ \ |x-x_0|\geq L,\; t\in [0,m_1 T],
\end{equation}
where $V_M(t)$ denotes the $\omega$-limit solution 
through $M$, which is a $T$-periodic function of $t$ 
by Lemma \ref{lem:omega-max}.  Such a constant $L$ 
exists since $u(x,t)$ decays to $0$ as $x\to\pm\infty$ 
uniformly in $t\in [0,m_1 T]$ because of Lemma \ref{largex}. 
Next, since $V_M$ is an $\omega$-limit solution, we 
can find an integer $m_2>0$ such that
\begin{equation}\label{ineq3}
\Big|u(x,t+m_2 T)-V_M(t)\Big|\leq \frac{\delta}{2}\quad\ 
\hbox{for} \ \ |x-x_0|\leq L,\; t\in [0,m_1 T].
\end{equation}
Combining \eqref{ineq1} and \eqref{ineq3}, and 
setting $t=m_1 T$, we get
\begin{equation}\label{ineq4}
u(x,(m_1+m_2)T)> u(x,m_1 T) \quad\ \hbox{for} \ \ 
|x-x_0|\leq L.
\end{equation}
Also, combining \eqref{ineq2} and \eqref{ineq3} 
and setting $x=x_0\pm L$, we obtain
\[
u(x_0\pm L, t+m_2 T)>u(x_0\pm L,t)
 \quad\ \hbox{for} \ \ t\in[0,m_1 T].
\]
This, together with \eqref{prop:flat-support} and the comparison 
principle implies
\[
u(x, t+m_2 T)>u(x,t)
 \quad\ \hbox{for} \ \ |x-x_0|\geq L,\; t\in[0,m_1 T].
\]
Setting $t=m_1 T$ in the above inequality and combining 
it with \eqref{ineq4}, we obtain
\[
u(x,(m_1+m_2)T)> u(x,m_1 T) \quad\ \hbox{for} \ \ 
x\in\R.
\]
Thus, by the comparison principle, we have
\begin{equation*}\label{comparison}
u(x,t+m_2T)> u(x,t) \quad\ \hbox{for} \ \ 
x\in\R,\;t\in[m_1T,\infty).
\end{equation*}
Consequently,
\begin{equation}\label{monotone-1}
u(x,t)<u(x,t+m_2T)<u(x,t+2m_2 T)<u(x,t+3m_2 T)<\cdots
\end{equation}
for $x\in\R,\;t\geq m_1T$. 

Next, let $\{n_j\}_{j\in\N}$ be a sequnce of integers such that 
$$ u(x,n_j T)\to M \,\,\hbox{ as } n_j\to\infty  \,\,\hbox{ in } L_{loc}^{\infty}(\R). $$ 
Since $V_M(t)$ is a $T$-periodic solution, we have
$$ u(x,kT+n_j T)\to M \,\,\hbox{ as } n_j\to\infty  \,\,\hbox{ in } L_{loc}^{\infty}(\R). $$ 
for $k=0, 1,2,\ldots, m_2-1$. Combining this and \eqref{monotone-1}, 
we see that
$$u(x, m T)\to M\,\,\hbox{ as } m\to\infty  \,\,\hbox{ in } L_{loc}^{\infty}(\R). $$ 
This means that $M$ is the only element of $\omega(u)$, 
contradicting the assumption at the beginning of the 
present proof. Thus, $\omega(u)$ cannot contain more 
than one element. The convergence \eqref{prop:flat-con} then 
follows by standard parabolic estimates.  The proposition is proved.
\end{proof}

\vskip 5pt

Next, we turn to the case where $v_\varphi(x,t)$ is symmetrically decreasing. Before stating the conclusion, let us prove two auxiliary lemmas. We begin with the following observation on the intersection numbers of any two $T$-periodic $\omega$-limit solutions.

\vskip 5pt

\begin{lemma}\label{periodic-number} 
Let $v_1$ and $v_2$ be two $T$-periodic $\omega$-limit solutions of \eqref{E}. Then one of the following holds:
\begin{itemize}
 \item[{\rm (i)}] $v_1\equiv v_2$;
 \item[{\rm (ii)}]  $\mathcal{Z}(v_1(\cdot,t)-v_2(\cdot,t))=0$ for each  $t\in\R$;
 \item[{\rm (iii)}]  $\mathcal{Z}(v_1(\cdot,t)-v_2(\cdot,t))=2$ for each  $t\in\R$.
 \end{itemize}
\end{lemma}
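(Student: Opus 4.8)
The plan is to prove the trichotomy by first disposing of the case $v_1\equiv v_2$, which is exactly alternative (i), and then showing that whenever $v_1\not\equiv v_2$ the intersection number $\mathcal{Z}(v_1(\cdot,t)-v_2(\cdot,t))$ equals $0$ or $2$ for every $t\in\R$. First I would record that, being $\omega$-limit solutions, both $v_1$ and $v_2$ are $T$-periodic, nonnegative entire solutions of \eqref{ground} which are symmetric with respect to $x_0$ by Lemma~\ref{symmprop}. Applying Lemma~\ref{lem:periodic-tangential} (with either one playing the role of ``$v_1$'' there, since the hypotheses are met by both) gives, under the standing assumption $v_1\not\equiv v_2$, that $\mathcal{Z}(v_1(\cdot,t)-v_2(\cdot,t))<\infty$ for every $t$, and moreover that $v_1(x_0,t)\neq v_2(x_0,t)$ for every $t$ (otherwise $v_1\equiv v_2$). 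By continuity of $t\mapsto v_1(x_0,t)-v_2(x_0,t)$ we may assume, without loss of generality, that $v_1(x_0,t)>v_2(x_0,t)$ for all $t\in\R$.

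Set $w:=v_1-v_2$; since $v_1,v_2$ are bounded solutions of the same equation \eqref{ground}, $w$ satisfies a linear equation of the form \eqref{zeroeq} with bounded coefficient. By Lemma~\ref{zero1}(ii) the map $t\mapsto\mathcal{Z}(w(\cdot,t))$ is nonincreasing, and by $T$-periodicity of $v_1,v_2$ it is $T$-periodic, hence equal to a constant $k\in\N\cup\{0\}$. Lemma~\ref{zero1}(iii) then forces $w(\cdot,t)$ to have no degenerate zero for any $t$, so $w(\cdot,t)$ has exactly $k$ simple zeros for every $t$. Since both $v_i$ satisfy $v_i(2x_0-x,t)=v_i(x,t)$, we have $w(2x_0-x,t)=w(x,t)$, so the zero set of $w(\cdot,t)$ is symmetric about $x_0$; and since $w(x_0,t)>0$, the point $x_0$ is not a zero. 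Consequently $k=2\ell$, where $\ell$ is the number of zeros of $w(\cdot,t)$ in $(x_0,\infty)$, and it remains only to prove $\ell\le 1$.

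By Lemma~\ref{conorsym} each of $v_1,v_2$ is either spatially constant or symmetrically decreasing with respect to $x_0$. If both are constant, $w$ is independent of $x$ and never vanishes, so $\ell=0$. If exactly one of them is constant, say $v_2(x,t)=p_2(t)$ with $p_2$ a solution of \eqref{ODE}, then for each $t$ the function $x\mapsto w(x,t)=v_1(x,t)-p_2(t)$ is strictly monotone on $[x_0,\infty)$ (because $v_1$ is symmetrically decreasing), hence has at most one zero there, and $\ell\le 1$. The remaining, and main, case is when $v_1$ and $v_2$ are both symmetrically decreasing, with bases $p_1,p_2\in\mathcal{X}_{per}$: here $w(x,t)\to p_1(t)-p_2(t)$ as $x\to+\infty$, and since $p_1,p_2$ are solutions of the scalar ODE \eqref{ODE} they never cross, so this limit has a fixed sign (or is identically $0$). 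The \emph{crux} is to show that $w(\cdot,t)$ changes sign at most once on $(x_0,\infty)$. This cannot be read off from the monotonicity of the two profiles alone---two strictly decreasing profiles may cross arbitrarily often---and in the time-periodic setting the phase-plane/energy argument available for the autonomous problem is unavailable; instead one compares $v_1$ and $v_2$ with their bases $p_1,p_2$ (regarded as spatially homogeneous entire solutions) and with $u$ itself, or slides $v_1(\cdot,t)$ against the translates $v_2(\cdot+d,t)$ and lets $d\to+\infty$ (where the limiting profile $v_1(\cdot,t)-p_2(t)$ has at most two zeros), using that all the relevant intersection numbers are finite and the structural information on $\tilde{\omega}(u)$ from Lemma~\ref{omega-sol}. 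I expect this last step to be the main obstacle; once $\ell\le 1$ is established, the constancy proved above gives $k\in\{0,2\}$, which is precisely alternative (ii) or (iii), completing the proof.
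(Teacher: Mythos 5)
Your setup is correct and matches the paper's: Lemma \ref{lem:periodic-tangential} gives finiteness of the intersection number and $v_1(x_0,t)\neq v_2(x_0,t)$; monotonicity plus $T$-periodicity make $t\mapsto\mathcal{Z}(v_1(\cdot,t)-v_2(\cdot,t))$ a constant even integer $2\ell$ with all zeros simple; and the cases where at least one of $v_1,v_2$ is spatially constant are dispatched by monotonicity of the profiles. But the statement you yourself identify as the crux --- that when both solutions are symmetrically decreasing one has $\ell\le 1$ --- is exactly the content of the lemma in the only nontrivial case, and you do not prove it. You offer two candidate strategies in a single sentence (comparison with the bases, or sliding $v_2(\cdot+d,t)$ against $v_1$ and letting $d\to+\infty$) without carrying either out, and the second one has a real obstruction: semicontinuity of $\mathcal{Z}$ under limits (\eqref{semi-continuous}) only bounds the zero number of the \emph{limit} by the $\liminf$ of the zero numbers along the sequence, so knowing that the limiting profile $v_1(\cdot,t)-p_2(t)$ has at most two sign changes gives no upper bound on $\mathcal{Z}(v_1(\cdot,t)-v_2(\cdot+d,t))$ for finite $d$, let alone for $d=0$. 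So the proposal, as written, has a genuine gap at the decisive step.

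For comparison, the paper's argument is a \emph{local} sliding/tangency argument rather than a limit $d\to+\infty$. Assuming $m=2\ell>2$ and $v_1(x_0,\cdot)>v_2(x_0,\cdot)$, the symmetry and simplicity of the zeros produce two consecutive $T$-periodic zero curves $\xi_-(t)<\xi_+(t)<x_0$ between which $v_1-v_2<0$. One then translates the \emph{larger} solution: since $v_1$ is strictly increasing on $(-\infty,x_0]$, a suitable shift $d_0>0$ makes $v_1(\cdot+d_0,t_0)\ge v_2(\cdot,t_0)$ on $[\xi_-(t_0),\xi_+(t_0)]$ with an interior tangency at some $y_0$, while on the lateral boundaries $x=\xi_\pm(t)$ one has the strict inequality $v_2(\xi_\pm(t),t)=v_1(\xi_\pm(t),t)<v_1(\xi_\pm(t)+d_0,t)$ for all $t$. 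The strong maximum principle applied to $v_2(x,t)-v_1(x+d_0,t)$ on the region $\{\xi_-(t)\le x\le\xi_+(t),\ t\ge t_0\}$ then forces strict separation for $t>t_0$, which contradicts the tangency that must recur at $t_0+T$ by $T$-periodicity of $v_1$, $v_2$ and of the curves $\xi_\pm$. This is the ingredient your proof is missing; without it the trichotomy is not established.
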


\begin{proof}
We assume that $v_1\not\equiv v_2 $, and prove either (ii) or (iii) holds.  
By Lemma \ref{lem:periodic-tangential}, we have $v_1(x_0,t)\neq v_2(x_0,t)$ for each $t\in\R$, 
and 
$$0\leq \mathcal{Z}(v_1(\cdot,t)-v_2(\cdot,t))<\infty \,\,\hbox{ for } t\in\R.  $$
Without loss of generality, we may assume that 
\begin{equation}\label{v1geqv2}
v_1(x_0,t)>v_2(x_0,t)\,\,\hbox{ for } t\in\R. 
\end{equation}
Since $\mathcal{Z}(v_1(\cdot,t)-v_2(\cdot,t))$ is nonincreasing in $t$ and since $v_1(\cdot,t)$ and $v_2(\cdot,t)$ are $T$-periodic, it follows from Lemma \ref{zero1} that $v_1(\cdot,t)-v_2(\cdot,t)$ has a fixed number of zeros for all $t\in\R$, and all of them are simple. 

Assume by contradiction that neither (ii) nor (iii) holds. Then, since $v_1(x,t)$ and $v_2(x,t)$ are both symmetric with respect to $x_0$, we have 
\begin{equation}\label{zeronumber4}
m:=\mathcal{Z}(v_1(\cdot,t)-v_2(\cdot,t)) >2 \,\,\hbox{ for } t\in\R, 
\end{equation}
and $m$ is an even constant. Denote these zeros by 
$$\xi_1(t) <\xi_2(t)<\cdots<\xi_m(t).$$
Clearly, for each $1\leq i \leq m$, $\xi_i(t)$ is a continuous and bounded function of $t\in\R$. Moreover, since $v_1(\cdot,t)-v_2(\cdot,t)$  is $T$-periodic,  $\xi_i(t)$ is $T$-periodic.

Next, by \eqref{v1geqv2} and \eqref{zeronumber4}, we can choose $\xi_{\pm}(t) \subset \{\xi_i(t)\}_{1\leq i\leq m}$ satisfying that, for each $t\in\R$, $-\infty< \xi_-(t) <\xi_+(t)<x_0$, 
$$v_1(\xi_{\pm}(t),t)-v_2(\xi_{\pm}(t),t)=0 \quad\hbox{and}\quad v_1(x,t)-v_2(x,t)<0\,\hbox{ for }  \xi_-(t) <x<\xi_+(t).$$ 
Let $t_0\in\R$ be fixed.  Then we can find $d_0>0$ such that $v_1(\cdot+d_0,t_0)$ and $v_2(\cdot,t_0)$ are tangent at some point $y_0 \in (\xi_-(t_0), \xi_+(t_0))$. More precisely, we have
$$v_2(x,t_0)\leq v_1(x+d_0,t_0) \,\hbox{ for } \,\xi_-(t_0) \leq x\leq \xi_+(t_0),$$
and 
\begin{equation}\label{v2tangentv1}
v_2(y_0,t_0)= v_1(y_0+d_0,t_0).
\end{equation}
Since $v_1(x,t)$ is increasing in $x \leq x_0$, it is clear that  
$$v_2(\xi_{\pm}(t),t) =v_1(\xi_{\pm}(t),t)<v_1(\xi_{\pm}(t)+d_0,t)  \,\hbox{ for } \, t\in\R. $$
Then applying strong maximum principle to the equation satisfied by $v_2(x,t) -v_1(x+d_0,t)$ over the region
$\big\{(x,t):\, \xi_-(t) \leq x\leq \xi_+(t),\, t\geq t_0 \big\}$,
 we have
\begin{equation}\label{v2-leq-v1shift}
v_2(x,t)  <v_1(x+d_0,t)\,\,\hbox{ for }  \xi_-(t) <x <\xi_+(t),\,t>t_0.
\end{equation}
On the other hand, it follows directly from \eqref{v2tangentv1} and the $T$-periodicity of $v_1$ and $v_2$ that 
$$ v_2(y_0,t_0+T)= v_1(y_0+d_0,t_0+T),$$
which is a contradiction with \eqref{v2-leq-v1shift}, since $y_0\in (\xi_-(t_0+T),\xi_+(t_0+T) )$ due to the $T$-periodicity of $\xi_{\pm}(t)$. Therefore, the proof of Lemma \ref{periodic-number} is complete.
\end{proof}

The following lemma provides a uniform estimate (in $t$) for the solution $u(x,t)$ at all large $x$ if Case {\rm (II)} of Lemma \ref{lem:omega-max} happens.

\begin{lemma}\label{shift-supersolu} 
Suppose that Case {\rm (II)} in Lemma \ref{lem:omega-max} holds. Then there exists $L_0>0$ sufficiently large such that 
\begin{equation}\label{u-largex-estimate}
u(x+x_0,t)< v_{\varphi}(|x|-L_0+x_0,t)\,\,\hbox{ for } |x|\geq L_0,\,t\geq 0.
\end{equation}
Furthermore, for any  $v\in \tilde{\omega}(u)$, we have 
\begin{equation}\label{xinfty-omega-estimate}
\lim_{|x|\to\infty } v(x,t) \leq  \lim_{|x|\to\infty } v_{\varphi}(x,t) \,\hbox{ for } t\in\R. 
\end{equation}
\end{lemma}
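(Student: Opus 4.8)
The plan is to bound $u$ on the two tails by spatial translates of $v_{\varphi}$ and then invoke the comparison principle. Work in the unshifted variable $y=x+x_0$; set $\rho_0:=\max\{L_2-x_0,\,x_0-L_1\}+1$, so that $x_0+\rho_0>L_2$ and $x_0-\rho_0<L_1$, and note that since $v_{\varphi}$ is symmetrically decreasing it is nonnegative and positive at $x_0$, so $M_{\varphi}:=\min_{t\in\R}v_{\varphi}(x_0,t)>0$ by continuity and $T$-periodicity. The two translates $V_{\pm}(y,t):=v_{\varphi}(y\mp L_0,t)$ are bounded entire solutions of \eqref{ground}; on the right tail $\{y\ge x_0+L_0\}$ I will compare $u$ with $V_+$, on the left tail $\{y\le x_0-L_0\}$ with $V_-$. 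For $L_0>\max\{L_2-x_0,\,x_0-L_1\}$ the initial datum $u_0$ vanishes a.e.\ on both tails, so the only nontrivial hypothesis of the comparison is the lateral one: $u(x_0\pm L_0,t)<v_{\varphi}(x_0,t)$ for all $t\ge0$ (note that $v_{\varphi}(x_0,t)$ is precisely the value of $V_\pm$ at the respective inner endpoint). Proving this lateral estimate is the crux.

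I would establish it in two steps. \textbf{Large $t$.} There is a $t_1>0$, \emph{independent of $L_0$}, with $u(x_0\pm\rho_0,t)<v_{\varphi}(x_0,t)$ for $t\ge t_1$. Indeed, if not, there are $t_j\to\infty$ with, say, $u(x_0+\rho_0,t_j)\ge v_{\varphi}(x_0,t_j)$; writing $t_j=s_j+m_jT$ with $s_j\in[0,T)$, $m_j\to\infty$, and passing to a subsequence so that $s_j\to s_*$ and $u(\cdot,\cdot+m_jT)\to v_*$ in $C^{2,1}_{loc}$ for some $v_*\in\tilde{\omega}(u)$, one obtains $v_*(x_0+\rho_0,s_*)\ge v_{\varphi}(x_0,s_*)$. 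By Lemma \ref{conorsym}, $v_*$ is symmetrically decreasing or constant about $x_0$, so $v_*(x_0+\rho_0,s_*)\le v_*(x_0,s_*)$; together with Lemma \ref{lem:omega-max} this forces $v_*(x_0,s_*)=v_{\varphi}(x_0,s_*)$ and hence $v_*\equiv v_{\varphi}$, whence $v_{\varphi}(x_0+\rho_0,s_*)\ge v_{\varphi}(x_0,s_*)$, contradicting that $v_{\varphi}$ is symmetrically decreasing. The same argument works at $x_0-\rho_0$, so a common $t_1$ exists; since $u_x<0$ on $\{x>L_2\}$ and $u_x>0$ on $\{x<L_1\}$ (Lemma \ref{monotone}), it follows that $u(y,t)<v_{\varphi}(x_0,t)$ for all $|y-x_0|\ge\rho_0$ and $t\ge t_1$. \textbf{Bounded $t$.} By Lemma \ref{largex} there is $R_1\ge\rho_0$ with $u(y,t)<M_{\varphi}\le v_{\varphi}(x_0,t)$ for $|y-x_0|\ge R_1$ and $t\in[0,t_1]$. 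Fixing $L_0:=R_1+1$ then gives $u(y,t)<v_{\varphi}(x_0,t)$ for all $|y-x_0|\ge L_0$ and all $t\ge0$, which contains the lateral estimate, and the tails of $u_0$ still vanish.

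Now run the comparison on $(x_0+L_0,\infty)\times(0,\infty)$: on the parabolic boundary $u(\cdot,0)=0<V_+(\cdot,0)$ a.e.\ and $u(x_0+L_0,t)<v_{\varphi}(x_0,t)=V_+(x_0+L_0,t)$; since $V_+-u$ solves a linear parabolic equation with bounded zeroth-order coefficient (as $u$ and $v_{\varphi}$ are bounded and $f\in C^1$ in $u$) and is bounded, the maximum principle for bounded solutions on a half-line gives $u\le V_+$, and the strong maximum principle upgrades this to strict inequality in the interior. The left tail is symmetric. Rewriting $V_\pm$ on the respective tails, via $v_{\varphi}(2x_0-\cdot,t)=v_{\varphi}(\cdot,t)$, as $v_{\varphi}(|y-x_0|-L_0+x_0,t)$ and returning to the variable $x$ yields \eqref{u-largex-estimate}. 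Finally, given $v\in\tilde{\omega}(u)$, choose $m_j\to\infty$ with $u(\cdot,\cdot+m_jT)\to v$; evaluating \eqref{u-largex-estimate} at $t+m_jT$, using the $T$-periodicity of $v_{\varphi}$, and letting $j\to\infty$ gives $v(x+x_0,t)\le v_{\varphi}(|x|-L_0+x_0,t)$ for $|x|\ge L_0$; letting $|x|\to\infty$ (the limits exist since $v$ and $v_{\varphi}$ are each symmetrically decreasing or constant about $x_0$) gives \eqref{xinfty-omega-estimate}.

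The main obstacle is the lateral estimate, and specifically the apparent circularity one runs into if one splits $t$ into $[0,t_1]$ and $[t_1,\infty)$ naively, since a priori the length of the interval on which Lemma \ref{largex} must be invoked grows with $L_0$. The fix is to transfer the large-time control from the moving endpoint $x_0\pm L_0$ to the fixed point $x_0\pm\rho_0$ using the spatial monotonicity of $u$ outside $[L_1,L_2]$, so that $t_1$ becomes independent of $L_0$; the $\omega$-limit compactness argument then exploits essentially that $v_{\varphi}$ is the strictly maximal $\omega$-limit solution (Lemma \ref{lem:omega-max}) and is strictly decreasing away from $x_0$, so a limiting solution $v_*$ cannot attain the level $v_{\varphi}(x_0,\cdot)$ anywhere except at $x_0$.
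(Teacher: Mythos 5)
Your proof is correct and follows essentially the same route as the paper: both establish the key lateral bound $u(x,t)<v_{\varphi}(x_0,t)$ on the tails by combining Lemma \ref{largex} for bounded times with a compactness/contradiction argument for large times (using the spatial monotonicity of $u$ outside $[{\rm spt}(u_0)]$, the strict maximality of $v_{\varphi}$ from Lemma \ref{lem:omega-max}, and the strict decrease of $v_{\varphi}$ away from $x_0$), and then compare $u$ with the shifted profile $v_{\varphi}(|x|-L_0+x_0,t)$ on the two tails before passing to the limit along $m_jT$. The only difference is organizational: the paper runs a single contradiction with a sequence $(y_k,t_k)$, $|y_k|\to\infty$, whereas you split into large and bounded times at a fixed reference point; the substance is identical.
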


\begin{proof}
We first show that there exists $L_0>0$ such that 
\begin{equation}\label{ulargex-leq-v0}
u(x,t)< v_{\varphi}(x_0,t) \,\,\hbox{ for  }  |x-x_0|\geq L_0,\,t\geq 0.
\end{equation}
Suppose the contrary that it does not hold. Then there exists a sequence $\{(y_k,t_k)\}_{k\in\N}$ in $\R\times \R^+$ with $|y_k|\to\infty$ as $k\to\infty$ such that
\begin{equation}\label{assume-uk-leq-vk}
u(y_k,t_k)\geq v_{\varphi}(x_0,t_k) \,\,\hbox{ for } k\in\N.
\end{equation} 
In what follows, we shall select various subsequences from $\{(y_k,t_k)\}$ and, to avoid inundation
by subscripts, always denote the subsequence again by $\{(y_k,t_k)\}$.  Without loss of generality, we assume that there is a subsequence of $\{y_k\}$ such that $y_k\to\infty$ as $k\to\infty$.

We claim that the sequence $\{t_k\}$ is unbounded. Otherwise, by Lemma \ref{largex}, we have $u(y_k,t_k)\to 0$
as $k\to\infty$. This is impossible, since \eqref{assume-uk-leq-vk} implies
$$ \liminf_{k\to\infty}u(y_k,t_k) \geq  \liminf_{k\to\infty} v_{\varphi}(x_0,t_k) >0.$$ 
Thus, $\{t_k\}$ is unbounded.

We write $t_k=t_k'+t_k''$ with $t_k'\in T\N$ and $t_k''\in [0,T)$. Then there exist $s_0\in\R$ and an $\omega$-limit point $\tilde{\varphi}$ such that, up to extraction some subsequence, $t_k''\to s_0$ as $k\to\infty$, and 
$$u(x,t_k')\to \tilde{\varphi}(x) \,\hbox{ as } k\to\infty \hbox{ locally uniformly in }  x\in\R. $$
Let $v_{\tilde{\varphi}}$ be the $\omega$-limit solution through $\tilde{\varphi}$. It then follows from Lemma \ref{lem:omega-max} that 
\begin{equation}\label{tvarphiy-leq-varphi0}
\hbox{either } \,v_{\tilde{\varphi}}\equiv v_{\varphi}\, \hbox{ or } \, v_{\tilde{\varphi}}(x_0,t) < v_{\varphi}(x_0,t) \,\hbox{ for all } t\in\R.
\end{equation}

On the other hand, let $z_0>0$ be a number sufficiently large such that $z_0\not\in [{\rm spt}(u_0)]$. 
Then there exists $k_0\in\N$ large enough such that $y_k\geq z_0$ for all $k\geq k_0$. It follows from Lemma \ref{monotone} that 
$$u(z_0,t_k) \geq u(y_k,t_k) \, \hbox{ for all } k\geq k_0.$$
Combining this with \eqref{assume-uk-leq-vk}, we obtain 
$$u(z_0,t_k)\geq v_{\varphi}(x_0,t_k)\, \hbox{ for all } k\geq k_0.$$ 
Since $ v_{\varphi}$ is $T$-periodic, passing to the limit as $k\to\infty$, we have 
$ v_{\tilde{\varphi}}(z_0,s_0)\geq v_{\varphi}(x_0,s_0)$. 
This together with \eqref{tvarphiy-leq-varphi0} and the fact that $v_{\tilde{\varphi}}$ is either symmetrically decreasing with respect to $x_0$ or spatially homogeneous implies that $v_{\tilde{\varphi}}\equiv v_{\varphi}$ and $v_{\tilde{\varphi}}$ is spatially homogeneous. It  contradicts with our assumption that $v_{\varphi}$ is symmetrically decreasing. Therefore, \eqref{ulargex-leq-v0} is proved.

By enlarging $L_0$ in \eqref{ulargex-leq-v0} if necessary, we may assume that 
$[{\rm spt}(u_0)]\subset [x_0-L_0,x_0+L_0]$. We now consider the function 
$$w(x,t):= v_{\varphi}(|x|-L_0+x_0,t) $$
over the region $\Omega:=\{(x,t):\,|x|\geq L_0, t\geq 0 \} $. It is easily checked that $w$ satisfies
\begin{equation*}
\left\{\baa{ll}
\smallskip w_t = w_{xx}+f(t,w), & \hbox{ for } \,|x|\geq L_0,\,t>0, \vspace{3pt}\\
\smallskip w(x,0)>0, & \hbox{ for }\, \,|x|\geq L_0, \vspace{3pt}\\
\smallskip w(\pm L_0,t) > u(x_0\pm L_0,t), & \hbox{ for }\, t\geq 0.\eaa\right.
\end{equation*}
Then, by the strong maximum principle, we have 
$$u(x+x_0,t) < w(x,t) \,\,\hbox{ for }  |x|\geq L_0,\, t\geq 0.$$
Thus, we obtain \eqref{u-largex-estimate}. 

It remains to prove \eqref{xinfty-omega-estimate}. Let $v$ be any $\omega$-limit solution of \eqref{E}. Then there exists a sequence of positive integers $\{m_j\}_{j\in\N}$ such that for any $t\in\R$, 
$$u(x,m_jT+t)\to v(x,t) \,\hbox{ as }  m_j\to\infty \, \hbox{ in }   L_{loc}^{\infty}(\R).  $$
It follows from \eqref{u-largex-estimate} that for any $t\in\R$ and any $j\in\N$ with $t+m_jT>0$, there holds
\begin{equation*}
u(x+x_0,m_jT+t)< v_{\varphi}(|x|-L_0+x_0,m_jT+t)\,\,\hbox{ for } |x|\geq L_0.
\end{equation*}
Taking the limit as $j\to\infty$, we obtain 
\begin{equation*}
v(x+x_0,t)\leq  v_{\varphi}(|x|-L_0+x_0,t)\,\,\hbox{ for } |x|\geq L_0.
\end{equation*}
This in particular implies \eqref{xinfty-omega-estimate}. The proof of Lemma \ref{shift-supersolu}  is thus complete. 
\end{proof}

\vskip 5pt

We are now ready to show the following result.

\begin{prop}\label{prop:nonflat}
Suppose that Case {\rm (II)} in Lemma \ref{lem:omega-max} holds. 
Then either $\omega(u)=\{\varphi\}$ or Case (ii) of Theorem \ref{mainconv} holds.
\end{prop}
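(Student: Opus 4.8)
The plan is to show that if $\omega(u)\ne\{\varphi\}$, then $\tilde\omega(u)$ has the structure described in Case (ii) of Theorem \ref{mainconv}: all its elements are symmetrically decreasing with respect to $x_0$, they share a common base, and every non-periodic element is a heteroclinic connection between two $T$-periodic solutions of \eqref{ground}. First I would rule out the possibility that $\omega(u)$ contains a spatially constant element: if some $\tilde\varphi\in\omega(u)$ were constant, then by Lemma \ref{lem:omega-max} we would have $\tilde\varphi(x_0)<\varphi(x_0)$, and a monotonicity-in-$m$ argument exactly parallel to the proof of Proposition \ref{prop:flat} (with the roles of "flat limit" and "peaked limit" interchanged, using the super-solution estimate \eqref{u-largex-estimate} of Lemma \ref{shift-supersolu} to control $u$ at large $|x|$) would force $u(x,mT)\to\tilde\varphi$, contradicting $\varphi\in\omega(u)$. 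Hence every element of $\tilde\omega(u)$ is symmetrically decreasing with respect to $x_0$ by Lemma \ref{conorsym}.

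Next I would establish that all elements of $\tilde\omega(u)$ have the same base. Let $v\in\tilde\omega(u)$ be arbitrary, with base $p_v(t)=\lim_{|x|\to\infty}v(x,t)$, and let $p_\varphi$ be the base of $v_\varphi$ from Case (II). By \eqref{xinfty-omega-estimate} we already have $p_v(t)\le p_\varphi(t)$ for all $t$. For the reverse inequality I would use Lemma \ref{omega-sol}: if $v$ is not $T$-periodic, its forward and backward limits $V_\pm$ are $T$-periodic $\omega$-limit solutions, and by Proposition \ref{hereconnect} there is a companion $\omega$-limit solution running the other way; iterating and using the semi-continuity of the zero number (Lemma \ref{zeorperiod} and \eqref{semi-continuous}) together with Lemma \ref{periodic-number} (which pins the intersection number of any two periodic $\omega$-limit solutions to $0$ or $2$), one sees that two distinct periodic $\omega$-limit solutions with the strict ordering $v_1(x_0,t)>v_2(x_0,t)$ must be "nested" — i.e. $v_1>v_2$ everywhere — and in particular their bases are ordered. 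The key point is then that a periodic $\omega$-limit solution strictly below $v_\varphi$ at $x_0$ but with a strictly smaller base would, by the zero-number count, be forced to lie strictly below $v_\varphi$ everywhere including at $|x|=\infty$; combining this with the fact (from the monotonicity of $u(x_0,t+mT)$ along suitable sequences and \eqref{u-largex-estimate}) that $u$ is squeezed between such nested periodic solutions at large $|x|$, one derives that the base cannot drop, so $p_v\equiv p_\varphi$.

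Finally, having fixed the common base $p:=p_\varphi$, I would classify each $v\in\tilde\omega(u)$ via Lemma \ref{omega-sol}: either $v$ is a $T$-periodic solution of \eqref{ground}, or $v(x_0,t+mT)$ is strictly monotone in $m$ and $v(x,t+mT)\to V_\pm(x,t)$ as $m\to\pm\infty$, where $V_\pm$ are $T$-periodic $\omega$-limit solutions — and since all of $\tilde\omega(u)$ consists of symmetrically decreasing solutions with base $p$, so do $V_\pm$, and $V_-\ne V_+$ by strict monotonicity at $x_0$; the convergence \eqref{heteroclinic} in $L^\infty(\R)$ (rather than merely $L^\infty_{loc}$) follows from the uniform estimate \eqref{u-largex-estimate} applied to the tails, which forces uniform control of $v(x,t+mT)-V_\pm(x,t)$ outside a large compact set. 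I expect the main obstacle to be the middle step — upgrading the local ordering "$v_\varphi(x_0,t)>v(x_0,t)$" to a genuine nesting "$v_\varphi(x,t)>v(x,t)$ for all $x$" and thereby controlling the base — since this is exactly where the zero-number bookkeeping of Lemma \ref{periodic-number}, the tangency/strong-maximum-principle sliding argument, and the tail estimate \eqref{u-largex-estimate} all have to be combined carefully; the flat-case exclusion and the final classification are comparatively routine given the lemmas already in hand.
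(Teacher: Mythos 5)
Your overall decomposition (exclude spatially homogeneous limit elements, force a common base, then classify via Lemma \ref{omega-sol} and upgrade the convergence to $L^\infty(\R)$) matches the paper's three steps, but the two hard sub-arguments have genuine gaps. First, the proposed exclusion of a constant element $\tilde\varphi$ by running the Proposition \ref{prop:flat} argument ``with roles interchanged'' does not work as stated. That argument produces \emph{upward} monotonicity of $u(\cdot,t+mT)$ and hence convergence to the element that is maximal at $x_0$ --- here the peaked $v_\varphi$, not $\tilde\varphi$ --- so the asserted conclusion $u(x,mT)\to\tilde\varphi$ points the wrong way (the downward version fails at the very first exterior comparison, since $u(x,m_2T)>0=u_0(x)$ outside the support). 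Even the upward version breaks down in the critical configuration: by \eqref{xinfty-omega-estimate} the constant satisfies $\tilde\varphi\le p_\varphi(0)$, and when $\tilde\varphi=p_\varphi(0)$ the compact-set gap $\min_{|x-x_0|\le L}\big(\varphi(x)-\tilde\varphi\big)=\varphi(x_0+L)-p_\varphi(0)$ tends to $0$ as $L\to\infty$, while the boundary estimate $u(x_0\pm L,t)\le v_\varphi(x_0\pm L,t)-\delta$ requires $L$ large; the two requirements are incompatible (similarly if $p_\varphi\equiv 0$). This is exactly the case the paper must treat by a completely different route: Corollary \ref{coro-intermediate} produces an intermediate $q\in\mathcal{X}_{per}$ with $p_\varphi<q<\max_x v_\varphi$, connectedness of the $\omega$-limit set produces an $\omega$-limit solution whose value at $x_0$ hits $q$, and Step 1 of the proof of Proposition \ref{hereconnect} (no symmetric $T$-periodic solution strictly between $V_\pm$ at $x_0$) yields the contradiction. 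None of this machinery appears in your plan.

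Second, the ``nesting'' claim is not a consequence of Lemma \ref{periodic-number} alone: the alternative $\mathcal{Z}(v_1-v_2)\equiv 2$ is precisely the non-nested configuration, and it is excluded only after the bases have been ordered via \eqref{xinfty-omega-estimate}. More importantly, once one knows $V<v_\varphi$ everywhere with base $p<p_\varphi$, ruling this out splits into two cases according to whether $V(x_0,\cdot)$ lies above or below $p_\varphi$. The first is handled by the sliding/tangency argument you allude to (shift $V$ until it touches $v_\varphi$, then apply the strong maximum principle and $T$-periodicity); the second again requires Corollary \ref{coro-intermediate} together with the connectedness argument and Proposition \ref{hereconnect}, which your ``$u$ is squeezed between nested periodic solutions at large $|x|$'' sketch does not supply. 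In short, the skeleton is right, but the essential ingredient missing throughout is Corollary \ref{coro-intermediate} combined with connectedness of $\tilde{\omega}(u)$ and Step 1 of Proposition \ref{hereconnect}; without it, the configurations in which a limit element sits at or below the base $p_\varphi$ cannot be excluded.
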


\begin{proof}
It is clear that if $\omega(u)$ is a singleton, then $\omega(u) =\{\varphi\}$. In what follows, we assume that $\omega(u)$ contains more than one element, and show the conclusions in Case (ii) of Theorem \ref{mainconv} hold.  

Let $V(x,t)$ be any $T$-periodic $\omega$-limit solution of \eqref{E}. By Lemma \ref{conorsym}, $V(x,t)$ is either symmetrically decreasing with respect to $x_0$ or spatially homogeneous. 
It follows from Lemma \ref{periodic-number} that 
$$\mathcal{Z}\big(v_{\varphi}(\cdot,t)- V(\cdot,t)\big)\equiv 0 \,\hbox{ or }\, 2. $$
We will use the following two notations
$$p_{\varphi}(t):=\lim_{|x|\to\infty} v_{\varphi}(x,t)\quad\hbox{and}\quad  p(t):=\lim_{|x|\to\infty} V(x,t) \,\, \hbox{ for } t\in\R.  $$
Clearly, $p_{\varphi}$ and $p$ are $T$-periodic solutions of \eqref{ODE}.

For clarity, we divide the proof into 3 steps.

{\bf Step 1: } We show that $V(x,t)$ is symmetrically decreasing with respect to $x_0$.

Suppose the contrary that $V=V(t)$ is spatially homogeneous. Then,  if $\mathcal{Z}\big(v_{\varphi}(\cdot,t)- V(t)\big)\equiv 2$, 
it is easily seen that
\begin{equation*}
p_{\varphi}(t) < p(t)=V(t) \,\,\hbox{ for }  t\in\R,
\end{equation*}
which is a contradiction with Lemma \ref{shift-supersolu}. 

On the other hand, if $\mathcal{Z}\big(v_{\varphi}(\cdot,t)- V(t)\big)\equiv 0$, then we have
$$V(t) \leq p_{\varphi}(t)<v_{\varphi}(x_0,t)\,\, \hbox{ for } t\in\R.$$
It follows from Corollary \ref{coro-intermediate} that there exists another $q\in \mathcal{X}_{per}$ such that
$$V(t) \leq p_{\varphi}(t) < q(t)<v_{\varphi}(x_0,t)\,\, \hbox{ for } t\in\R.$$
Since the set $\big\{v(\cdot,t):\,t\in\R,\,v\in\tilde{\omega}(u)\big\}$ is connected and compact in the topology of $L^{\infty}_{loc}(\R)$, there exists $v\in \tilde{\omega}(u)$ such that 
$$v(x_0,t_0)=q(t_0) \,\hbox{ for some } t_0\in\R.  $$ 
Note that $v(x,t)$ cannot be $T$-periodic. Otherwise, Lemma \ref{lem:periodic-tangential} implies $v\equiv q$, and hence, $v(t)> p_{\varphi}(t)$ for $t\in\R$, which is a contradiction with Lemma \ref{shift-supersolu} again. It then follows from Lemma \ref{omega-sol} that $v(x_0,t+mT)$ is strictly monotone in $m\in\Z$, and there exist two $T$-periodic $\omega$-limit solutions $V_{\pm}(x,t)$ that are connected by $v(x,t)$ in the sense of \eqref{omega-lim-pm}. Without loss of generality, we assume that $v(x_0,t+mT)$ is increasing in $m\in\Z$. Then we have
$$V_-(x_0,t_0)< q(t_0)< V_+(x_0,t_0), $$
which is a contradiction with Step 1 of the proof of Proposition \ref{hereconnect}.  Thus, $V(x,t)$ is symmetrically decreasing with respect to $x_0$.

{\bf Step 2:}  we prove $p \equiv p_{\varphi}$.

Suppose the contrary that it is invalid. Then by Lemma \ref{shift-supersolu}, there holds
$p(t) <  p_{\varphi}(t)$ for $t\in\R$.  It is then easily seen that $$V(x,t)< v_{\varphi}(x,t) \,\,\hbox{ for } x\in\R,\,t\in\R,$$
and that either of the following holds:
\begin{itemize}
\item [(a)]  $p(t)<p_{\varphi}(t) < V(x_0,t) <v_{\varphi}(x_0,t)$ for $t\in\R$;
\item [(b)]  $p(t)<V(x_0,t)<  p_{\varphi}(t) <v_{\varphi}(x_0,t)$ for $t\in\R$. 
\end{itemize}

We first rule out Case (a). Assume by contradiction that it holds. Note that both $V(x,t)$ and $v_{\varphi}(x,t)$ are symmetrically decreasing with respect to $x_0$.
For any fixed $t_0\in\R$, one can find a constant $d_0>0$ such that $v_{\varphi}(x,t_0)$ and $V(x+d_0,t_0)$ are tangent at some point $y_0<x_0$. More precisely, we have
$$V(y_0+d_0,t_0)=  v_{\varphi}(y_0,t_0)\quad\hbox{and}\quad V(x+d_0,t_0) \leq   v_{\varphi}(x,t_0) \,\,\hbox{ for } x\in\R.$$
Then by the strong maximum principle, we have 
$$V(x+d_0,t) <  v_{\varphi}(x,t) \,\,\hbox{ for } x\in\R,\,t>t_0,$$
which contradicts $V(y_0+d_0,t_0+T) =  v_{\varphi}(y_0,t_0+T)$. Therefore, Case (a) is impossible. 

Suppose that Case (b) occurs. It follows from Corollary \ref{coro-intermediate} that there exists another $q\in \mathcal{X}_{per}$ such that
$$V(x_0,t)<  p_{\varphi}(t) < q(t)<v_{\varphi}(x_0,t)\,\, \hbox{ for } t\in\R.$$
This is impossible, due to the same reasoning as used in Step 1.  Case (b) is ruled out too. Thus, we obtain $p \equiv p_{\varphi}$.

{\bf Step 3: } Completion of the proof. 

Note that if $\tilde{\omega}(u)$ consists of $T$-periodic solutions, then they are all symmetrically decreasing with respect to $x_0$ and have the same base $p_{\varphi}(t)$, and hence, the proof is complete.  

Suppose, on the other hand, that $\tilde{\omega}(u)$ contains non-periodic elements. Let $w$ be any such element. It follows from Lemma \ref{omega-sol} that $w(x_0,t+mT)$ is strictly monotone in $m\in\Z$, and there exist two $T$-periodic functions $W_{\pm}\in\tilde{\omega}(u)$ such that for any $t\in\R$, 
\begin{equation}\label{v-con-Vpm}
w(x,t+mT)\to W_{\pm}(x,t) \hbox{ as } m\to\pm \infty \hbox{ in } L_{loc}^{\infty}(\R).
\end{equation}
By Step 1, $W_{\pm}$ are symmetrically decreasing with respect to $x_0$. It is clear that $w$ is also symmetrically decreasing with respect to $x_0$. 
Denote 
$$p_{w}(t):=  \lim_{|x|\to\infty}w(x,t) \,\,\hbox{ for } t\in\R.$$
It remains to show that $p_{w}\equiv p_{\varphi}$ and the convergence \eqref{v-con-Vpm} takes place in the topology of $L^{\infty}(\R)$. 

Suppose the contrary that $p_{w}\not\equiv p_{\varphi}$. By Lemma \ref{shift-supersolu}, we have 
$$p_{w}(t)  < p_{\varphi}(t) \hbox { for } t\in\R.$$ 
It is clear that $p_{w}(t)$ is a solution of \eqref{ODE} and that it is either $T$-monotone nonincreasing or $T$-monotone nondecreasing. 
We may assume without loss of generality that $p_{w}(t)$ is $T$-monotone nondecreasing, since the analysis for the other cases is parallel.
Note that by standard parabolic estimates, $w(x,t)$ converges to $p_{w}(t)$ as $|x|\to\infty$ uniformly in $t\in\R$. There exists $M>0$ such that  
\begin{equation*}
w(x,mT)\leq  p_{w}(mT)+\delta  \,\,\hbox{ for all } |x|\geq M,\, m\in\Z, 
\end{equation*} 
where $\delta:= 1/2 (p_{\varphi}(0)-p_{w}(0)) >0$. Since $p_{w}(mT)$ is nondecreasing in $m\in\N$, we have 
$p_{w}(mT) \leq p_{w}(0)$ for all negative integers $m$. Thus, there holds
\begin{equation}\label{tildev-leq-p+}
w(x,mT) \leq p_{\varphi}(0)-\delta  \,\,\hbox{ for all } |x|\geq M\, \hbox{ and all negative integers } m. 
\end{equation}
On the other hand, by \eqref{v-con-Vpm}, for the above $\delta>0$ and $M>0$, there exists $m_0\in\Z$ sufficiently large negative such that 
$$  
w(x,m_0T)> W_{-}(0,x)-\delta \,\,\hbox{ for all }  |x|\leq M.
$$
Since $W_{-}(t,x)> p_{\varphi}(t)$ for $x\in\R$, $t\in\R$ by Step 2, we have
$$  
w(x,m_0T) > p_{\varphi}(0)-\delta \,\,\hbox{ for all }  |x|\leq M,
$$
which is a contradiction with \eqref{tildev-leq-p+} by choosing $|x|=M$ and $m=m_0$. Therefore, $p_{w}\equiv p_{\varphi}$, that is, $w(x,t)$ converges to $p_{\varphi}(t)$ as $|x|\to\infty$ uniformly in $t\in\R$. This together with the fact that $\lim_{|x|\to\infty} W_{\pm}(x,t) =p_{\varphi}(t) $ uniformly in $t\in\R$, immediately implies that the convergence \eqref{v-con-Vpm} takes place in the topology of $L^{\infty}(\R)$. 
The proof of Proposition \ref{prop:nonflat} is thus complete. 
\end{proof}

\vskip 5pt

\begin{proof}[Proof of Theorem \ref{mainconv}]
The proof follows directly from Lemma \ref{lem:omega-max}, Proposition \ref{prop:flat} and  Proposition \ref{prop:nonflat}.
\end{proof}

\subsection{Non-degenerate nonlinearity} 
In this subsection, we give the proof of Theorem \ref{theorem-non-degenerate}. Namely, we exclude the case (ii) of Theorem \ref{mainconv} under the additional non-degenerate assumption (H).  
\vskip 5pt

\begin{proof}[Proof of Theorem \ref{theorem-non-degenerate}]
Let $V_{\pm}(x,t)$ be any two symmetrically decreasing $T$-periodic $\omega$-limit solutions of \eqref{E} based at some $p\in \mathcal{X}_{per}$. It follows from Proposition \ref{nonexistence-result} that $p\notin \mathcal{Y}_{per}$. Then by the assumption (H), $p(t)$ is linearly stable. To complete the proof, it suffices to show that
\begin{equation}\label{v-equiv-v+}
V_{-}\equiv V_{+}. 
\end{equation}
This proof is similar in spirit to \cite[Proposition 3.2]{fp}, but nontrivial changes are needed.  For the sake of completeness, we include the details as follows.  

 It is clear that
$$\mu:= -\int_{0}^{T} \partial_u f(t,p(t))dt > 0.$$
Let $a>0$ be a fixed constant and $\bar{h}\in C^1([0,\infty))$ be the solution of the linear ODE
\begin{equation*}
\bar{h}_t  =\Big(\partial_u f(t,p(t))+\frac{\mu}{2}\Big)\bar{h} \,\,\hbox{ for } t>0,\quad\quad \bar{h}(0)=a. 
\end{equation*}
It is straightforward to check that $\bar{h}(t)>0$ for $t\geq 0$ and 
\begin{equation}\label{asymt-stable}
\bar{h}(t) \to 0 \,\,\hbox{ as } t\to\infty. 
\end{equation}

We now assume by contradiction that \eqref{v-equiv-v+} does not hold. Then we have $V_{+}(x_0,t) \neq V_-(x_0,t)$ for all $t\in\R$. Otherwise, \eqref{v-equiv-v+} follows immediately from Lemma \ref{lem:periodic-tangential}. Without loss of generality, we assume that 
\begin{equation}\label{v1-x0-geq-v2}
V_+(x_0,t) > V_-(x_0,t) \,\,\hbox{ for } t\in\R. 
\end{equation}
 For any $\lambda\in\R$, set 
 $$W^{\lambda}(x,t):= V_-(x,t)-V_{+}(x-\lambda,t)\,\,\hbox{ for }  x\in\R,\,t\in\R. $$
Clearly, $W^{\lambda}(x,t)$ is $T$-periodic and nonzero. 
By the proof of Lemma \ref{periodic-number},  for each $\lambda\in\R$, $W^{\lambda}(\cdot,t)$ has a fixed number of zeros, and all of them are simple. It follows from \eqref{v1-x0-geq-v2} that for all large negative $\lambda$, $W^{\lambda}(x,t)$ has zeros to the right of $x_0+\lambda$. Denote the minimum of such zeros by $\xi^{\lambda}(t)$. Then $\xi^{\lambda}(t)$ is continuous and $T$-periodic. Moreover, there exists $\lambda_1\in (-\infty,\infty]$ such that $\xi^{\lambda}(0)$ is well-defined in $\lambda\in (-\infty,\lambda_1)$ and 
\begin{equation}\label{converge-xi0}
\xi^{\lambda}(0) \to \infty \,\,\hbox{ as } \lambda\to \lambda_1. 
\end{equation}
Next, we show that
\begin{equation}\label{converge-xi}
\xi^{\lambda}(t) \to \infty \,\,\hbox{ as } \lambda\to \lambda_1 \,\hbox{ uniformly in } t\in\R. 
\end{equation}
Indeed, since $\xi^{\lambda}(t) > x_0+\lambda$ for each $\lambda\in (-\infty,\lambda_1)$ and $t\in\R$, it immediately gives \eqref{converge-xi} if $\lambda_1=\infty$. On the other hand, if $\lambda_1<\infty$, then by \eqref{converge-xi0} and the structure of $\xi^{\lambda}(t)$, we obtain  
$$W^{\lambda_1} (x,t) >0 \,\,\hbox{ for all } x\geq x_0+\lambda_1,\,t\in\R,$$
which also implies \eqref{converge-xi}. Thus, \eqref{converge-xi} is proved. 

We choose a small constant $\delta\in (0,a)$ such that 
\begin{equation}\label{choose-small-delta}
\partial_u f(t,s) \leq  \partial_u f(t,p(t))+\frac{\mu}{2}\,\, \hbox{ for all } p(t)\leq s\leq p(t)+\delta,\,t\in\R. 
\end{equation}
By \eqref{converge-xi}, there exists $\lambda_{\delta}\in (-\infty,\lambda_1)$ sufficiently close to $\lambda_1$ such that 
$$p(t)<V_-(x,t) \leq p(t)+\delta\,\,\hbox{ for }  x\geq \xi^{\lambda_{\delta}}(t),\,t\in\R.   $$
Since $\xi^{\lambda_{\delta}}(t) > x_0+\lambda_{\delta}$ for $t\in\R$ and since $V_{+}(x-\lambda_{\delta},t) $ is decreasing in $x>x_0+\lambda_{\delta}$, we have 
$$p(t)<V_+(x-\lambda_{\delta},t) \leq V_+(\xi^{\lambda_{\delta}}(t)-\lambda_{\delta},t) = V_-(\xi^{\lambda_{\delta}}(t),t) \leq p(t)+\delta    $$
for  $x\geq \xi^{\lambda_{\delta}}(t)$, $t\in\R$.

We are now ready to complete the proof. If $W^{\lambda_{\delta}}(x,t)$ has zeros to the right of 
$\xi^{\lambda_{\delta}}(t)$, we denote the minimum of such zeros by $\bar{\xi}(t)$. Clearly, $\bar{\xi}(t)$ is a $T$-periodic and continuous function, and it is also easily checked that
$$  0< W^{\lambda_{\delta}}(x,t) <\delta \,\,\hbox{ for }   \xi^{\lambda_{\delta}}(t) <x< \bar{\xi}(t),\,t\in\R.  $$
Then by \eqref{choose-small-delta}, we calculate that 
\begin{equation*}
 W^{\lambda_{\delta}}_t- W^{\lambda_{\delta}}_{xx}-  \Big(\partial_u f(t,p(t))+\frac{\mu}{2}\Big) W^{\lambda_{\delta}}\leq 0\,\,\hbox{ for }   \xi^{\lambda_{\delta}}(t) <x< \bar{\xi}(t),\,t\in\R.
 \end{equation*}
Moreover, it is clear that 
$$ W^{\lambda_{\delta}}(\xi^{\lambda_{\delta}}(t),t)=0<\bar{h}(t)\quad\hbox{and}\quad W^{\lambda_{\delta}}(\bar{\xi}(t),t)=0<\bar{h}(t) \,\,\hbox{ for } t\geq 0. $$ 
 By the comparison principle, we obtain 
$$\bar{h}(t) \geq   W^{\lambda_{\delta}}(x,t) \,\,\,\hbox{ for }   \xi^{\lambda_{\delta}}(t) \leq x\leq \bar{\xi}(t),\,t\geq 0.$$
It then further follows from \eqref{asymt-stable} that 
$$\lim_{t\to\infty}\sup_{ \xi^{\lambda_{\delta}}(t) \leq x\leq \bar{\xi}(t)} W^{\lambda_{\delta}}(x,t) =0, $$  
which is a contradiction with the fact that $W^{\lambda_{\delta}}(x,t)$ is positive and $T$-periodic on   $\xi^{\lambda_{\delta}}(t) <x< \bar{\xi}(t),\,t\in\R$. On the other hand, if there are no zeros of $W^{\lambda_{\delta}}(x,t)$ to the right of $\xi^{\lambda_{\delta}}(t)$, we can still derive a contradiction by replacing $\bar{\xi}(t)$ by $\infty$ in the above arguments. Therefore, there much hold $V_{+}\equiv V_-$. The proof of Theorem \ref{theorem-non-degenerate} is thus complete. 
\end{proof}


\SE{Proof of Theorem \ref{precise-omega-limit} and Propositions \ref{bistable-theorem} and \ref{combustion-theorem}}
In this section, we first give the proof of Theorem \ref{precise-omega-limit}, that is, any $p\in\mathcal{X}_{per}$ satisfying the condition (i) or (ii) can not belong to $\tilde{\omega}(u)$. The proof relies on Proposition \ref{existence-bound-interval} and the method of change of variables introduced in Section 3. As an easy application of Theorem \ref{precise-omega-limit}, we then prove Propositions \ref{bistable-theorem} and \ref{combustion-theorem}. 

\vskip 5pt

\begin{proof}[Proof of Theorem \ref{precise-omega-limit}]
We assume that $p\in \mathcal{X}_{per}$ is an element of $\tilde{\omega}(u)$, and prove none of the conditions (i) and (ii) can hold. By Corollary \ref{coro-homo}, we immediately obtain 
$\tilde{\omega}(u)=\{p\}$. Then for each $t\geq 0$, 
\begin{equation}\label{ukj-qlambda0}
u(x,t+mT)\to p(t) \,\,\hbox{ as } m\to\infty \hbox{ in } L_{loc}^{\infty}(\R).
\end{equation}

{\bf Step 1:} We show that the condition (i) does not hold.

Suppose the contrary that (i) holds for some $\epsilon>0$, that is, $p(t)$ is unstable from below with respect to \eqref{odeinitial}. Then there exists another $q\in\mathcal{X}_{per}$ satisfying $0\leq q(t)<p(t)$ for $t\in\R$ and, for each $a\in (q(0),p(0))$, 
\begin{equation}\label{assume-unstable-below}
h(t+mT;a) \searrow q(t) \,\,\hbox{ as } m\to\infty \hbox{ uniformly in } t\in [0,T].
\end{equation}

We first show that 
\begin{equation}\label{maxu-leq-p}
\sup_{x\in\R} u(x,t)>p(t) \,\,\hbox{ for all } t\geq 0.
\end{equation}
Assume by contradiction that there exits some $t_0\geq 0$ such that $u(x,t_0)\leq p(t_0)$ for $x\in\R$. Then by strong maximum principle, we have $u(x,t)<p(t)$ for $x\in\R,\,t>t_0$.  Let $a\in (q(0),p(0))$ be sufficiently close to $p(0)$. It follows from \eqref{assume-unstable-below} that
$$\lim_{m\to\infty} u(x,t+mT) \leq \lim_{m\to\infty} h(t+mT;a)  = q(t), $$ 
which is a contradiction with \eqref{ukj-qlambda0}. Hence, we obtain \eqref{maxu-leq-p}. 

By Lemma \ref{zero1}, we have $\mathcal{Z}(u(\cdot,t)-p(t))<\infty$ for $t>0$ and it is a constant for large $t$. Due to Lemma \ref{largex} and \eqref{maxu-leq-p}, this constant is positive. Therefore, for all large $t$, say $t\geq T_1$,  $u(x,t)-p(t)$ has a fixed number of zeros and all of them are simple. Denote the maximum of these numbers by $\xi(t)$ for $t\geq T_1$. Clearly, $\xi(t)$ is a continuous function of $t$, and there holds
\begin{equation}\label{maximum-zero}
u(\xi(t),t)-p(t)=0 ,\quad \partial_x u(\xi(t),t)<0 \,\,\hbox{ for } t\geq T_1.
\end{equation}

Next, we claim that 
\begin{equation}\label{uxxi-to-infty}
\partial_x u(\xi(t),t)\to 0 \,\,\hbox{ as } t\to\infty.
\end{equation}
Otherwise, there would exist a constant $\delta>0$ and a sequence $\{t_k\}_{k\in\N} \subset [T_1, \infty)$ satisfying $t_k\to\infty$ as $k\to\infty$ and 
\begin{equation}\label{ux-les-sigma}
\partial_x u\big(\xi(t_k),t_k\big)\leq -\delta \,\,\hbox{ for each } k\in\N.  
\end{equation}
By standard parabolic estimates, $\partial_{xx} u(x,t)$ is uniformly bounded for $x\in\R$, $t\geq T_1$. This together with the first equality of \eqref{maximum-zero} and \eqref{ux-les-sigma} infers that there exist $r>0$ and $\epsilon>0$ (both independent of $k$) such that 
$$u(\xi(t_k)-r, t_k)-p(t_k)\geq \epsilon \,\,\hbox{ for each } k\in\N.   $$
In view of this and Lemma \ref{monotone},  we can find some $y_0\in {\rm spt} (u_0) $ such that for each $k\in\N$, 
$u(y_0, t_k)-p(t_k)\geq \epsilon$. And hence, we have 
$$\liminf_{k\to\infty} \big(u(y_0, t_k)-p(t_k)\big)\geq \epsilon.$$
This appears to be a contradiction with \eqref{ukj-qlambda0}.  Therefore, \eqref{uxxi-to-infty} holds. 

We complete the proof of this case by constructing a suitable supersolution of \eqref{E} in a finite interval. Because of \eqref{assume-unstable-below}, it follows from Proposition \ref{existence-bound-interval} (see also Remark \ref{re-steady-state}) that, there exists a constant $R>0$ such that the following problem
\begin{equation*}
\left\{\baa{ll}
\smallskip  w_t=w_{xx}+f(t,w), & \hbox{ for } \,-R<x<R,\,t\in\R, \vspace{3pt}\\
\smallskip w(x, t+T)=w(x,T), & \hbox{ for }\,-R\leq x\leq R,\,t\in\R, \vspace{3pt}\\
w(\pm R, t)=p(t), & \hbox{ for }\, t\in \R,\eaa\right.
\end{equation*}
admits a solution $w(x,t)$ satisfying 
\begin{equation}\label{q-leqw-leq-p}
q(t) <w(x,t) < p(t) \,\,\hbox{ for } -R<x<R,\,t\in\R.  
\end{equation}
Clearly, $\partial_x w(-R,t)<0$ for $t\in\R$ and it is $T$-periodic in $t$.  Combining this with \eqref{uxxi-to-infty}, we can find some $T_2\geq T_1$ sufficiently large such that 
\begin{equation}\label{ux-geq-wx}
0> \partial_x u\big(\xi(t),t\big) \geq \frac{1}{2} \partial_x w(-R,t) \,\,\hbox{ for } t\geq T_2. 
\end{equation}
Moreover, note that $u(x,T_2)\to 0$ as $x\to\infty$ by Lemma \ref{largex}. This together with \eqref{q-leqw-leq-p} implies there exists $L>\xi(T_2)$ such that 
$$u(x,T_2)<  w(x-L,T_2) \,\, \hbox{ for } L-R\leq x\leq L+R. $$

For clarity, we divide the following analysis into two cases, based on whether $\xi(t)$ touches $L-R$ at a finite time. Firstly, if $\xi(t) < L-R $ for all $t\geq T_2$, then we have 
$$ u(L\pm R,t) < w(\pm R, t) \,\,\hbox{ for all }  t\geq T_2. $$
Applying the strong maximum principle to the equation satisfied by $u(x,t)-w(x-L,t)$ over $L-R\leq x \leq L+R$, $t\geq T_2$, 
we obtain 
$$u(x,t) < w(x-L,t) \,\,\hbox{ for } L-R\leq x \leq L+R, \, t\geq T_2.$$
This in particular implies for each $t\geq 0$, 
$$\lim_{m\to\infty} u(x,t+mT) \leq w(x-L,t) <p(t) \,\,  \hbox{ for } L-R<x< L+R,   $$
which is a contradiction with \eqref{ukj-qlambda0}.
 
Secondly, if there exists $T_3>T_2$ such that $\xi(T_3) = L-R$ and $\xi(t)<  L-R $ for $T_2\leq t <T_3$, 
then we have 
$$u(L-R,T_3)= w(-R,T_3)\quad\hbox{and}\quad  u(L\pm R,t)< w(\pm R,t)\,  \hbox{ for } T_2\leq t < T_3. $$
By the strong maximum principle again, we obtain   
$$u(x,t) < w(x-L,t) \,\,\hbox{ for } L-R<  x \leq L+R, \, T_2\leq t \leq T_3,$$
and 
$$\partial_x w_x(-R,T_3)> \partial_x u_x(L-R,T_3),$$
which is a contradiction with \eqref{ux-geq-wx}. Therefore, if $p\in \tilde{\omega}(u)$, then the condition (i) does not hold.

{\bf Step 2:} We show that the condition (ii) does not hold. 

Suppose also by contrary that (ii) holds for some $\epsilon>0$. By Step 1, $h(t;a)$ can not be $T$-monotone decreasing for all $a\in (p(0)-\epsilon,p(0))$. Then there exists a positive $\tilde{q}\in\mathcal{X}_{per}$ such that $$p(0)-\epsilon \leq \tilde{q}(0) < p(0).$$ 
Because of \eqref{ukj-qlambda0} and Lemma \ref{monotone}, there exists $m_0\in\N$ sufficiently large such that 
\begin{equation*}
0< u(x,mT) < p(0)+\epsilon\,\,\hbox{ for } x\in\R,\,m\geq m_0. 
\end{equation*}

We may assume without loss of generality that $f(\cdot,u)$ is a $C^2$ function in $u$, as by standard approximation arguments (similar to those given in Step 3 of the proof of Proposition \ref{nonexistence-result}), the following analysis can be extended to the general case where $f$ satisfies \eqref{assume-lipschitz}. Then, for each $k\in\N$, there exsits a unique function $v_k\in C^{2,1}(\R\times [0,T])$ such that 
\begin{equation}\label{change-variable-u-vk}
u(x,m_0T+kT+t)=h(t;v_k(x,t))\,\,\hbox{ for } x\in\R,\,t\in [0,T].
\end{equation}
Clearly, for each $k\in\N$, we have
\begin{equation}\label{prop2-vk}
0< v_k(x,0) < p(0)+\epsilon\,\,\hbox{ for } x\in\R, 
\end{equation}
and
\begin{equation}\label{prop3-vk}
v_{k+1}(x,0)=h(T,v_k(x,T))\,\,\hbox{ for } x\in\R.
\end{equation}
Moreover, since $u(x,t)$ is the solution of \eqref{E}, direct calculation shows that for each $k\in\N$, $v_k(x,t)$ satisfies
\begin{equation}\label{equation-vk}
(v_k)_t=(v_k)_{xx}+ \frac{h_{aa}(t;v_k)}{h_a(t;v_k)}(v_k)_x^2\,\,\hbox{ for } x\in\R,\,t\in [0,T],
\end{equation}
where $h_a(t;a)$ and $h_{aa}(t;a)$ stand for the derivatives of $h(t;a)$ with respect to initial value $a$.
By the estimates proved in Lemma \ref{derivative-estimates}, we have 
$$ \frac{h_{aa}(t;v_k)}{h_a(t;v_k)} \leq M\,\,\hbox{ for } x\in\R,\,t\in [0,T],\,k\in\N,$$
where $M$ is a positive constant independent of $k$. 

Let $\psi(x,t)$ be the solution of 
\begin{equation}\label{supequation-psi}
\left\{\baa{ll}
\smallskip  \psi_t=\psi_{xx}+M\psi_x^2, & \hbox{ for } \,x\in\R,\,t>0, \vspace{3pt}\\
\psi(x, 0)=\psi_0(x), & \hbox{ for }\, x\in\R,\eaa\right.
\end{equation}
which initial function $\psi_0\in C(\R)$ satisfying  
\begin{equation}\label{initial-condi1-psi0}
\max\big\{\tilde{q}(0), v_0(x,0)\big\} \leq  \psi_0 (x) \leq p(0)+\epsilon \,\,\hbox{ for } x\in\R,
\end{equation}
and 
\begin{equation}\label{initial-condi2-psi0}
\psi_0 (x)=\tilde{q}(0) \hbox{ for sufficiently large negative and positive } x\in\R. 
\end{equation}
The existence of such a function $\psi_0$ follows from \eqref{prop2-vk} and the fact that $\lim_{|x|\to\infty} v_0(x,0)=0$.

Now we claim that, for each $k\in\N$,
\begin{equation}\label{psi-geq-vk}
v_k(x,t)\leq  \psi(x,t+kT)\,\,\hbox{ for } x\in\R,\,t\in[0,T].
\end{equation}
Indeed, applying the comparison principle to equations \eqref{equation-vk} and \eqref{supequation-psi}, we immediately obtain \eqref{psi-geq-vk} in the case of $k=0$. This in particular implies $v_0(x,T)\leq \psi(x,T)$ for $x\in\R$. Moreover, since $\psi_0$ satisfies \eqref{initial-condi1-psi0}, a simple comparison argument immediately gives
$$\tilde{q}(0)\leq \psi(x,t)\leq  p(0)+\epsilon \,\, \hbox{ for }  x\in\R,\,t\geq 0.$$
Since $h(t;a)$ is $T$-monotone nonincreasing for each $a\in (\tilde{q}(0),p(0)+\epsilon)$, it then follows from 
\eqref{prop3-vk} that 
$$v_1(x,0)=h(T,v_0(x,T)) \leq h(T, \psi(x,T))  \leq  \psi(x,T) \,\,\hbox{ for } x\in\R.$$
By the comparison principle applied to equations \eqref{equation-vk} and \eqref{supequation-psi} again, we obtain 
\eqref{psi-geq-vk} in the case of $k=1$. Then a standard induction argument implies \eqref{psi-geq-vk} holds for all $k\in\N$.

Next, we prove that 
\begin{equation}\label{tinfty-wto0}
\lim_{t\to\infty} \psi(x,t) =\tilde{q}(0) \,\,\hbox{ in } L^{\infty}(\R). 
\end{equation}
By the exponential transformation used the proof of Lemma \ref{estimate-trans-heat}, one easily checks that $\psi(x,t)$ has the following explicit expression
$$\psi(x,t)=\frac{1}{M}\ln \int_{-\infty}^{\infty} \exp\big(M\psi_0(y) \big)K(t,x-y) dy \,\,\hbox{ for } x\in\R,\,t\geq 0,$$
where $K$ is the fundamental solution of the heat equation. Since the function
$\exp\big(M\psi_0(x) \big) -\exp\big(M\tilde{q}(0)\big)$ has a compact support in $\R$ (due to \eqref{initial-condi2-psi0}), it is well known that 
$$\lim_{t\to\infty}\int_{-\infty}^{\infty}\Big[\exp\big(M\psi_0(x) \big) -\exp\big(M\tilde{q}(0) \big) \Big]K(t,x-y) dy =0  \,\,\hbox{ in } L^{\infty}(\R), 
$$
which immediately gives \eqref{tinfty-wto0}.

Finally, combining \eqref{psi-geq-vk} and \eqref{tinfty-wto0}, we obtain
$$\limsup_{k\to\infty}v_k(x,0)\leq \tilde{q}(0)  \,\,\hbox{ for all } x\in\R.$$
On the other hand, it is easily seen from \eqref{ukj-qlambda0} and \eqref{change-variable-u-vk} that
\begin{equation*}
v_k(x,0)\to p(0) \,\,\hbox{ as } k\to\infty \hbox{  in } L_{loc}^{\infty}(\R), 
\end{equation*}
which is a contradiction with $\tilde{q}(0)<p(0)$. Therefore, if $p\in \tilde{\omega}(u)$, then the condition (ii) does not hold.  The proof of Theorem \ref{precise-omega-limit} is thus complete. 
\end{proof}

\vskip 5pt

Next we give the proof of Propositions \ref{bistable-theorem} and \ref{combustion-theorem}. 

\begin{proof}[Proof of Proposition \ref{bistable-theorem}]
In this bistable case, it is clear that $q_1$ is unstable from both above and below with respect to \eqref{odeinitial}. And hence, as an easy application of Proposition \ref{nonexistence-result}, there is no symmetrically decreasing $T$-periodic solution of $\eqref{ground}$ based at $q_{1}$. Note that there is also no such solution based on $p_1$. Otherwise, by Corollary \ref{coro-intermediate}, there would exist an element of $\mathcal{X}_{per}$ above $p_1$, which is a contradiction with our assumption. 
Then, under the additional assumption \eqref{linearly-stable}, it follows from Theorem \ref{theorem-non-degenerate} that $u(x,t+mT)$ converges to a $T$-periodic solution of \eqref{ground} as $m\to\infty$, and the limit can only be:
$$0,\quad U(x,t),\quad q_1(t)\quad \hbox{or}\quad p_1(t).$$
Furthermore, since $q_1$ is unstable from below with respect to \eqref{odeinitial}, Theorem  \ref{precise-omega-limit} implies $q_1\notin \tilde{\omega}(u)$. The proof of Proposition \ref{bistable-theorem} is thus complete.
\end{proof}

\vskip 5pt

\begin{proof}[Proof of Proposition \ref{combustion-theorem}]
In this combustion case, $T$-periodic solutions of \eqref{ODE} are the following:
$$ q_{\lambda}(t)\, \hbox{ for } \,0\leq\lambda\leq 1  \quad\hbox{and}\quad   p_1(t) .$$
It follows directly from Proposition \ref{nonexistence-result} and Corollary \ref{coro-intermediate} that there are no symmetrically decreasing solutions of $\eqref{ground}$ based at the above spatially homogeneous solutions. Then, by Corollary \ref{coro-homo}, $u(x,t+mT)$ converges to one of these $T$-periodic functions as $m\to\infty$. Moreover, 
Theorem \ref{precise-omega-limit} implies $q_{\lambda}(t)$ for $0< \lambda< 1$ can never belong to $\tilde{\omega}(u)$. Therefore, we obtain
$$
\tilde{\omega}(u)= \big\{0\big\},\,\,\big\{q_1\big\} \,\hbox{ or }\, \big\{ p_1\big\}.
 $$
 The proof of Proposition \ref{combustion-theorem} is complete.
\end{proof}

\SE*{Acknowledgements}
The authors would like to thank Prof Peter Pol{\'a}{\v c}ik for many helpful discussions.


\end{document}